\let\emptyset \undefined
\definecolor{darkred}{rgb}{0.9,0.1,0.1}
\definecolor{darkblue}{rgb}{0,0,0.7}
\definecolor{darkgreen}{rgb}{0,0.5,0}
\def\refcomment#1{}
\newcommand{\rf}[1]{{#1}}
\theoremstyle{plain}
\newtheorem{theorem}{Theorem}[section]
\newtheorem{mtheorem}[theorem]{Meta-Theorem}
\newtheorem{lemma}[theorem]{Lemma}
\newtheorem{definition}[theorem]{Definition}
\theoremstyle{remark}
\newtheorem{remark}[theorem]{Remark}
\newtheorem{example}[theorem]{Example}
\numberwithin{equation}{section}
\newcommand{\E}{\mathbb{E}}
\renewcommand{\P}{{\mathbb P}}
\newcommand{\T}{\symb{T}}
\newcommand{\KK}{\mathfrak{K}}
\newcommand{\Cc}{\mathcal{C}}
\newcommand{\Cs}{\mathcal{C}_{\mathfrak{s}}}
\newcommand{\Ff}{\mathcal{F}}
\newcommand{\al}{\alpha}
\newcommand{\be}{\beta}
\newcommand{\ga}{\gamma}
\newcommand{\de}{\delta}
\newcommand{\ka}{\kappa}
\newcommand{\la}{\lambda}
\newcommand{\si}{\sigma}
\newcommand{\s}{\mathfrak{s}}
\newcommand{\ls}{\lesssim}
\renewcommand{\subset}{\subseteq}
\newcommand{\ag}{\al}
\newcommand{\bg}{\be}
\newcommand{\dg}{\de}
\def\CD{\mathcal{D}}
\def\CS{\mathcal{S}}
\def\CW{\mathcal{W}}
\def\CB{\mathcal{B}}
\def\one{\mathbf{1}}
\def\TT{\mathcal{T}}
\def\drawx{\draw[-,solid] (-3pt,-3pt) -- (3pt,3pt);\draw[-,solid] (-3pt,3pt) -- (3pt,-3pt);}
\tikzset{
	root/.style={circle,fill=testcolor,inner sep=0pt, minimum size=2mm},
	%dot/.style={circle,fill=black,inner sep=0pt, minimum size=0.5mm},		
	dot/.style={circle,fill=black,draw=black, solid,inner sep=0pt,minimum size=0.5mm},
	square/.style={rectangle,fill=black,draw=black, solid,inner sep=0pt,minimum size=1mm},
	empty/.style={circle,fill=white,draw=white, solid,inner sep=0pt,minimum size=0.5mm},
	var/.style={circle,fill=black!10,draw=black,inner sep=0pt, minimum size=
	2mm},
	symb/.style={circle,fill=symbols,draw=symbols, solid,inner sep=0pt,minimum size=0.5mm},
	yy/.style={circle,fill=gray!20,draw=black,inner sep=0pt,minimum size=0.8mm},
	>=stealth,
	dotred/.style={circle,fill=black!50,inner sep=0pt, minimum size=2mm},
	generic/.style={semithick,shorten >=1pt,shorten <=1pt},
	dist/.style={ultra thick,draw=testcolor,shorten >=1pt,shorten <=1pt},
	testfcn/.style={ultra thick,testcolor,shorten >=1pt,shorten <=1pt,<-},
	testfcnx/.style={ultra thick,testcolor,shorten >=1pt,shorten <=1pt,<-,
		postaction={decorate,decoration={markings,mark=at position 0.6 with {\drawx}}}},
	kprime/.style={semithick,shorten >=1pt,shorten <=1pt,densely dashed,->},
	kprimex/.style={semithick,shorten >=1pt,shorten <=1pt,densely dashed,->,
		postaction={decorate,decoration={markings,mark=at position 0.4 with {\drawx}}}},
	kernel/.style={semithick,shorten >=1pt,shorten <=1pt,->},
	multx/.style={shorten >=1pt,shorten <=1pt,
		postaction={decorate,decoration={markings,mark=at position 0.5 with {\drawx}}}},
	kernelx/.style={semithick,shorten >=1pt,shorten <=1pt,->,
		postaction={decorate,decoration={markings,mark=at position 0.4 with {\drawx}}}},
	kernel1/.style={->,semithick,shorten >=1pt,shorten <=1pt,postaction={decorate,decoration={markings,mark=at position 0.45 with {\draw[-] (0,-0.1) -- (0,0.1);}}}},
	kernel2/.style={->,semithick,shorten >=1pt,shorten <=1pt,postaction={decorate,decoration={markings,mark=at position 0.45 with {\draw[-] (0.05,-0.1) -- (0.05,0.1);\draw[-] (-0.05,-0.1) -- (-0.05,0.1);}}}},
	kernelBig/.style={semithick,shorten >=1pt,shorten <=1pt,decorate, decoration={zigzag,amplitude=1.5pt,segment length = 3pt,pre length=2pt,post length=2pt}},
	rho/.style={dotted,semithick,shorten >=1pt,shorten <=1pt},
	renorm/.style={shape=circle,fill=white,inner sep=1pt},
	labl/.style={shape=rectangle,fill=white,inner sep=1pt},
	xi/.style={circle,fill=symbols!10,draw=symbols,inner sep=0pt,minimum size=1.2mm},
	xix/.style={crosscircle,fill=symbols!10,draw=symbols,inner sep=0pt,minimum size=1.2mm},
	xib/.style={circle,fill=symbols!10,draw=symbols,inner sep=0pt,minimum size=1.6mm},
	xibx/.style={crosscircle,fill=symbols!10,draw=symbols,inner sep=0pt,minimum size=1.6mm},
	not/.style={circle,fill=symbols,draw=symbols,inner sep=0pt,minimum size=0.5mm},
	>=stealth,
	}
\colorlet{symbols}{blue!90!black}
\def\symbol#1{\textcolor{symbols}{#1}}
\def\DeclareSymbol#1#2#3{\expandafter\gdef\csname MH@symb@#1\endcsname{\tikz[baseline=#2,scale=0.15]{#3}}%
\expandafter\gdef\csname MH@symb@#1s\endcsname{\scalebox{0.6}{\tikz[baseline=#2,scale=0.15]{#3}}}}
\def\<#1>{\csname MH@symb@#1\endcsname}
\renewcommand{\star}{\ast}                              % Convolution 
\newcommand{\Xk}{\symbol{\mathbf{X}^k}}
\newcommand{\Xj}{\symbol{\mathbf{X}^j}}
\newcommand{\KKg}{\mathcal{K}_\gamma}
\newcommand{\Ng}{\mathcal{N}_\gamma}
\renewcommand{\eg}{\eps}                                  
\newcommand{\hg}{h_{\ga}}                                  %Rescaled field 
\newcommand{\kg}{\ka_\ga}                                  % Interaction kernel
\newcommand{\Kg}{K_\ga}                                    % Interaction kernel in macroscopic coordinates
\newcommand{\LN}{\Lambda_N}                         % discrete space microscopic coordinates
\newcommand{\Le}{\Lambda_\eps}                     % discrete space macroscopic coordinates
\newcommand{\SN}{\Sigma_N}                            % Spin configurations
\newcommand{\Hg}{\mathscr{H}_{\ga}}               % Hamiltonian
\newcommand{\lbg}{\lambda_{\ga}}                     % Gibbs measure
\newcommand{\Zbg}{\mathscr{Z}_{\ga}}               % Partition function
\newcommand{\LgN}{\mathscr{L}_{\ga}}               % Generator
\newcommand{\cg}{c_{\ga}}                                     % Jump rate microscopic coordinates
\newcommand{\Cg}{C_\ga}                                     % Jump rate macroscopic coordinates
\newcommand{\Mg}{M_\ga}                                     % noise in macroscopic coordinates
\renewcommand{\ae}{\ast}                              % Convolution on macroscopic discrete torus
\newcommand{\Eg}{E_\ga}                                       %Error term appearing in the equation due to expansion of tanh
\newcommand{\Xg}{X_{\ga}}                                     %The rescaled field - our main object
\newcommand{\ct}{c_{\ga}}                                     %Constant close to 1, due to approximation of integral not exactly =1 
\begin{document}
\begin{abstract}
These lecture notes grew out of a series of lectures given by the second named author in short courses in Toulouse, Matsumoto, and Darmstadt. 
The main aim is to explain some aspects of the theory of ``Regularity structures'' developed recently by Hairer in \cite{RegStr}. This theory gives a way to study well-posedness for a class of stochastic PDEs that could not be treated previously. Prominent examples include the KPZ equation as well as the dynamic $\Phi^4_3$ model.  

Such equations can be expanded into formal perturbative expansions. Roughly speaking the theory of regularity structures provides a way to truncate this expansion after finitely many terms and to solve a fixed point problem for the ``remainder''. The key ingredient is a new notion of ``regularity'' which is based on the terms of this expansion.\\ 

\noindent\textsc{R\'esum\'e.} Ces notes sont bas\'ees sur trois cours que le deuxi\`eme auteur a donn\'es \`a Toulouse, Matsumoto et Darmstadt. 
L'objectif principal est d'expliquer certains aspects de la th\'eorie des ``structures de r\'egularit\'e'' d\'evelopp\'ee r\'ecemment
par Hairer \cite{RegStr}. Cette th\'eorie permet de montrer que certaines EDP
stochastiques, qui ne pouvaient pas \^etre trait\'ees auparavant,
sont bien pos\'ees. Parmi les exemples se trouvent l'\'equation KPZ et le mod\`ele $\Phi^4_3$ dynamique.

Telles \'equations  peuvent \^etre d\'evelopp\'ees en s\'eries perturbatives formelles.
La th\'eorie des structures de r\'egularit\'e
permet de tronquer ce d\'eveloppement apr\'es un nombre fini de termes, et de r\'esoudre un probl\`eme de point fixe  pour le reste. 
L'id\'ee principale est une nouvelle notion de r\'egularit\'e des distributions, qui d\'epend  des termes de ce d\'eveloppement. 
\end{abstract}

\title[SPDE]
{Stochastic PDEs, Regularity structures, and interacting particle systems}

\author{Ajay Chandra}
\address{Ajay Chandra, University of Warwick
}
\email{ajay.chandra@warwick.ac.uk}

\author{Hendrik Weber}
\address{Hendrik Weber, University of Warwick
}
\email{hendrik.weber@warwick.ac.uk}

\keywords{}

\subjclass[2000]{}

% \begin{abstract}

% \end{abstract}

%\dedicatory{}

\date\today

\maketitle

%\accomment{The referee mentioned 22 "major" comments/typos. Items 16, 17, 18, 20, 21, and 22 were already fixed. I don't understand Item 11 and I disagree with item 4 and item 19. In places with verbose changes or disagreement I put in a comment REF\#, this is done for items 1,2,4,19.} \hwcomment{For 11 - Martin already pointed out to us that in the previous version we had used to different fonts  $\mathcal{T}$ and $\mathscr{T}$. I already unified this, so probably it is already fixed} 
%%%%%%%%%%%%%%%%%%%%%%%%
\section{Lecture 1}
%%%%%%%%%%%%%%%%%%%%%%%%
\label{s:L1}
In this introductory lecture we outline the scope of the theory of regularity structures. We start by discussing two important stochastic PDE (SPDE) coming from physics. The first is the Kardar-Parisi-Zhang (KPZ) equation which is \emph{formally} given by

\begin{align}\label{e:KPZ}
\partial_t h(t,x) = \partial_x^2 h(t,x) + \frac{1}{2} (\partial_x h(t,x))^2 + \xi(t,x) \;.\tag{KPZ}
\end{align}
We will restrict ourselves to the case where the spatial variable $x$ takes values in a one dimensional space. The term $\xi(t,x)$ denotes space-time white noise which is not an actual function but  a quite irregular random (Schwartz) distribution.
This equation was introduced in \cite{KPZ} in 1986 and is a model for the fluctuations of an evolving one dimensional interface which separates two competing phases of a physical system. An explanation for the presence of the individual terms on the right hand side of \eqref{e:KPZ} can be found in \cite{QuasSpo15}.
The KPZ equation has recieved a lot of attention from mathematicians in recent years: One major development was an exact formula for the one point distribution of solutions to \eqref{e:KPZ} which was found independently by \cite{spohn} and \cite{ACQ}. This formula is susceptible to asymptotic analysis which reveals that the scaled one-point distributions converge to the Tracy-Widom distribution, a result that has been spectacularly confirmed by physical experiments  \cite{TS10}. 

\medskip
Throughout these lectures, we will focus more on our second example, the dynamic $\Phi^4_d$ model. Formally this model is given by 
\begin{align}\label{e:AC}
\partial_t \phi(t,x) = \Delta \phi(t,x) - \phi^3(t,x) - m^{2} \phi(t,x) + \xi(t,x) \;.\tag{$ \Phi^4_d$}
\end{align}
Here the spatial variable $x$ takes values in a $d$-dimensional space and $\xi$ is again space-time white noise.\
The invariant measure of \eqref{e:AC} was studied intensively in the seventies in the context of Constructive Quantum Field Theory (see e.g. \cite{glimm1974wightman, feldman1974lambdaphi, gjbook, brydges}). Formally this invariant measure is given by
\begin{equation}\label{e:pfd}
\mu(d\phi) \propto \exp\left[- 2\int_{\R^{d}} \frac{1}{4}
\phi^4(x) 
+ \frac{1}{2} 
m \phi^2 \, dx \right] \, \nu(d\phi)
\end{equation}
where $\nu$ is the law of Gaussian Free Field (GFF). The GFF can be thought of as a Gaussian random field on $\phi:\R^{d} \rightarrow \R$ with covariance given by $\mathbf{E}_{\nu}[\phi(x)\phi(y)] = \frac12 G(x-y)$ where $G$ is  the Green's function of the $d$-dimensional Laplacian. However when $d > 1$ the measure $\nu$ is not supported on a space of functions so $\phi$ must  actually be a distribution. A rigorous understanding of \eqref{e:pfd} then requires interpreting nonlinearities of distributions.

\medskip
In addition to being a toy model for QFT the measure \eqref{e:pfd} can be seen as a continuum analog of the famous ferromagnetic Ising model. For example, in \cite{GlimmPhase} the authors showed that the concrete measure corresponding to \eqref{e:pfd} in $d=2$ has a phase transition; their proof is a sophisticated version of the classical Peierls argument \cite{peierls} for the Ising model. We will close the first lecture by describing how the equation \eqref{e:AC} can be obtained as the scaling limit of a dynamical Ising model with long range interaction (at least for $d=1,2$).

\medskip

An important remark is that the theory of regularity structures will be restricted to studying \eqref{e:AC} in the regime $d < 4$ and \eqref{e:KPZ} for space dimension $d < 2$. These are both manifestations of a fundamental restriction of the theory which is the assumption of \emph{subcriticality} which will be discussed later.
Another important remark about the scope of the theory is that  regularity structures deliver a robust mathematical theory for making sense of \eqref{e:KPZ} and \eqref{e:AC} on compact space-time domains  and describe their solutions on \emph{very small scales}. The large scale behaviour of these solutions is mostly out of the current theory's scope (although some results have been obtained, see e.g. \cite{Cyril, JCH2}). This is quite important since it is primarily the large scale behaviour of solutions which makes the equations \eqref{e:KPZ} and \eqref{e:AC} experimentally validated models of physical phenomena - in particular the macroscopic behaviour of \emph{critical} systems.
However, understanding small scale behaviour and proving well-posedness is a fundamental step towards having a complete theory for these SPDE
\footnote{There are also some physical phenomena appearing in the scale regimes that regularity structures can access, such as \emph{near}-critical systems at large volume.}.
As mentioned earlier, a large obstacle we must overcome is that the $\nabla h$ of \eqref{e:KPZ} and $\phi$ of \eqref{e:AC} will in general be distributions. This makes interpreting the nonlinearities appearing in these equations highly non-trivial.

\medskip
{\bf Acknowledgements:} AC was supported by the Leverhulme trust. HW was supported by an EPSRC First grant.  We thank Martin Hairer for teaching us a lot about this subject, for giving us helpful feedback on a draft of these notes, and for helping us describe a beautiful theory with beautiful \LaTeX -macros. We also thank Cyril Labb\'e and the referee for a careful reading  and providing many comments.

\subsection{Random Distributions and Scaling Behaviour}\label{sec:scaling}

\subsubsection{Space-time white noise}\label{ss:whiteNoise}
We start by defining space-time white noise $\xi$ which appeared in both \eqref{e:KPZ} and \eqref{e:AC}. Formally $\xi(t,x)$ is a random Gaussian function on $\R \times \R^{d}$, its covariance is given by
\begin{equation}\label{e:White_noise_covarianceformal}
\E \left[ \xi(t,x) \xi(t^\prime,x^\prime) \right]= \delta(t-t^\prime) \, \delta^d(x - x^\prime) \;,
\end{equation}
where $\delta^d$ denotes the $d$-dimensional Dirac $\delta$ distribution. However for any fixed $(t,x)$ one cannot rigorously interpret $\xi(t,x)$ as a random variable, there is no coordinate process. Instead $\xi$ must be interpreted as a random distribution, a random element of $\mathcal{S}^{\prime}(\R \times \R^d)$ whose law is centered Gaussian. For any $f \in \mathcal{S}^{\prime}(\R \times \R^d)$ and smooth test function $\eta$ on $\R \times \R^d$  we denote by $(f,\eta)$ the corresponding duality pairing. The quantity $(\xi,\bullet)$ is then the analog of the coordinate process for $\xi$ and the rigorous version of \eqref{e:White_noise_covarianceformal} is given by
\begin{equation}\label{e:White_noise_covariance}
\E \left[(\xi,\eta_{1}) (\xi,\eta_{2}) \right] = \int_{\R \times \R^d} 
\eta_{1}(t,x)\eta_{2}(t,x)\, dt \; dx \;
\textnormal{ for any smooth }\eta_{1},\eta_{2}.
\end{equation}
\begin{remark}\label{BMwhitenoise}
The formula \eqref{e:White_noise_covariance} implies that $(\xi,\bullet)$ can be extended beyond smooth functions to an isometry from $L^2(\R \times \R^d)$ to $L^2(\Omega, \Ff,\P)$ where $(\Omega, \Ff,\P)$ is the underlying probability space. Adapting the definition to the case of $\R$ instead of $\R \times \R^{d}$ gives us the process called white noise, in this case one has 
\begin{align*}
\E \big[ (\xi, \mathbf{1}_{[0,s]}) (\xi, \mathbf{1}_{[0,t]}) \big] = \int_{\R} \mathbf{1}_{[0,s]}(r) \; \mathbf{1}_{[0,t]}(r) \, dr = s \wedge t \;,
\end{align*}
so $(\xi, \mathbf{1}_{[0,t]})\; ``=\; \int_{0}^{t}\xi(r)dr"$ is a Brownian motion and we see that $\xi$ can be thought of as the derivative of Brownian motion. In these lectures we will focus on equations driven by space-time noise processes so we will use the term white noise to refer to space-time white noise.
\end{remark}
\medskip
We will frequently be interested in the scaling behaviour of space-time distributions. Given a white noise $\xi$ and positive parameters $\tau, \lambda > 0$ we can \emph{define} a new random distribution $\xi_{\tau,\lambda}$ via
\begin{align*}
(\xi_{\tau, \lambda}, \eta ) := (\xi, \CS^{\tau,\lambda}\eta ) \,
\end{align*} 
where for any smooth function $\eta$ we have set $
(\CS^{\tau,\lambda}\eta)(t,x) 
:= 
\tau^{-1} \lambda^{-d} 
\eta(\tau^{-1} t,\lambda^{-1}x)$. This is a simple rescaling operation, if $\xi$ was an actual function then this would amount to setting $\xi_{\tau,\lambda}(t,x) = \xi(\tau t, \lambda x)$.
By \eqref{e:White_noise_covariance} one has
\begin{align}
\E  \left[ ( \xi_{\tau, \lambda}, \eta )^2 \right] &= \int_{\R \times \R^d}  \tau^{-2} \lambda^{-2d}  \eta(\tau^{-1}t , \lambda^{-1} x)^2 \, dt \, dx \notag\\
&= \tau^{-1}  \lambda^{-d}  \int_{\R \times \R^d} \eta(t,x)^{2}\, dt \; dx\;. \label{e:white_noise_scaling}
\end{align}
Since $\xi$ and $\xi_{\tau,\lambda}$ are centred Gaussian processes we can conclude that $\xi$ is scale invariant in distribution, in particular $ \xi_{\tau, \lambda} \overset{\text{law}}{=} \tau^{-\frac{1}{2}}\lambda^{-\frac{d}{2}} \xi$.

\subsubsection{Scaling Behaviour for SPDEs and Subcriticality}
Both \eqref{e:KPZ} and \eqref{e:AC} are non-linear perturbations of a linear SPDE called the stochastic heat equation (SHE)
\begin{equation}\label{e:SHE}
\partial_t Z(t,x) = \Delta Z(t,x) + \xi(t,x) \tag{SHE} \;
\end{equation}
where as before $(t,x) \in \R \times \R^{d}$. As before, $\xi$ cannot be evaluated pointwise and \eqref{e:SHE} has to be interpreted in the distributional sense.  Since \eqref{e:SHE} is linear it follows that the solution $Z$ will be Gaussian (for deterministic or Gaussian initial conditions).
\begin{remark}
The equation \eqref{e:SHE} is sometimes called the \emph{additive} stochastic heat equation in order to distinguish it from the \emph{multiplicative} stochastic heat equation which is given by
\begin{equation*}
\partial_t Z(t,x) = \Delta Z(t,x) +  Z(t,x) \,\xi(t,x)\;.
\end{equation*}
The above equation has a close relationship to \eqref{e:KPZ} via a change of variables called the Cole-Hopf transformation. However we will not use this transformation nor investigate the multiplicative SHE in these notes. Whenever we refer to the stochastic heat equation we are always refering \eqref{e:SHE}.
\end{remark}

\medskip
We now perform some formal computations to investigate the scaling behaviour of solutions \eqref{e:SHE}. For $\lambda > 0$ and suitable scaling exponents $\al, \be,\ga \in \R$ we define $\hat{Z}(t,x) := \la^{\al}Z(\la^{\be} t, \la^\ga x)$ and $\hat{\xi} := \la^{ \frac{\be}{2}} \la^{ \frac{d\ga }{2}} \xi_{\la^\be, \la^\ga }$, it then follows that
\begin{equation}
\partial_t \hat{Z} =  \la^{\be - 2 \ga} \Delta \hat{Z} + \la^{\al + \frac{\be}{2} - \frac{d\ga}{2}} \hat{\xi} \;.
\end{equation}
We have already shown that $\hat{\xi} \overset{\text{law}}{=} \xi$. Therefore, if we set
\begin{equation}\label{e:scalinG}
\al = \frac{d}{2}-1 \;,  \qquad    \be = 2\;,   \qquad \text{and} \qquad \ga =1 \;
\end{equation}
then we see that $\hat{Z} \overset{\text{law}}{=} Z$ (ignoring boundary conditions) so the solution to \eqref{e:SHE} is also scale invariant.

\medskip
In general non-linear equations like \eqref{e:KPZ} and \eqref{e:AC} will not be scale invariant.  If one rescales these equations according to the exponents given in \eqref{e:scalinG} then the non-linearity will be multiplied by a prefactor which is some power of $\lambda$; the assumption of subcriticality then requires that this prefactor vanish as $\lambda \rightarrow 0$. Roughly speaking, this condition enforces that the solutions to \eqref{e:KPZ} and \eqref{e:AC} both behave like the solution to the \eqref{e:SHE} \emph{on small scales}. 
Let us illustrate this for \eqref{e:KPZ}. We perform the same scaling as in \eqref{e:scalinG} and set $\hat{h}(t,x) = \la^{-\frac{1}{2}}h(\la^2 t, \la x)$. This gives
\begin{align*}
\partial_t \hat{h} = \partial_x^2 \hat{h} + \frac{\la^{\frac12} }{2} (\partial_x \hat{h})^2 + \hat{\xi}\;.
\end{align*}
On small scales, i.e. for $\la \to 0$, the prefactor $\la^{\frac12}$ of the non-linear term goes to zero. 
We perform the same calculation for \eqref{e:AC}, for this discussion the mass term $m^2\phi$ is irrelevant so we drop it. Applying the scaling \eqref{e:scalinG}, i.e. setting $\hat{\phi}(t,x) = \la^{\frac{d}{2}-1} \phi( \la^2t,  \la x)$ we get
\begin{align*}
\partial_t \hat{\phi}(t,x) = \Delta \hat{\phi} (t,x) -  \la^{4-d}\hat{\phi}^3  + \hat{\xi} \;.
\end{align*}
If the spatial dimension $d$ is strictly less than $4$ the prefactor $\la^{4-d}$ vanishes in the limit $\la \to 0$. We call $d < 4$ the \emph{subcritical
} regime. If $d=4$ the prefactor $\la^{4-d} = 1$; this is the \emph{critical} regime. The regime $d\geq5$ is called the \emph{supercritical} regime.  

\medskip
We now state a crude ``definition" of subcriticality which will be sufficient for these notes. The interested reader is referred to \cite[Assumption 8.3]{RegStr} for a more precise definition of subcriticality which also extends to different types of noise $\xi$ and a \rf{larger class of regularising linear operators}.
%\footnote{The framework here is not restricted to just parabolic PDE. If one writes the underlying homogenous linear equation for a PDE as $\mathcal{L}u = 0$ then the key property we ask for is that $\mathcal{L}^{-1}$ be representable as a regularizing integral operator.} PDE.
\refcomment{REF1}
\begin{definition}\label{e:subcritical}
Consider the equation 
\begin{equation}\label{e:general_equation}
\partial_t u = \Delta u + F(u, \nabla u) + \xi \;, 
\end{equation}
in $d$ spatial dimensions. Equation \eqref{e:general_equation} is called \emph{subcritical} if under the scaling \eqref{e:scalinG}  the non-linear term $F(u, \nabla u)$ gets transformed into a term $F_\lambda(u, \nabla u)$ which formally goes to zero as $\la$ goes to $0$.
\end{definition}
The main result of \cite{RegStr} can roughly be paraphrased as follows.
 \begin{mtheorem}[\cite{RegStr}]\label{metatheorem}
 Assume that SPDE \eqref{e:general_equation} is subcritical.
We assume that $x$ takes values in a compact subset of $\R^d$ with some boundary conditions. Furthermore, we prescribe an initial condition $u_0$ which has the same spatial regularity as we expect for the solution $u$. 

There is a natural notion of solution to \eqref{e:general_equation} and such solutions exist and are unique on a time interval $[0,T)$ for some random $T>0$.
 \end{mtheorem}

\begin{remark}
The assumption of subcriticality is not just a technical restriction. For example it has been proven that a non-trivial $\Phi^4_{d}$ cannot exist for $d \ge 5$ (this result extends to $d \ge 4$ with some caveats) \cite{Aizenman},\cite{FrohTriv}. 
\end{remark}
\begin{remark}
We will see below that the statement of Metatheorem~\ref{metatheorem} really consists of two independent statements: (i) For subcritical equations it is possible to build the algebraic and analytic structure that allows to formulate the equation and (ii) all the stochastic processes entering the expansion converge (after renormalisation). It is an astonishing fact that in the case of equations driven by white noise, the scaling conditions for these two statements to hold coincide. It is however possible to define a notion of subcriticality for more general equations driven by a more general noise term. This generalised notion still implies that it is possible to build the algebraic and analytic structure, but there are examples, where stochastic terms fail to be renormalisable. \footnote{This may be familiar to readers who know the theory of rough paths: In principle this theory allows to solve differential equations with a driving noise $dW$ for $W$ of arbitrary positive regularity by increasing the number of iterated integrals one considers. However, the  stochastic calculations needed to actually construct the iterated integrals fail for
 fractional Brownian motion of Hurst index $H<\frac14$  \cite{laure}.}
\end{remark}

\begin{remark}
For what follows we restrict ourselves to the subcritical regime. While the equations \eqref{e:KPZ} and \eqref{e:AC} are not scale invariant themselves they do interpolate between two different scale invariant space-time processes, one governing small scales and another governing large scales. As mentioned before the small scale behaviour should be governed by the solution to \eqref{e:SHE}. At large scales it is expected that (i) one must use different exponents then \eqref{e:scalinG} to get a meaningful limit and (ii) the limiting object will be a non-Gaussian process scale invariant under these different exponents. 

For \eqref{e:KPZ} one should apply the famous $1,2,3$-scaling $\hat{h}(t,x) = \la^{-\frac12} h(\la^{\frac{3}{2}} t , \la x) $. Then, setting $\hat{\xi} = \la  \xi_{\la^\frac{3}{2}, \la } $ one has the equation
\begin{equation}\label{e:KPZlargescale}
\partial_t \hat{h}(t,x) =  \frac{1 }{2} (\partial_x \hat{h})^2 + \la^{-\frac12} \partial_x^2 \hat{h} + \la^{-\frac14} \hat{\xi}(t,x) \,.
\end{equation}
Modulo the subtraction of a drift term, as $\la \rightarrow \infty$ the solution of \eqref{e:KPZlargescale} is conjectured to converge to an object called the \emph{KPZ fixed point} (see \cite{JeremyIvan}). This limiting object is not yet very well understood\footnote{In \cite{JeremyIvan} it was shown that this object does \emph{not} coincide with the entropy solution of the Hamilton-Jacobi equation $\partial_t h = \frac12 (\partial_x h)^2$.}. 

The behaviour of \eqref{e:AC} at large scales is also of interest, but much less is known in this case.
\end{remark}
\begin{remark}
The main aim of these lectures is to show how the theory of regularity structures can be used to construct local-in-time solutions for $\Phi^4_3$. Let us point out however, that after this result was first published by Hairer in \cite{RegStr}, two alternative methods to obtain similar results have been put forward: In \cite{Massimiliano1}  Gubinelli, Imkeller and Perkowski developed  the method of ``paracontrolled distributions"  to construct solutions to singular stochastic PDEs and this method was used in   \cite{Massimiliano2} to construct local in time solutions to $\Phi^4_3$. Independently, in \cite{Kupi} Kupiainen proposed yet another method based on Wilsonian renormalization group analysis. 
The result for $\Phi^4_3$ that can be obtained by the method of ``paracontrolled distributions" is essentially equivalent to the result obtained in the framework of regularity structures and arguably this method is simpler because less machinery needs to be developed. However, the construction of a more comprehensive theory pays off when looking at more complicated models. For example, approximation results for the multiplicative stochastic heat equation such as obtained in \cite{HairerPardoux} seem out of reach of the method of ``paracontrolled distributions" for the moment.
\end{remark} 

\begin{remark}
At the time of writing these lectures there were at least three other works (\cite{hairer2014singular}, \cite{hairer2015introduction}, and \cite[Chapters 13-15]{friz2014}) that survey the theory of regularity structures. In particular, \cite{hairer2015introduction} gives a much more detailed exposition for many of topics we only briefly discuss in Lecture 4. The authors' goal for the present work was (i) to clarify certain central concepts via numerous concrete computations with simple examples and (ii) to give a panoramic view of how the various parts of the theory of regularity structure work together. 
\end{remark}

\subsection{The need for renormalisation}

We must clarify what is meant by \emph{solution theory} and \emph{uniqueness} in the \emph{Metatheorem}~\ref{metatheorem}. \emph{Classical} solution theories for SPDEs (see e.g. \cite{dPZ,Hai09,PR}) do not apply here since the solutions are too irregular. For \eqref{e:KPZ} the solution $h(t,x)$ has the regularity of a Brownian motion in space - the mapping $x \mapsto h(t,x)$ for fixed $t$ is almost surely $\alpha$-H\"older continuous for every $\alpha <\frac12$ but not for any $\alpha \geq \frac12$. Remembering Remark \ref{BMwhitenoise} we expect that the distributional derivative $\partial_x h$ has the regularity of spatial white noise.
For \eqref{e:AC} the solution theory was already fairly understood only in $d=1$ - there $\phi$ is $\alpha$-H\"older for every $\alpha < \frac12$ which is largely sufficient to define $\phi^3$ (see \cite{FunakiString}). In the cases $d=2,3$  the subcriticality assumption stated in Definition~\ref{e:subcritical} still applies but $\phi$ will not be regular enough to be a function.

\medskip
A natural way to try to interpret nonlinear expressions involving highly irregular objects is regularization. In the context of our singular SPDE this means that if we show that solutions of \emph{regularized} equations converge to some object as we remove the regularization then we can define this limiting object as the solution of the SPDE.
Unfortunately this naive approach does not work, the solutions to the regularized equations will either fail to converge or converge to an uninteresting limit. We use the dynamic $\Phi^4_2$ model as a concrete example of this. One natural regularization consists of replacing $\xi$ by a smoothened  noise process. Let $\rho$ be a smooth function on $\R \times \R^{d}$ which integrates to $1$. For $\delta >0$ we set 
\begin{equation}\label{e:eta_delta}
\rho_\delta(t,x) :=\delta^{- (2 + d) }   \rho(\delta^{-2} \, t, \delta^{-1 } \,x )  \;.
\end{equation}
We use the parabolic scaling $\delta^{-2}t$ and $\delta^{-1}x$ since it will be a convenient choice for later examples.
For any $\delta > 0$ we define the regularized noise $\xi_\delta  := \xi \star \rho_\delta$ where $\star$ indicates space-time convolution. For any fixed positive $\delta$ proving (local) existence and uniqueness for the solution of 
\begin{align}\label{e:naive-approximation}
\partial_t \phi_\delta = \Delta \phi_\delta - \phi_\delta^3 + \xi_\delta \;
\end{align}
poses no problem in any dimension since the driving noise $\xi_{\delta}$ is smooth. However in \cite{HRW} this example was studied\footnote{Actually in \cite{HRW} a different regularisation of the noise is considered, but that does not change the result.} on the two dimensional torus and it was shown that as $\delta \downarrow 0$ the solutions $\phi_\delta$ converge to the trivial limit $0$ for any initial condition! 
In order to obtain a non-trivial limit the equation \eqref{e:naive-approximation} has to be modified in a $\delta$ dependent way. We will see that in dimensions $d=2,3$ if one considers
\begin{align}\label{e:less_naive1}
\partial_t \phi_\delta = \Delta \phi_\delta - (\phi_\delta^3 - 3 \mathfrak{c}_\delta \phi_\delta) + \xi_\delta \; ,
\end{align}
for a suitable dimension dependent choice of \emph{renormalisation} constants $\mathfrak{c}_\delta$, then the solutions $\phi_\delta$ do indeed converge to a non-trivial limit $\phi$. This constant $\mathfrak{c}_{\delta}$ will diverge as $\delta \downarrow 0$. For \eqref{e:less_naive1} in $d=2$ one can take $C_1 \log(\delta^{-1})$ for a specific constant $C_1$, while for $d=3$ one can take $\mathfrak{c}_\delta = C_1 \delta^{-1} + C_2 \log(\delta^{-1})$ for specific constants $C_{1},C_{2}$ where $C_{1}$ depends on the choice of $\rho$.
A similar \emph{renormalisation} procedure is necessary for the KPZ equation. In \cite{MartinKPZ} it was shown that solutions of 
\begin{align}\label{e:less_naive2}
\partial_t h_\delta(t,x) = \partial_x^2 h_\delta(t,x) + \frac{1}{2} (\partial_x h_\delta(t,x))^2  -\mathfrak{c}_\delta  + \xi_\delta(t,x) \;
\end{align}
on the one-dimensional torus converge to a non-trivial limit $h$ when one sets $\mathfrak{c}_\delta = C_1 \delta^{-1}$ for a specific constant $C_1$.
We call \eqref{e:less_naive1} and \eqref{e:less_naive2} \emph{renormalized equations} and the limits of their corresponding solutions $\phi:= \lim_{\delta \downarrow 0} \phi_{\delta}$ and $h := \lim_{\delta \downarrow 0} h_{\delta}$ are what we define to be solutions of \eqref{e:AC} and \eqref{e:KPZ} in Metatheorem \ref{metatheorem}; such solutions are often called \emph{renormalized solutions}.

\medskip
We now turn to discussing uniqueness for these SPDE. For a fixed subcritical equation one can choose different renormalization schemes which yield different families of renormalized equations and different corresponding renormalized solutions. A simple example of this in the cases of \eqref{e:less_naive1} or \eqref{e:less_naive2} would be shifting $\mathfrak{c}_\delta$ by a finite constant independent of $\delta$, this would change the final renormalized solution one arrives at. One could also change the renormalization scheme by using a different mollifier $\rho$ or use a non-parabolic scaling for a given mollifier.
Even with all these degrees of freedom in choosing a renormalization scheme it turns out that for a given subcritical equation the corresponding family of possible renormalized solutions will be parameterized by a finite dimensional space. If a renormalization scheme yields a non-trivial limit renormalized solution then this solution will lie in this family. For \eqref{e:KPZ} and \eqref{e:AC} the family of solutions is parameterized by a single real parameter.

\begin{remark}
The reader should compare the situation just described to the familiar problem one encounters when solving the stochastic differential equation
\begin{equation}\label{e:SDE1}
\dot{x}(t) = b(x(t)) + \sigma(x(t)) \xi(t)\,;
\end{equation}
(which is more conventially written as $dx_t = b(x_t) dt + \sigma(x_t) dW_t$). There it is well-known that different natural regularisations converge to different solutions. An explicit Euler scheme, for example, will lead to the solution in the \emph{It\^o} sense (see e.g. \cite{KloedenPlaten})  whereas smoothening the noise usually leads to the \emph{Stratonovich} solution (see the e.g. the classical papers \cite{WongZakai1}, \cite{WongZakai2}). There is a whole one-parameter family of meaningful solution-concepts to \eqref{e:SDE1} and the question for uniqueness is only meaningful once it is specified which particular solution one is looking for. 
\end{remark}

\medskip
Later in these lecture notes we will discuss how the theory of regularity structures gives a ``recipe'' for coming up with renormalization schemes which give non-trivial limits, we will also see that the limiting solution itself will be a fairly concrete object in the theory.

\subsection{Approximation of renormalised SPDE by  interacting particle systems} 
One might think that by introducing \eqref{e:less_naive1} and \eqref{e:less_naive2} we have turned our back on the original equations and physical phenomena they represent. This is not the case however. 
There is strong evidence, at least for KPZ and for $\Phi^4_d$, that the  \emph{renormalised} solutions are the physical solutions. For the KPZ equation subtracting a diverging constant corresponds simply to a change of reference frame. Furthermore, it was shown in \cite{BG97} that the solutions to KPZ arise as continuum limits for the weakly asymmetric simple exclusion process, a natural surface growth model. 

\medskip
We will now discuss how the dynamic $\Phi^4_d$ model can be obtained as a continuum limit of an Ising model with long range interaction near criticality. In the one dimensional case (where no renormalisation is necessary) this is a well known result \cite{BPRS,FR} and the right scaling relations for spatial dimensions $d=1,2,3$ were conjectured in \cite{GLP}. One of the interesting features of these scaling relations is that  the ``infinite" renormalisation constant has a natural interpretation as shift of the critical temperature. The two dimensional convergence result was established only recently in \cite{MW}. We will now briefly discuss this result and show how the relevant scaling relations  relate to the subcriticality assumption for \eqref{e:AC}.

\medskip

 For $N \geq 1$ let $\LN =\Z^d / (2N+1) \Z^d$ be the $d$-dimensional discrete torus. 
% which we identify with the set  $  \{-N, -(N-1), \ldots, N \}^d$. 
 Denote by $\SN = \{ -1, +1 \}^{\LN}$ the set of spin configurations on $\LN$. For a spin configuration $\si = (\si(k), \, k \in \Lambda_N )$ we define the \emph{Hamiltonian} as  
\begin{equation*}%\label{e:Hamiltonian}
\Hg(\si) := -  \frac12 \sum_{k, j \in \LN}  \kg(k-j) \si(j)   \si(k)  \;.
\end{equation*}
$\gamma \in (0,1) $ is a model parameter which determines the interaction range between spins. It enters the model through the definition of the interaction kernel $\kg$ which is given by 
\begin{equation*}%\label{e:samplek}
\kg(k) = \ct \, \gamma^d \, \KK(\ga k)  \;,
\end{equation*}
where $\KK\colon \R^d \to \R$ is a smooth, nonnegative function with compact support and $\ct$ is chosen to ensure that $\sum_{k \in \LN} \kg =1$. One should think of this model as an interpolation between the 
\emph{classical} Ising model where every spin interacts only with  spins in a fixed neighbourhood (corresponding to the case $\gamma=1$) and the \emph{mean-field} model, where every spin interacts with every other spin and the 
geometry of the two-dimensional lattice is completely lost (corresponding to the case $\gamma =0$). 
Then for any \emph{inverse temperature} $\beta>0$ we can define the Gibbs measure $\lbg$ on $\SN$ as 
\begin{equation*}
\lbg (\si) := \frac{1}{\Zbg}\exp\Big( - \be\Hg(\si) \Big)\; ,
\end{equation*}
where as usual
\begin{equation*}
\Zbg := \sum_{\si \in \SN} \exp\Big( - \be\Hg(\si) \Big) \; ,
\end{equation*}
denotes the normalisation constant that makes $\lbg$ a probability measure. 

\medskip

We want to obtain the SPDE \eqref{e:AC} as a scaling limit for this model and therefore, we have to introduce dynamics. One natural choice 
is given by the \emph{Glauber-dynamics} which are defined by the generator
\begin{equation*}%\label{e:Generator}
\LgN f(\si) = \sum_{j \in \LN} \cg(\si,j) \big(f(\si^j) -f(\si) \big) \;,
\end{equation*}
 acting on functions $f \colon\SN \to \R$. Here $\si^j \in \SN$ is the spin configuration that coincides with $\si$ except for a flipped spin at position $j$.  
The jump rates  $\cg(\si, j)$  are given by 
$$
\cg(\si, j) := \frac{\lbg(\si^j)}{\lbg(\si) + \lbg(\si^j)}\;.
$$
It is easy to check that these jump rates are reversible with respect to the measure $\lbg$. 

\medskip

In order to derive the right rescaling for the spin-field $\sigma$ we write
\begin{equation*}
%\label{e:defXg}
\Xg(t,x) = \frac{1}{\dg} \hg\bigg( \frac{t}{\ag}, \frac{x}{\eps} \bigg)  \qquad x\in \Le, \, t \geq 0.
\end{equation*}
Here $\hg(k,t) = \sum_{\ell \in \LN} \kg(k - \ell) \sigma(t,\ell)$ is a field of local spatial averages of the field $\sigma$ and $\ag,\dg,\eg$ are scaling factors to be determined
\footnote{Working with a field of local averages rather than with the original field $\sigma$ is more convenient technically, but  a posteriori convergence for the original field $\sigma$ 
in a weaker topology can be shown as well.}.
Let us sketch how to derive the right scaling relations for $\ag,\dg,\eg,\ga$. We only sketch the calculation - the details  can be found in \cite{MW}.
If we apply the generator $\LgN$ to the field $\Xg$ an explicit calculation shows that
\begin{align}
\Xg(t,x) =  & \Xg(0,x) + \int_0^t \!   \bigg(  \frac{ \eg^2}{\ga^2 } \frac{1}{\ag}  \Delta_\gamma \Xg(s,x) + \frac{(\be -1)}{\al} \Kg \ae \Xg(s,x) \notag \\  
&- \frac{\be^3}{3} \frac{\dg^2}{\al} \Kg \ae \Xg^3  (s,x)  + \Kg \ae \Eg(s,x) \bigg)\, ds + \Mg(t,x) \;  ,\label{e:evolution2}
\end{align}
for $x \in \Le$. Here $\Delta_\gamma$ is a difference operator (based on the kernel $\kg$) which is scaled to approximate the  Laplacian acting on the rescaled spatial variable $x$. $\Kg$ is an approximation of a Dirac delta function, $\Mg$ is a martingale and $\Eg$ is a (small) error term. The second relevant relation concerns the noise intensity. This is determined by the quadratic variation of $\Mg$ which is given by 
\begin{align*}
\langle \Mg(\cdot, x ),& \Mg (\cdot, y) \rangle_t  \notag\\
&=4 \frac{\eg^d}{\dg^2\ag } \int_0^t  \!\! \sum_{z \in \Le}\eg^2  \Kg( x -z) \, \Kg(y-z) \,  \Cg\big(s, z\big) \, ds \;, \label{e:QuadrVar2}
\end{align*} 
where $\Cg(s,z) :=  \cg(\si(s/\ag), z/\eg )$. 

\medskip

In order to obtain \eqref{e:AC}  we need to choose scaling factors satisfying  
\begin{equation*} %\label{e:scaling1}
1 \approx \frac{\eps^2}{ \ga^2}\frac{1}{\ag}  \approx \frac{\dg^2}{\ag} \approx \frac{\eg^d}{\dg^2\al } \;,
\end{equation*}
which leads to 
\begin{equation*}%\label{e:scalingn}
\eg \approx \ga^{\frac{4}{4-d}}, \quad \qquad \ag\approx \ga^{\frac{2d}{4-d}}, \quad  \qquad \dg \approx \ga^{\frac{d}{4-d}} \;.
\end{equation*}
It is striking to note, that these equations can be satisfied for spatial dimensions $d=1,2,3$ but they cannot hold as soon as $d=4$. This corresponds exactly to the criticality assumption for \eqref{e:AC}.

\medskip

At first sight \eqref{e:evolution2} suggests that $\beta$ should be so close to one that $(\be -1)/ \al = O(1)$. Note that $\beta=1$ is the critical temperature for the mean field model in our setup. But for $d\geq2$ this naive guess is incorrect. As for the macroscopic equation the microscopic model has to be be renormalised. Indeed, the main result of \cite{MW} states that for $d=2$ if we  set
\begin{align*}%\label{e:scalingbeta}
( \bg -1) = \ag (\mathfrak{c}_\gamma - m^2)\;,
\end{align*}
where the ``mass" $m\in \R$ is fixed and the extra term $\mathfrak{c}_\gamma$ chosen in a suitable way (diverging logarithmically) as $\ga$ goes to $0$, then (under suitable assumptions on the initial data)  $\Xg$ does indeed converge in law 
to the solution of \eqref{e:AC}. A similar result is expected to hold   in three dimensions.

\section{Lecture 2}
\label{s:l2}
We start this lecture by describing how we will keep track of the regularity of space-time functions \emph{and} distributions. After that we give a review of classical solution techniques for semilinear (stochastic) PDEs. We will explain how a lack of regularity causes problems for these theories, using \eqref{e:AC} in $d=2,3$ as our examples. 
We will then describe a \textit{perturbative} approach to these equations. Divergences will be seen clearly in formal expansions of the solutions, this will motivate the choice of diverging renormalization constants appearing in the renormalized equations. We will also go through some  calculations to make the objects at hand concrete; this will prepare us for Section~\ref{s:l3} where we present more abstract parts of the theory.

\subsection{Regularity} 

The functional spaces we use in these notes are a generalization of the usual family of H\"older spaces, these spaces will be denoted by $\Cs^{\alpha}$ where $\alpha$ is the analog of the H\"older exponent. We will measure space-time regularity in a parabolic sense which is why we write $\mathfrak{s}$ in the subscript of $\Cs^{\alpha}$ (the $\mathfrak{s}$ stands for ``scaled"). For $z, z' \in \R \times \R^{d}$ we denote by $|| z' - z||_{\mathfrak{s}}$ the parabolic distance between $\bar{z}$ and $z$. Writing $z' = (t',x')$ and $z = (t,x)$ we set
\[
||z' - z||_{\mathfrak{s}} := 
|t' - t|^{\frac{1}{2}}
+
\sum_{j=1}^{d} |x_{j}' - x_{j}|.
\]
Below it will also be useful to have the notion of  \emph{scaled dimension} $d_{\mathfrak{s}} = d+2$ for space-time $\R \times \R^d$, i.e. the time variable counts for two dimensions.

\begin{definition}\label{positiveholder}
For $\alpha \in (0,1)$ the space $\Cs^{\alpha}(\R \times \R^{d})$ consists of all continous functions $u \colon \R \times \R^d \rightarrow \R$ such for every compact set $\mathfrak{K} \subset \R \times \R^d$ one has 

\begin{align}\label{holdernorm}
\sup_{ 
\substack{ z,z' \in \mathfrak{K}\\ z \not = z' }
}
\frac{\left|u(z) - u(z') \right|}{ ||z - z'||^{\alpha}_{\mathfrak{s}} }
<
\infty\;.
\end{align}
\end{definition}

\begin{remark} In these notes the theory of regularity structures will be applied to problems in compact space-time domains. Accordingly we will be concerned with estimates that are uniform over compacts instead of trying to get estimates uniform over all of space-time.
\end{remark} 

In order to accomodate distributions we will want an analog of H\"older spaces where $\alpha$ is allowed to be negative. A natural choice are the (parabolically scaled) Besov spaces $\{ \CB_{\infty, \infty}^\alpha \}_{\alpha \in \R}$. In particular these spaces agree with our earlier definition for $\alpha \in (0,1)$. In analogy to the positive H\"older spaces we still denote these Besov spaces by $\Cs^{\al}$ when $\alpha < 0$.

There are several ways to characterise these Besov spaces (including Paley-Littlewood decomposition (\cite{BCD}) or wavelet decompositions). For these notes we use a simple definition that is convenient for our purposes. 
First we need some more notation. For any positive integer $r$ we define $B_{r}$ to be the set of all smooth functions $\eta:\R^{d+1} \rightarrow \R$ with $\eta$ supported on the unit ball of $\R^{d+1}$ (in the parabolic distance $||\cdot||_{\mathfrak{s}}$) and $||\eta||_{C^{r}} \le 1$. \rf{Here $||\cdot||_{C^{r}}$ denotes the standard norm on $C^{r}$, that is 
\[
||f||_{C^{r}} 
:= 
\sup_{\alpha, |\alpha| \le r}
\sup_{x \in \R^{d+1}} 
|D^{\alpha}f(x)|
\] where we used multi-index notation.} \refcomment{REF2} We then have the following definition.
\begin{definition}\label{negativeholder}
Suppose that $\alpha < 0$. We define $\Cs^{\alpha}$ to be the set of all distributions $u \in \mathcal{S}'(\R^{d+1})$ such that for any compact set $\mathfrak{K} \subseteq \R \times \R^d$ one has
\begin{equation*}
||u||_{\Cs^{\alpha}(\mathfrak{K})}
:=
\sup_{z \in \mathfrak{K}}
\sup_{
\substack{
\eta \in B_{r} \\
\lambda \in (0,1]}
}
\left|
\frac{
\langle u , \CS_z^\lambda \eta \rangle 
}
{\lambda^{\alpha}}
\right|
<
\infty
\end{equation*}
where have set $r = \lceil -\alpha \rceil$ and
\begin{equation}\label{e:scaledFunction}
 \CS_z^\lambda \eta(s,y) :=  \la^{-d-2} \;\eta\big(\la^{-2}(s-t), \la^{-1}(y-x)  \big) \;.
\end{equation}
\end{definition}
One can adapt Definition \ref{negativeholder} to the case $\alpha > 0$ (extending Definition \ref{positiveholder}). We first need to define the parabolic degree of a polynomial. Given a multindex $ k = (k_{0},k_{1},\dots,k_{d}) \in \mathbf{N} \times \mathbf{N}^{d}$ we define the monomial $z^{k}$ in the standard way, we also define the parabolic degree of this monomial to be $|k|_{\mathfrak{s}}:= 2k_{0} + \sum_{j=1}^{d} k_{j}$. We then define the parabolic degree of a polynomial $P(z)$ to be the maximum of the parabolic degree of all of its constituent monomials.

\begin{definition}\label{positiveholder2}
Suppose that $\alpha \ge 0$. We define $\Cs^{\alpha}$ to be the set of all functions $u \in \mathcal{S}'(\R^{d+1})$ such that there exist polynomials $\{P_{z}\}_{z \in \R^{d+1}}$, each of parabolic degree less than $\alpha$, such that for any compact set $\mathfrak{K} \subseteq \R \times \R^d$ one has
\begin{equation}
||u||_{\Cs^{\alpha}(\mathfrak{K})}
:=
\sup_{z \in \mathfrak{K}}
\sup_{
\substack{
\eta \in B_{0} \\
\lambda \in (0,1]}
}
\left|
\frac{
\langle u - P_{z}, \CS_z^\lambda \eta \rangle 
}
{\lambda^{\alpha}}
\right|
<
\infty.
\end{equation}
\end{definition}

\begin{remark}
It is easily checked that in the above definition $P_{z}$ must just be the $\lfloor \alpha \rfloor$ -th order Taylor expansion for the function $u$ centered at $z$.
\end{remark}

\begin{remark}
Important theorems about $\Cs^{\alpha}$ spaces (like Theorem \ref{Schaudestimate}) fail to hold when $\alpha$ takes integer values \footnote{For example, when $\alpha$ is a positive integer the Besov space $\CB_{\infty, \infty}^\alpha$ differs from the classical H\"older space with parameter $\alpha$.}. We will implicitly assume that any $\Cs^{\alpha}$ space appearing in the assumption or conclusion of a theorem uses a non-integer value of $\alpha$. 
\end{remark}

We now investigate the regularity of space-time white noise. A calculation similar to \eqref{e:white_noise_scaling} shows that for $\lambda \in (0,1]$ one has
\begin{equation}\label{e:Gaussian_scaling}
\E \langle \xi,  \CS_z^\lambda \eta \rangle^2 \lesssim \la^{-d-2} \; . 
\end{equation}
This suggests that $\xi$ has regularity $\al = -\frac{d}{2}  -1$. The following ``Kolmogorov like" theorem which is small variation of \cite[Thm 5.2]{JCH2}  shows that this is almost true. 

\begin{theorem}\label{Kolmogorov} 
Suppose that we are given a $\mathcal{S}(\R^{d+1})$-indexed stochastic process $\xi(\bullet)$ which is linear (that is a linear map from $\mathcal{S}(\R^{d+1})$ to the space of random variables).

Fix any $\alpha < 0$ and a \rf{$p\geq 1$}. \refcomment{REF3} Suppose there exists a constant $C$ such that for all $z \in \R^{d+1}$, and for all $\eta \in \mathcal{S}(\R^{d+1})$ which are supported on the unit ball of $\R^{d+1}$ and satisfy $\displaystyle\sup_{z' \in \R^{d+1}} |\eta(z')| \le 1$ one has
\begin{align}\label{regularitymomentbound}
\E \big| \xi(\CS_z^\la \eta) \big|^p \leq C \la^{\alpha p} \, \textnormal{ for any } \lambda \in (0,1]
\end{align}
then there exists a random distribution $\tilde{\xi}$ in  $\mathcal{S}(\R \times \R^d) $ such that for all $\eta$ we have $\xi(\eta) = \tilde{\xi}(\eta)$ almost surely. Furthermore, for any $\alpha^{\prime} < \alpha - \frac{d+2}{p}$ and any compact $\mathfrak{K} \subseteq \R \times \R^d$ we have
\begin{align*}
\E \| \tilde{\xi} \|^p_{\Cs^{\alpha^\prime}(\mathfrak{K})} < \infty\;.
\end{align*}
\end{theorem}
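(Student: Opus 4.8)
The plan is to prove this ``Kolmogorov-type'' criterion by a wavelet (or Littlewood--Paley) expansion argument, mimicking the classical proof of Kolmogorov's continuity theorem but adapted to negative regularity. First I would fix a compactly supported orthonormal wavelet basis of sufficient regularity, say $\{\varphi_k\}$ for the translates of the father wavelet at scale $0$ together with $\{\psi^{(i)}_{n,k}\}$ for the mother wavelets at dyadic scales $2^{-n}$, $n \geq 0$, in the parabolically scaled sense (so that a shift by $2^{-n}$ in time counts like a shift by $2^{-2n}$). The point of such a basis is that each basis function is, up to a fixed multiplicative constant depending only on the basis, of the form $\CS_z^\lambda \eta$ with $\lambda \eqsim 2^{-n}$ and $\eta$ in the admissible class of Definition~\ref{negativeholder} (after rescaling the $C^r$ bound to a uniform $\sup$ bound, which is where we use that $\eta$ need only satisfy a sup bound $\le 1$ in the hypothesis \eqref{regularitymomentbound} — this matches the hypothesis of the theorem rather than the stronger $B_r$ condition). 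Then $\tilde\xi$ is \emph{defined} by its wavelet coefficients $\langle \xi, \varphi_k\rangle$ and $\langle \xi, \psi^{(i)}_{n,k}\rangle$, i.e. by the formal series $\tilde\xi = \sum_k \langle\xi,\varphi_k\rangle \varphi_k + \sum_{n,i,k}\langle\xi,\psi^{(i)}_{n,k}\rangle \psi^{(i)}_{n,k}$, and the work is to show this series converges in $\Cs^{\alpha'}(\mathfrak{K})$ almost surely and in $L^p$, and that it agrees with $\xi$ on test functions.

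Next I would estimate the wavelet coefficients. By the hypothesis \eqref{regularitymomentbound}, each coefficient at scale $n$ satisfies $\E|\langle\xi,\psi^{(i)}_{n,k}\rangle|^p \lesssim 2^{-n(\alpha + \frac{d+2}{2})p}$ (the extra $\frac{d+2}{2}$ accounting for the $L^2$-normalisation of the wavelet versus the $L^1$-type normalisation $\CS_z^\lambda$ in the hypothesis). The number of wavelets at scale $n$ whose support meets a fixed compact $\mathfrak{K}$ is $\lesssim 2^{n(d+2)}$. A union bound / Borel--Cantelli estimate over these finitely many (per scale) coefficients then gives, for any $\theta < \frac{1}{p}$, a bound of the form $\sup_{i,k:\,\mathrm{supp} \subset \mathfrak K} |\langle\xi,\psi^{(i)}_{n,k}\rangle| \lesssim 2^{-n(\alpha + \frac{d+2}{2})}\, 2^{n(d+2)\theta}$ with a random constant having finite $p$-th moment. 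To control $\|\tilde\xi\|_{\Cs^{\alpha'}(\mathfrak K)}$ one then tests the series against an arbitrary $\CS_z^\lambda\eta$ with $\lambda \in (0,1]$, $\eta \in B_r$: pairing $\CS_z^\lambda\eta$ with a wavelet at scale $2^{-n}$ gives, by standard estimates on inner products of bumps at different scales (using the regularity $C^r$ of $\eta$ and the cancellation/localisation of the wavelets), a bound that decays like $\min(\lambda^{r}2^{nr}, 2^{n}\lambda)^{\,?}$ — concretely $\lesssim 2^{(d+2)n/2}(\lambda 2^n)^{-A}$ for scales $2^{-n} \ll \lambda$ using $A$ derivatives/moments, and $\lesssim 2^{(d+2)n/2}(\lambda^{-1}2^{-n})^{r}$ ... the precise exponents are routine and I would not grind through them. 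Summing the resulting geometric-type series over $n$ (splitting at $2^{-n} \sim \lambda$) produces $\lesssim \lambda^{\alpha - (d+2)\theta}$, and choosing $\theta$ so that $\alpha - (d+2)\theta > \alpha'$, which is possible precisely because $\alpha' < \alpha - \frac{d+2}{p}$ and $\theta$ may be taken close to $\frac1p$, yields $\|\tilde\xi\|_{\Cs^{\alpha'}(\mathfrak K)} < \infty$ with finite $p$-th moment.

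Finally I would check the two remaining points: that the series defines a genuine element of $\mathcal{S}'$ and that $\tilde\xi(\eta) = \xi(\eta)$ a.s. for each fixed Schwartz $\eta$. The first follows from the same coefficient bounds, since they show the partial sums converge in the (negative-regularity, hence distributional) topology; the second follows because both sides are $L^p$ limits of the same truncated wavelet expansions of $\eta$ — here one uses linearity of $\xi(\bullet)$ together with the $L^p$ bound on coefficients to pass the wavelet expansion $\eta = \sum \langle \eta,\varphi_k\rangle \varphi_k + \cdots$ (which converges in every $C^r$, hence is tested nicely) through $\xi$, and an a.s. statement is recovered from $L^p$ convergence along a subsequence plus a density/continuity argument. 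I expect the main obstacle to be the bookkeeping in the scale-by-scale summation: one must carefully track the parabolic scaling (time counting double) in both the wavelet normalisation and the bump-versus-bump inner product estimates, and verify that the admissibility class in the hypothesis (sup bound only, $\lambda \le 1$, unit-ball support) is genuinely enough to cover all the rescaled wavelets and all the test objects $\CS_z^\lambda\eta \in B_r$ needed to evaluate the $\Cs^{\alpha'}$ norm; getting the constant $\frac{d+2}{p}$ exactly right in the regularity loss is the crux and comes out of balancing the $2^{n(d+2)\theta}$ entropy factor against the decay, with $\theta \uparrow \frac1p$.
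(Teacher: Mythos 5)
Your proposal is essentially the paper's own argument: the sketch in the text also hinges on a wavelet/Littlewood--Paley characterisation of the $\Cs^{\alpha'}$ norm via pairings with a single test function at parabolically dyadic scales and grid points, followed by the sup-to-sum trick and taking expectations at the very end. One small slip: to make the random constant controlling $\sup_{i,k}|\langle\xi,\psi^{(i)}_{n,k}\rangle|$ have finite $p$-th moment after summing over the $\sim 2^{n(d+2)}$ coefficients per scale you need $\theta > \tfrac1p$, not $\theta < \tfrac1p$; the admissible window is then $\theta\in\bigl(\tfrac1p,\tfrac{\alpha-\alpha'}{d+2}\bigr)$, which is non-empty exactly under the hypothesis $\alpha'<\alpha-\tfrac{d+2}{p}$, so the conclusion you draw is still correct.
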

\begin{proof}[Sketch of proof] We start by recalling the argument for the classical Kolmogorov criterion for a stochastic process $X(t)$ indexed by $t \in \R$ (ignoring all questions of suitable modifications). The first step is the purely deterministic observation that for any continuous function $X$ we have
\begin{align*}
\sup_{s \neq t \in [-1,1]} \frac{|X(s) - X(t) |}{|s-t|^{\alpha^\prime}} \ls \sup_{k \geq 0} \sup_{s\in 2^{-k}\Z \cap [-1,1)} 2^{k \alpha^\prime} |X(s+ 2^{-k}) - X(s) |\;.
\end{align*} 
This implies (still in a purely deterministic fashion) that
\begin{align*}
\Big(& \sup_{s \neq t \in [-1,1]} \frac{|X(s) - X(t) |}{|s-t|^{\alpha^\prime}}\Big)^p \\
& \ls \sup_{k \geq 0} \sup_{s\in 2^{-k }\Z \cap [-1,1)} 2^{k \alpha^\prime p} |X(s+ 2^{-k}) - X(s) |^p\\
& \ls  \sum_{k \geq 0} \sum_{s\in 2^{-k }\Z \cap [-1,1)} 2^{k \alpha^\prime p} |X(s+ 2^{-k}) - X(s) |^p.
\end{align*}
The only stochastic ingredient consists of taking the expectation of this expression which yields 
\begin{align*}
\E  \Big(& \sup_{s \neq t \in [-1,1]} \frac{|X(s) - X(t) |}{|s-t|^{\alpha^{\prime}}} \Big)^p \\
& \ls \sup_{s \neq t}\Big( \frac{1}{|s-t|^{\alpha p}} \E |X(s) - X(t)|^p \Big) \;\sum_{k \geq 0} 2^k 2^{k {\alpha'} p } 2^{-k \alpha p}\;,
\end{align*}
and summing the geometric series.

\medskip
The argument for  Theorem \ref{Kolmogorov} follows a very similar idea. The crucial deterministic observation is that the Besov norm $\| \xi \|_{\Cs^{\alpha^\prime}}$ can be controlled by 
\begin{align*}
\| \xi \|_{\Cs^{\alpha^\prime}(\mathfrak{K})} \ls \sup_{k \geq 0} \sup_{x \in 2^{-2k}\Z \times 2^{-k}\Z^d \cap \bar{\mathfrak{K}}} 2^{- k \alpha'} \big( \xi, \mathcal{S}_{x}^{2^{-k}}\eta \big) \;,
\end{align*}
where $\bar{\mathfrak{K}}$ is another compact set that is slightly larger than $\mathfrak{K}$ and $\eta$ is a \emph{single}, well chosen test function. There are different ways to construct such a function $\eta$ e.g. using wavelets (as in \cite{RegStr}) or using the building blocks of the Paley-Littlewood decomposition (as in \cite{JCH2}). The argument then follows the same strategy replacing the $\sup$ by a sum and taking the expectation in the last step only. 
\end{proof}

Going back to the discussion of white noise, we recall the basic fact that for a Gaussian random variable $X$ control of the second moment gives control on all moments - for all positive integers $p$ there exists a constant $C_{p}$ such that 
\[
\mathbb{E}[ |X|^p ] 
\le 
C_{p}\left( \mathbb{E}[X^2] \right)^{p/2}.
\] 
It follows that for Gaussian processes once one has \eqref{regularitymomentbound} for $p=2$ then a similar bound holds for all $p$. Thus we can conclude that $\xi$ has regularity $\Cs^{-\frac{d}{2}-1 - \ka}$ for every $\ka>0$.

\subsection{Linear theory}
From now on we will assume periodic boundary conditions in space - the spatial variable $x$ will take values in the $d$-dimensional torus $\T^d$ (which is identified with $[-\pi, \pi]^d$). When more convenient 
we will sometimes view a function or distribution defined on $\R \times \T^d$ as defined on $\R \times \R^d$ and periodic in space. 
We first recall Duhamel's principle or the \textit{variation of constants formula}. Consider the inhomogeneous heat equation given by 
\begin{align*}
\partial_t u &= \Delta u +f \\
u(0, \cdot) &= u_0 \;
\end{align*}
where $f$ is a space-time function and $u_{0}$ is a spatial initial condition. Under very general regularity assumptions on $f$ and $u_0$ the solution is given by the formula
\begin{equation}\label{e:Duhamel}
u(t,x) =  \int_0^t \int_{\T^d} K(t-s,x-y) \; f(s,y) \,dy \, ds  + \int_{\T^d} K(t, x-y) u_0(y) \, .
\end{equation}
Here $K$  is the heat kernel on the torus, which for $t>0$ and $x \in \T^d$ is given by
\begin{equation*}
K(t,x) = \sum_{k \in 2 \pi \Z^d}  \frac{1}{(4 \pi t )^{\frac{d}{2}}} \exp \Big(- \frac{(x- k)^2}{t} \Big) \;.
\end{equation*}
We extend $K$ to negative times $t$ by defining it to be $0$ on  $((-\infty,0] \times \T^d) \setminus \{(0,0)\}$.  We will then view $K$ as smooth function on $\R \times \T^{d} \setminus \{0,0\}$. When $f$ is a space-time distribution and/or $u_{0}$ is a space distribution the right hand side of \eqref{e:Duhamel} is a formal expression but in many cases it can be made rigorous via the duality pairing between distributions and test functions.  More precisely, we say that $\xi$ is a distribution on  $(0,\infty) \times \T^d$ if it is a distribution on $\R \times \T^d$ which vanishes when tested against test-functions that are supported in $(-\infty, 0] \times \T^d$. Note that in general it is not possible, to multiply a distribution with the indicator function $\mathbf{1}_{(0,\infty)}(t)$, so that even in a distributional sense the integral over $(0,t)$ cannot always be defined 
(think e.g. of the distribution $\varphi \mapsto \mathrm{P.V.} \int \frac{1}{t} \varphi\, dt$  on $\R$).
However, for white noise $\xi$ it is easy to define  $\xi \mathbf{1}_{(0,\infty)}(t)$ as an element of $\Cc^{-\frac{d+2}{2} -\ka}$.

\medskip

To keep our exposition simple we will always assume that the initial condition $u_0$ is zero. We now give an important classical result (essentially a version of the parabolic Schauder estimates, see e.g. \cite{Krylov}, for a statement which implies our version see  \cite[Sec. 5]{RegStr}). In what follows $\Lambda_{t}$ denotes the domain of integration in \eqref{e:Duhamel}, that is $\Lambda_t := (0,t) \times \T^d$ and we use $\Lambda = (0,\infty) \times \T^d$. 

\begin{theorem}[Schauder Estimate]\label{Schaudestimate} For  $f$ in $\Cs^\alpha(\Lambda)$  define 
\begin{equation}\label{Duh}
u(s,x) :=   \int_{\Lambda_s} \,K(s-r,x-y)\; f(r,y) \, d y \; dr \;
\end{equation}
interpreted in a distributional sense if  $\alpha<0$. Then if $\alpha \notin \Z$, we have 
\begin{equation*}
\|u\|_{\Cs^{\alpha+2}(\Lambda_t)}  \ls \|f \|_{\Cs^{\alpha}(\Lambda_t)}\,.
\end{equation*}  
\end{theorem}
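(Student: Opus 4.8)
The plan is to prove the estimate by decomposing the heat kernel $K$ into dyadic pieces, estimating the contribution of each piece against the $\Cs^\alpha$ norm of $f$, and summing a geometric series. First I would recall the standard decomposition $K = \sum_{n \ge 0} K_n$, where each $K_n$ is smooth, supported on a parabolic annulus of scale $\|z\|_\s \es 2^{-n}$ (with $K_0$ absorbing the tail $\|z\|_\s \gtrsim 1$), and satisfies the natural scaling bounds $|D^k K_n(z)| \ls 2^{n(d_\s - 2 + |k|_\s)}$ for every multi-index $k$. This decomposition is purely deterministic and standard; I would cite \cite[Sec.~5]{RegStr} for its construction and properties, in particular the fact that each $K_n$ annihilates polynomials up to a fixed degree (which is what makes the positive-regularity case work).

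Next, with $u = \sum_n u_n$ where $u_n := K_n \star f$ (restricted to $\Lambda_s$), I would estimate $u_n$ in $\Cs^{\alpha+2}$. The point is that testing $u_n$ against a rescaled bump $\CS_z^\lambda \eta$ amounts to testing $f$ against $K_n \star \CS_z^\lambda \eta$; when $2^{-n} \ls \lambda$ this convolution is, up to a bounded multiplicative constant depending on scaling, again an admissible test function at scale $\lambda$, giving a contribution $\ls 2^{-2n}\lambda^\alpha$ after accounting for the two extra orders of regularity the kernel provides, while when $2^{-n} \gtrsim \lambda$ one instead writes $K_n \star \CS_z^\lambda\eta$ as a superposition of test functions at scale $2^{-n}$ and gets a contribution $\ls \lambda^{?}2^{n\cdot ?}$ that is summably small in $n$. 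Summing over $n$ produces the geometric series that converges precisely because $\alpha + 2 - \alpha = 2 > 0$ on one side and because $\alpha \notin \Z$ prevents a logarithmic divergence at the borderline exponent on the other; in the case $\alpha + 2 > 0$ one must also check that the polynomials $P_z$ required by Definition~\ref{positiveholder2} are generated correctly, which follows from the annihilation property of the $K_n$. The restriction to the domain $\Lambda_t$ (as opposed to all of $\Lambda$) is handled by noting that $K$ is supported in $\{t \ge 0\}$, so the convolution only sees values of $f$ at earlier times, and the compact-set formulation of the norms localizes everything; one should be mildly careful near the temporal boundary $t = 0$, but since $u_0 = 0$ and $f$ is only integrated over $(0,s)$ this causes no genuine trouble.

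The main obstacle I expect is the bookkeeping at the two ``ends'' of the dyadic sum: showing that for $\lambda \ll 2^{-n}$ the rescaled test function, after convolution with $K_n$, can be legitimately re-expanded as an order-one combination of admissible test functions at the coarser scale $2^{-n}$ with the right power of $\lambda$ out front, and dually that for $\lambda \gg 2^{-n}$ one genuinely gains the full two derivatives. This is where the non-integrality of $\alpha$ is essential and where the argument is least ``soft''. Everything else — the reduction to the kernel decomposition, the use of the duality pairing for $\alpha < 0$, and the identification of the Taylor polynomials for $\alpha > 0$ — is routine once that core estimate is in hand. I would therefore present the kernel decomposition and its properties as a black box, state the single-scale bound as the key lemma, and then devote the bulk of the proof to that bound, leaving the geometric summation as a one-line conclusion.
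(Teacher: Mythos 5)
The paper does not actually prove this theorem: immediately after the statement the authors write ``We do not give a proof of this result here; compare however to the discussion of the integration map for regularity structures in Section~\ref{sec: integration} below,'' and they cite \cite[Sec.~5]{RegStr} for a statement that implies it. So there is no in-paper proof to compare against; the reference proof in \cite{RegStr} is exactly the dyadic-decomposition argument you sketch, and your outline is aligned with it.

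On the merits of your sketch: the overall structure (decompose $K = \sum_n K_n$ into annular pieces at scale $2^{-n}$, test against $\CS_z^\lambda\eta$, split into $2^{-n} \ls \lambda$ and $2^{-n} \gtrsim \lambda$, sum a geometric series) is right, and you correctly identify that $\alpha \notin \Z$ is what prevents a logarithmic borderline. The place you wave hands hardest is exactly the place that needs the most care. For the coarse scales $2^{-n} \gtrsim \lambda$ the naive bound is $\sum_{2^{-n} \gtrsim \lambda} 2^{-n(\alpha+2)}$; this is already $\ls \lambda^{\alpha+2}$ when $\alpha+2<0$, but when $\alpha+2>0$ it only gives an $O(1)$ bound, which is not what Definition~\ref{positiveholder2} asks for. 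The fix is that for $\alpha+2>0$ one must test $u - P_z$, and the polynomial $P_z$ must be built scale by scale (as the sum of the Taylor jets of the $K_n \star f$ at $z$); after the subtraction, each coarse-scale term gains the missing power of $\lambda$ from a Taylor-remainder estimate, and the sum then genuinely produces $\lambda^{\alpha+2}$. Your phrase ``which follows from the annihilation property of the $K_n$'' compresses this step; in the conventions of these notes the moment-vanishing condition is imposed on the full $K$ (condition 4 in Section~\ref{sec: integration}), and one should be explicit about how it propagates to the decomposition and how it is actually used. I would also make the coarse-scale exponents you leave as question marks explicit before claiming summability, since that is where the $\alpha \notin \Z$ hypothesis actually enters.
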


The Schauder estimate shows that the use of parabolic scaling of space-time is natural when measuring regularity. We do not give a proof of this result here; compare however to the 
discussion of the integration map for regularity structures in Section~\ref{sec: integration} below.

\medskip

We now apply Duhamel's principle to the stochastic heat equation \eqref{e:SHE} (again with vanishing initial condition). Formally the solution is given by
\begin{equation}\label{e:Z}
Z(t,x) = \int_{\Lambda_t} K(t-s,x-y) \; \xi(s,y) \,dy \, ds \;.
\end{equation}
The standard approach is to view $Z$ as a stochastic integral (the resulting object is called a stochastic convolution). However we can also define $Z$ determinstically for each fixed realization of white noise. Each such realization of white-noise will be an element of $\Cs^{-\frac{d}{2}-1-\ka}(\R \times \T^d)$, the Schauder estimate then implies that $Z \in \Cs^{1- \frac{d}{2}- \ka}(\R \times \T^d)$
for every $\ka>0$. It follows that $Z$ is a continuous function in $d=1$ while for $d \geq 2$ we expect $Z$ to be a distribution.

\medskip
Instead of using the Schauder estimate we can also get a handle on the regularity of $Z$ by establishing the estimate \eqref{regularitymomentbound} for $p=2$ (since $Z$ is Gaussian). This is an instructive calculation and it gives us a good occasion to introduce \emph{graphical notation} in familiar terrain. From now on we denote the process $Z$ introduced in \eqref{e:Z} by $\<1>$. This is to be read as a simple graph, where the circle at the top represents an occurrence of the white noise and the line below represents an integration against the heat kernel. As above we will  use the convention to combine the space and time variable into a single variable $z=(t,x)$. With these conventions testing $\<1>$ against the rescaled test-function  $\CS_z^\la \eta$,  defined as above in \eqref{e:scaledFunction}, yields  
\begin{equation}\label{e:Z2}
( \<1> , \CS_z^\la \eta)  =  \int_\Lambda   \int_{\Lambda}  \CS_z^\la \eta (z_1) \, K(z_1-z_2) \, dz_1  \;\xi(dz_2) \;. \end{equation} 
Then, using the characterising property \eqref{e:White_noise_covariance} of white noise we get
\begin{align}\label{e:integral11}
\E 
&\left[
( \<1> ,  \CS_{z}^\la \eta )^2 
\right] \notag\\
&=
\int_{\Lambda}  \int_{\Lambda}\int_{\Lambda} \CS_{z}^\la \eta(z_1) \CS_{z}^\la \eta(\bar{z}_1)  K(z_1 - z_2)  K(\bar{z}_1-z_2) \,  dz_2 \,dz_1 d\bar{z}_1 \; .
\end{align}

The only property of the kernel $K$ that enters our calculations is how quickly it grows near the origin. This motivates the following definition.

\begin{definition}\label{def: order}
Given a real number $\zeta$, we say a function $G: \R^{d+1} \setminus \{0\} \mapsto \R$ is a kernel of order $\zeta$ if
\[
\| G \|_{\zeta} 
:= 
\sup_{0 < |z|_{\s} < 1}
|G(z)| \times  \| z\|_{\mathfrak{s}}^{- \zeta} 
 < \infty.
\]
\end{definition}
Then one has the following easy lemma.
\begin{lemma}
The kernel $K(z)$ of \eqref{e:Z} and \eqref{e:Z2} is of order $-(d_{\mathfrak{s}}-2)$ where $d_{\mathfrak{s}}=d+2$ is the scaled dimension introduced above.
\end{lemma}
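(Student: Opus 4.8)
The claim is purely about the near-origin behaviour of the heat kernel $K$, so the plan is to unwind Definition~\ref{def: order} and estimate $|K(z)|$ for $0 < \|z\|_{\s} < 1$. Writing $z = (t,x)$, recall that $K(t,x) = 0$ whenever $t \le 0$ (off the origin), so the only regime to control is $t > 0$; in that regime
\[
K(t,x) = \sum_{k \in 2\pi\Z^d} \frac{1}{(4\pi t)^{d/2}} \exp\Bigl(-\frac{(x-k)^2}{t}\Bigr).
\]
First I would observe that, since we only care about $\|z\|_{\s} < 1$, we may take $|x| \le 1$, and then the terms with $k \neq 0$ are harmless: for those, $(x-k)^2 \gtrsim 1$ uniformly, so $\exp(-(x-k)^2/t) \le \exp(-c/t)$, and $t^{-d/2}\exp(-c/t)$ is bounded (indeed goes to $0$) as $t \downarrow 0$, so the entire tail sum contributes an $O(1)$ term — certainly bounded by $\|z\|_{\s}^{-d}$ on the unit parabolic ball. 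Thus it suffices to bound the single Gaussian term $(4\pi t)^{-d/2}\exp(-x^2/t)$.

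For that term I would use the elementary bound: for any $\theta > 0$ there is a constant $C_\theta$ with $s^{\theta} e^{-s} \le C_\theta$ for all $s \ge 0$. Applying this with $s = x^2/t$ and $\theta = d/2$ gives
\[
\frac{1}{(4\pi t)^{d/2}} \exp\Bigl(-\frac{x^2}{t}\Bigr) \le \frac{C}{t^{d/2}} \cdot \frac{t^{d/2}}{|x|^{d}} \wedge \frac{C}{t^{d/2}},
\]
i.e. $K(t,x) \lesssim \min(t^{-d/2}, |x|^{-d})$. The key step is then the interpolation: for any reals $a,b > 0$ one has $\min(a^{-d/2}, b^{-d}) \lesssim (a^{1/2} + b)^{-d}$ (split into the cases $a^{1/2} \le b$ and $a^{1/2} > b$, handling each with the appropriate branch of the minimum). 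Taking $a = t$, $b = |x|$ gives
\[
K(t,x) \lesssim \bigl(t^{1/2} + |x|\bigr)^{-d} = \|z\|_{\s}^{-d},
\]
since $\|z\|_{\s} = |t|^{1/2} + |x|$ (writing $|x| = \sum_j |x_j|$, which is comparable to the Euclidean $|x|$ up to dimensional constants). This bound holds uniformly on $0 < \|z\|_{\s} < 1$, so $\|K\|_{-d} < \infty$, which is exactly the assertion that $K$ is a kernel of order $-d = -(d_{\s} - 2)$.

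I do not anticipate a serious obstacle here — this is a standard heat-kernel estimate. The only mild care needed is (i) confirming that the periodic ($k\neq 0$) correction terms really are negligible near the origin, which I would dispatch by the uniform lower bound $(x-k)^2 \gtrsim 1$ valid once $|x| \le 1$ and $k \neq 0$; and (ii) getting the min–interpolation inequality $\min(t^{-d/2}, |x|^{-d}) \lesssim (t^{1/2}+|x|)^{-d}$ stated cleanly, which is a one-line case distinction. I would also remark, if desired, that the bound is sharp: setting $x = 0$ gives $K(t,0) = (4\pi t)^{-d/2}(1 + o(1))$, which saturates $\|z\|_{\s}^{-d}$ as $t \downarrow 0$, so the order $-(d_{\s}-2)$ cannot be improved.
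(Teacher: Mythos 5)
Your proof is correct, and it follows the standard heat-kernel estimate one would expect. The paper itself does not supply a proof of this lemma — it simply labels it "the following easy lemma" and moves on — so there is no authorial argument to compare against; but the chain you use (localize to the $k=0$ term near the origin, apply $s^{d/2}e^{-s}\le C$ to get $K(t,x)\lesssim \min(t^{-d/2},|x|^{-d})$, then interpolate to $(t^{1/2}+|x|)^{-d}=\|z\|_{\s}^{-d}$) is exactly the argument the authors are implicitly invoking, and each step as you state it is sound. Two small remarks for completeness: the dismissal of the $k\neq 0$ terms should use that for $t\in(0,1)$ the full sum $\sum_{k\neq 0} t^{-d/2}e^{-c|k|^2/t}$ is bounded (not merely each term), which follows since $\sum_{k\neq 0}e^{-c|k|^2/t}\le\sum_{k\neq 0}e^{-c|k|^2}<\infty$ and the remaining factor $t^{-d/2}e^{-c'/t}$ is bounded as $t\downarrow 0$ — your phrasing ("the entire tail sum contributes an $O(1)$ term") is right but worth saying this way to make the uniformity explicit; and your sharpness remark ($K(t,0)\sim t^{-d/2}$) is a nice touch showing the exponent $-d=-(d_{\s}-2)$ cannot be improved.
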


We now introduce a graphical representation of the \emph{integral}  \eqref{e:Z2} -- of course at this stage one could still evaluate the integral by hand easily, but this formalism becomes very convenient for more complicated integrals. In this graphical formalism we represent \eqref{e:integral11} by
\begin{equation}\label{e:graph1}
\E 
\left[
( \<1> ,  \CS_{z}^\la \eta )^2 
\right]= \quad
\begin{tikzpicture}[scale=0.5,baseline=-0.0cm]
\node at (-2.5,0) [var] (left){};
\node at (2.5,0) [var] (right) {};
\node at (0,0) [square](middle){};
\draw[generic] (left) to node[labl, anchor=north]{\tiny $-d_{\mathfrak{s}}+2$} (middle) ; 
\draw[generic] (middle) to node[labl, anchor=north]{\tiny $-d_{\mathfrak{s}}+2$} (right) ;

\end{tikzpicture}
\;.
\end{equation}
Again, each line represents an occurrence of the kernel $K$ and the order is denoted below. The black square in the middle represents an integration over the space-time $\Lambda$ and the grey vertices at the sides represent an integration against the scaled test-function $\CS_{z}^\la \eta$. Note that there is a simple ``graphical derivation" of \eqref{e:graph1} which consists of ``gluing" the dots in two copies of $\<1>$ together.

\medskip
The following lemma (essentially \cite[Lemma 10.14]{RegStr}) is simple but extremely helpful, because it permits to perform the analysis of integrals, which are potentially much more complicated than \eqref{e:integral11} on the level of graphs, by recursively 
reducing the complexity of the graph, keeping track only of the relevant information. 
\begin{lemma}
Let $K_1, K_2$ be kernels of order $\zeta_1, \zeta_2 \in \R$ with compact support. Then their product $K_1 K_2$ is a kernel of order $\zeta_1 + \zeta_2$ and we have $\| K_1 K_2\|_{\zeta_1 + \zeta_2} \leq \| K_1 \|_{\zeta_1}  \|K_2\|_{\zeta_2}$. If furthermore, 
\begin{equation}\label{e:condKernel}
\zeta_1 \wedge \zeta_2 > - d_{\mathfrak{s}} \qquad \text{ and} \qquad \zeta = \zeta_1 + \zeta_2 + d_{\mathfrak{s}} < 0 \;,
\end{equation} \refcomment{REF4} 
\refcomment{The second condition of \eqref{e:condKernel} is correct and agrees with \cite[Lemma 10.14]{RegStr}}
then $K_1 \ast K_2$ (where $\ast$ denotes the convolution on $\R \times \T^d$) is a kernel of order $\zeta$ and we have $\| K_1 \ast K_2\|_{\zeta } \ls \| K_1 \|_{\zeta_1}  \|K_2\|_{\zeta_2}$.
\end{lemma}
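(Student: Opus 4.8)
The plan is to reduce both assertions to elementary estimates on parabolically scaled singular integrals. Throughout I would use that $\|\cdot\|_{\mathfrak{s}}$ obeys a triangle inequality $\|z+z'\|_{\mathfrak{s}}\le\|z\|_{\mathfrak{s}}+\|z'\|_{\mathfrak{s}}$ (both $|t|^{1/2}$ and $\sum_j|x_j|$ being subadditive), that a parabolic ball of radius $\rho$ has volume $\es\rho^{d_{\mathfrak{s}}}$, and the two consequences (obtained by summing a dyadic geometric series over the shells $\{2^{-k-1}<\|y\|_{\mathfrak{s}}\le2^{-k}\}$): for a fixed real $\beta$,
\[
\int_{\|y\|_{\mathfrak{s}}\le\rho}\|y\|_{\mathfrak{s}}^{\beta}\,dy\es\rho^{\beta+d_{\mathfrak{s}}}\ \ \text{if }\beta>-d_{\mathfrak{s}},\qquad \int_{\rho<\|y\|_{\mathfrak{s}}\le1}\|y\|_{\mathfrak{s}}^{\beta}\,dy\ls\rho^{\beta+d_{\mathfrak{s}}}\ \ \text{if }\beta+d_{\mathfrak{s}}<0.
\]

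For the product there is essentially nothing to do: for $0<\|z\|_{\mathfrak{s}}<1$ one multiplies the two defining bounds, $|K_1(z)K_2(z)|\le\|K_1\|_{\zeta_1}\|K_2\|_{\zeta_2}\,\|z\|_{\mathfrak{s}}^{\zeta_1+\zeta_2}$, and compact support of $K_1K_2$ is clear.

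For the convolution I would first normalise $\|K_1\|_{\zeta_1}=\|K_2\|_{\zeta_2}=1$ by homogeneity, and take (as in \cite[Lemma~10.14]{RegStr}) both kernels supported in the parabolic unit ball, so that $|K_i(w)|\le\|w\|_{\mathfrak{s}}^{\zeta_i}$ for \emph{all} $w\neq0$; since the supports are small, the convolution on $\R\times\T^d$ agrees near the origin with the Euclidean one up to smooth, non-singular contributions of lattice translates, so one may work on $\R^{d+1}$. Fixing $z$ with $R:=\|z\|_{\mathfrak{s}}\in(0,1)$, the goal is $\bigl|(K_1\ast K_2)(z)\bigr|\ls R^{\zeta}$, and the main step is to split the (bounded) domain $\{\|y\|_{\mathfrak{s}}\le1\}\cap\{\|z-y\|_{\mathfrak{s}}\le1\}$ into
\[
D_1=\{\|y\|_{\mathfrak{s}}\le R/2\},\qquad D_2=\{\|z-y\|_{\mathfrak{s}}\le R/2\},\qquad D_3=\{\|y\|_{\mathfrak{s}}>R/2\}\cap\{\|z-y\|_{\mathfrak{s}}>R/2\},
\]
and to bound each piece of $\int\|z-y\|_{\mathfrak{s}}^{\zeta_1}\|y\|_{\mathfrak{s}}^{\zeta_2}\,dy$ separately. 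On $D_1$ the triangle inequality gives $R/2\le\|z-y\|_{\mathfrak{s}}\le 3R/2$, hence $\|z-y\|_{\mathfrak{s}}^{\zeta_1}\es R^{\zeta_1}$, and the first display above (with $\beta=\zeta_2>-d_{\mathfrak{s}}$) yields $\int_{D_1}\ls R^{\zeta_1}\,R^{\zeta_2+d_{\mathfrak{s}}}=R^{\zeta}$; $D_2$ is handled identically after exchanging the two kernels, now invoking $\zeta_1>-d_{\mathfrak{s}}$. On $D_3$ one checks, again from the triangle inequality and by distinguishing $\|y\|_{\mathfrak{s}}\ge2R$ from $R/2<\|y\|_{\mathfrak{s}}<2R$, that $\|z-y\|_{\mathfrak{s}}\es\|y\|_{\mathfrak{s}}$, so the integrand is $\es\|y\|_{\mathfrak{s}}^{\zeta_1+\zeta_2}$; since $D_3$ lies in an annulus $R/2\ls\|y\|_{\mathfrak{s}}\ls1$ and $\zeta_1+\zeta_2+d_{\mathfrak{s}}=\zeta<0$, the second display gives $\int_{D_3}\ls R^{\zeta}$. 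Summing the three contributions and undoing the normalisation gives $\|K_1\ast K_2\|_{\zeta}\ls\|K_1\|_{\zeta_1}\|K_2\|_{\zeta_2}$.

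The argument is mostly bookkeeping, and the only points that need care are: (a) the comparisons $\|z-y\|_{\mathfrak{s}}\es R$ on $D_1\cup D_2$ and $\|z-y\|_{\mathfrak{s}}\es\|y\|_{\mathfrak{s}}$ on $D_3$, all of which come from the triangle inequality for $\|\cdot\|_{\mathfrak{s}}$; and (b) keeping straight which hypothesis in \eqref{e:condKernel} plays which role --- $\zeta_1\wedge\zeta_2>-d_{\mathfrak{s}}$ is exactly integrability of the two point singularities and controls the pieces $D_1,D_2$, while $\zeta=\zeta_1+\zeta_2+d_{\mathfrak{s}}<0$ is what makes the bulk integral over $D_3$ decay like $R^{\zeta}$ rather than saturating at a constant (or, in the excluded borderline case $\zeta=0$, producing a logarithm). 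So I do not expect a genuine obstacle here beyond getting this region decomposition and these two bookkeeping points right.
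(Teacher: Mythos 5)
The paper does not prove this lemma; it only states it and cites \cite[Lemma 10.14]{RegStr}. Your proof is correct, and the three-region decomposition $D_1,D_2,D_3$ according to which of $\|y\|_{\mathfrak{s}}$, $\|z-y\|_{\mathfrak{s}}$ is $\lesssim \|z\|_{\mathfrak{s}}$ is essentially the argument Hairer gives there, so there is nothing to compare against within the paper itself. The bookkeeping checks out: on $D_1$ (resp.\ $D_2$) the hypothesis $\zeta_2>-d_{\mathfrak{s}}$ (resp.\ $\zeta_1>-d_{\mathfrak{s}}$) makes the local integral converge and contribute $R^{\zeta_1}R^{\zeta_2+d_{\mathfrak{s}}}$, while on $D_3$ the comparability $\|z-y\|_{\mathfrak{s}}\es\|y\|_{\mathfrak{s}}$ and the hypothesis $\zeta<0$ give the decaying tail $R^\zeta$; and the reduction from $\R\times\T^d$ to $\R^{d+1}$ is harmless since, for $\|z\|_{\mathfrak{s}}<1$, the lattice translates contribute only a bounded quantity, which is absorbed by $R^\zeta\gtrsim 1$ when $\zeta<0$.
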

\begin{remark}\label{remark-kernelconvolutions}
The first condition in \eqref{e:condKernel} is necessary in order to ensure that the convolution $K_1 \ast K_2$ is well defined. The integration is restricted to a \emph{compact} space-time domain, so that we only have to deal with convergence at the singularities, but of course the constant depends on the choice of domain.
The second condition ensures that the resulting kernel actually does have a blowup at the origin. In the case $\zeta_1 + \zeta_2 +d_{\mathfrak{s}}=0$ it is in general \emph{not} true that $K_1 \ast K_2$ is bounded. However, one can obtain a bound with only a logarithmic divergence at the origin -- we will see that in our discussion of the two dimensional stochastic heat equation below. There is in general no reason to expect that  if $\zeta_1 + \zeta_2 +d_{\mathfrak{s}}>0$  we will have $K_1 \ast K_2 (0) =0$. In this case $K_{1} \ast K_{2}$ is not the correct object to work with, one must also subtract a partial Taylor expansion.
\end{remark}

We now apply this result to the integral over $z_2$ appearing in \eqref{e:integral11}. Note that if $\eta$ has compact support this integration is over a compact space-time domain (depending on the point $z$). For $d\geq 3$, (i.e. $d_{\mathfrak{s}} \geq 5$) condition \eqref{e:condKernel} is satisfied and  we can replace the convolution of the kernels by a single kernel of order $-d_{\mathfrak{s} }+4$.  In our convenient graphical notation this can be written as
\begin{equation}\label{e:graph2}
\begin{tikzpicture}[scale=0.5,baseline=-0.0cm]
\node at (-2.5,0) [var] (left){};
\node at (2.5,0) [var] (right) {};
\node at (0,0) [square](middle){};
\draw[generic] (left) to node[labl, anchor=north]{\tiny $-d_{\mathfrak{s}}+2$} (middle) ; 
\draw[generic] (middle) to node[labl, anchor=north]{\tiny $-d_{\mathfrak{s}}+2$} (right) ; 
\end{tikzpicture}
\quad \ls \quad
\begin{tikzpicture}[scale=0.5,baseline=-0.0cm]
\node at (-1.5,0) [var] (left){};
\node at (1.5,0) [var] (right) {};
\draw[generic] (left) to node[labl, anchor=north]{\tiny $-d_{\mathfrak{s}}+4$} (right) ; 
\end{tikzpicture} \;.
\end{equation}
At this stage it only remains to observe that for $\eta$ with compact support and for $\zeta > - d_{\mathfrak{s}}$
\begin{align*}
\int_{\Lambda} \int_\Lambda   \CS_{z}^\la \eta (z_1) \; \CS_{z}^\la \eta (\bar{z}_1) \|z_1 - \bar{z}_1\|_{\mathfrak{s}}^{\zeta} d z_1 \, d\bar{z}_1\ls \lambda^{\zeta} \;,
\end{align*}
and we have derived \eqref{regularitymomentbound} and therefore  the right regularity of $\<1>$ at least for $d\geq3$. For $d=2$ we are in the critical case $-d_{\mathfrak{s}} +4 =0$. According to Remark~\ref{remark-kernelconvolutions} the inequality \eqref{e:graph2} remains valid if we interpret a kernel of order $0$ as a logarithmically diverging kernel.

\medskip

For $d=1$ condition \eqref{e:condKernel} fails and we cannot use the same argument to derive the desired $\frac12 -$ regularity. This is due to the fact that in order to obtain \emph{positive} regularity, $( \<1> ,  \CS_{z}^\la \eta )$ is plainly the wrong quantity to consider. As observed in Definition~\ref{positiveholder2} rather than bounding the blowup of  local averages of $\<1>$ near $z$ we need to control how fast these local averages go to zero if a suitable \emph{polynomial approximation} (the Taylor polynomial) is subtracted. In the  case of $\<1>$ we aim to show $\frac12 -$ regularity, so we need to control how quickly $( \<1> - \<1>(z) ,  \CS_{z}^\la \eta )$ goes to zero for small $\lambda$. This observation may seem harmless, but we will encounter it again and again below. Arguably much of the complexity of the theory of regularity structures is due to the extra terms we encounter when we want to obtain bounds  on a quantity of \emph{positive} regularity (or \emph{order}). 
In this particular case it is not too difficult to modify the graphical argument to get a bound on $( \<1>- \<1>(z) ,  \CS_{z}^\la \eta )$. The integral \eqref{e:integral11} turns into
\begin{align*}
\E 
&\left[
( \<1> - \<1>(z) ,  \CS_{z}^\la \eta )^2 
\right] \
=
\int_{\Lambda}  \int_{\Lambda}\int_{\Lambda} \CS_{z}^\la \eta(z_1) \CS_{z}^\la \eta(\bar{z}_1) \\
& \qquad \qquad \times \big( K(z - z_2) -  K(z_1 - z_2)\big) \, \big(  K(z- z_2) -   K(\bar{z}_1-z_2)\big) \,  dz_2 \,dz_1 d\bar{z}_1 \; .
\end{align*}
Now we need to use the fact that not only $K$ has the right blowup near the diagonal but also its derivatives. More precisely, for every multi-index $k$ we have that  
\begin{align*}
|D^k K(z) | \ls \|z\|_{\mathfrak{s}}^{-d_{\mathfrak{s}} +2 - |k|_{\mathfrak{s}} } \;.
\end{align*}
In fact, these additional bounds are imposed in the version of Definition \ref{def: order} found in \cite{RegStr} and also appear in some statements of harmonic analysis relating to singular kernels. In \cite[Lemma 10.18]{RegStr} it is shown how the kernel $ K(z - z_2) -  K(z_1 - z_2)$ can be replaced by 
a ``Taylor approximation" $ DK(z_1 - z_2)(z-z_1) $. The factor $(z-z_1)$ can then be pulled out of the convolution integral over $z_2$ and the ``graphical algorithm" can be applied to the convolution of two copies of $DK$ which \emph{do} satisfy \eqref{e:condKernel}.

\subsection{Nonlinear equations}

For non-linear equations Duhamel's principle turns into a fixed point problem. We illustrate this for equation \eqref{e:AC} in one spatial dimension where one gets
\begin{align}
\phi(t,x) = &\int_0^t \int_{\T^1} K(t-s,x-y) \; \xi(s,y) \,dy \, ds \notag\\
& -  \int_0^t \int_{\T^1} K(t-s,x-y) \; \phi^3(s,y) \,dy \, ds \;. \label{e:Picard1}
\end{align} 
For simplicity we have dropped the mass term $m^2 \phi$ and set the initial condition to be $0$. The Schauder estimate tells us that the first term on the right hand side of \eqref{e:Picard1} is in $\Cs^{\frac12  -\ka}(\R_+ \times \T^1)$ for any $\ka$. The following theorem characterizes when we can understand products like $\phi^{3}$ classically  \refcomment{REF5} 
\def\MultValid{\cite[Prop 4.11]{RegStr}}
\def\BCD{\cite[Sec 2.6]{BCD}}
\begin{theorem}[\MultValid, see also \BCD]\label{multiplication}
Suppose that $\alpha  + \beta > 0$, then there exists a bilinear form $B(\cdot,\cdot):\Cs^{\alpha} \times \Cs^{\beta} \rightarrow \Cs^{\alpha \wedge \beta}$ such that 

\begin{itemize}
\item For smooth functions $f,g$ one has that $B(f,g)$ coincides with the point-wise product of $f$ and $g$.

\item For arbitrary $f \in \Cs^{\alpha}$, $g \in \Cs^{\beta}$ one has
\[
||B(f,g)||_{\Cs^{\alpha \wedge \beta}} 
\lesssim
||f||_{\Cs^{\alpha}}
\times
||g||_{\Cs^{\beta}}.
\] 
\end{itemize}

Additionally, if $\alpha + \beta \le 0$ then no bilinear form $B(\cdot,\cdot):\Cs^{\alpha} \times \Cs^{\beta} \rightarrow \Cs^{\alpha \wedge \beta}$ satisfying both of the above statements exists.
\end{theorem}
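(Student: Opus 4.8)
The plan is to prove the two halves separately: first the positive part (existence of the bilinear form with the stated bound for $\alpha+\beta>0$), then the negative part (impossibility when $\alpha+\beta\le 0$). For the positive part I would use a Paley--Littlewood decomposition, writing $f=\sum_i \delta_i f$ and $g=\sum_j \delta_j g$ for the (parabolically scaled) Littlewood--Paley blocks, and decompose the formal product as $fg = \sum_{i<j-1} (\delta_i f)(\delta_j g) + \sum_{|i-j|\le 1}(\delta_i f)(\delta_j g) + \sum_{i>j+1}(\delta_i f)(\delta_j g)$, i.e. the Bony paraproduct decomposition into $f\prec g$, $f\circ g$, $g\prec f$. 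The two paraproducts $f\prec g$ and $g\prec f$ are always well-defined: $f\prec g$ lands in $\Cs^\beta$ (when $\beta<0$; in $\Cs^{\alpha\wedge\beta}$ in general) using only boundedness of $f$ (or the $\Cs^\alpha$ bound) and the spectral support properties of the blocks. The only term requiring a hypothesis is the resonant product $f\circ g$; here $\delta_i f$ and $\delta_j g$ have comparable frequency, so the block $(\delta_i f)(\delta_j g)$ is spectrally supported in a ball of radius $\sim 2^i$, and summing the geometric series $\sum_i 2^{-i(\alpha+\beta)}\|f\|_{\Cs^\alpha}\|g\|_{\Cs^\beta}$ converges precisely when $\alpha+\beta>0$, giving $f\circ g\in\Cs^{\alpha+\beta}\subseteq\Cs^{\alpha\wedge\beta}$. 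Defining $B(f,g)$ to be the sum of these three pieces gives a bilinear form; bilinearity and the bound are immediate from the construction, and agreement with the pointwise product on smooth functions follows because for smooth $f,g$ all three series converge absolutely to the genuine product. One can alternatively run this argument using the test-function (wavelet) characterization from Definitions~\ref{negativeholder} and \ref{positiveholder2} instead of Littlewood--Paley blocks, but the paraproduct bookkeeping is cleanest with the frequency decomposition.

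For the negative part, the strategy is to exhibit, for any $\alpha+\beta\le 0$, explicit $f\in\Cs^\alpha$ and $g\in\Cs^\beta$ for which no continuous bilinear extension of the pointwise product can exist. The natural candidates are lacunary (Weierstrass-type) series: take $f=\sum_{n} 2^{-n\alpha} a_n$ and $g=\sum_n 2^{-n\beta} b_n$ where $a_n,b_n$ are suitably chosen oscillating bumps at frequency $2^n$ (e.g. real and imaginary parts of $e^{i 2^n x}$ times a cutoff, or Rademacher-randomized frequency blocks). Each such series lies in the respective Hölder--Besov space with controlled norm. But the formal product contains the diagonal resonant terms $\sum_n 2^{-n(\alpha+\beta)} a_n b_n$, and since $a_n b_n$ has a nonvanishing mean (a nonnegative low-frequency component of size $\sim 1$), the partial sums of this series are unbounded in $\Cs^{\alpha\wedge\beta}$ — indeed their local averages at scale $1$ diverge — when $\alpha+\beta\le 0$. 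Since the truncations of $f$ and $g$ are smooth and converge in $\Cs^\alpha\times\Cs^\beta$ while $B$ applied to the truncations does not converge in $\Cs^{\alpha\wedge\beta}$, no bounded bilinear extension can exist; in the borderline case $\alpha+\beta=0$ one gets a logarithmic divergence which still destroys boundedness. A clean way to package this is: if such $B$ existed, it would be continuous, hence commute with the smooth approximations, forcing convergence of the resonant series, contradiction.

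I expect the main obstacle to be the negative part — specifically, being careful that the counterexample genuinely obstructs \emph{every} possible bilinear extension (not merely the "obvious" one coming from Littlewood--Paley), and handling the borderline case $\alpha+\beta=0$ cleanly. This requires checking that the resonant diagonal terms cannot be cancelled by any clever redefinition, which is why one wants to phrase the contradiction via continuity of $B$ plus the smooth approximation property: those two requirements pin down $B$ on a dense set, and the counterexample shows the resulting values cannot stay bounded. A secondary technical point on the positive side is that the product of two Littlewood--Paley blocks is only \emph{almost} spectrally localized in the parabolic setting, so one must either use that $\Cs^s$ for $s$ in a bounded range can still be characterized by testing against frequency-$2^k$ bumps (allowing a slightly fattened annulus), or invoke the standard Besov-space product/paraproduct estimates adapted to the inhomogeneous parabolic scaling — both are routine but need a line of justification. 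Everything else (bilinearity, the quantitative bound, matching the pointwise product on smooth inputs) is bookkeeping once the decomposition is set up.
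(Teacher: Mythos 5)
The paper itself offers no proof of this theorem; it is stated with citations to \cite[Prop 4.11]{RegStr} and to \BCD, and later (Example~\ref{ex:integration}) it is remarked that in \cite{RegStr} the positive part is deduced from multiplication of modelled distributions together with reconstruction. Your positive part reproduces the approach of the second cited reference \BCD: Bony paraproduct decomposition $fg = f\prec g + f\circ g + g\prec f$, with only the resonant term requiring $\alpha+\beta>0$. That sketch is correct, including your caveat that in the parabolic scaling products of blocks live only in fattened anisotropic annuli; this is harmless because $\|\cdot\|_{\mathfrak{s}}$ is still subadditive, so the usual frequency-support bookkeeping goes through.

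The negative part contains a genuine, though easily repairable, gap. You assert that the smooth lacunary truncations $f_N=\sum_{n\le N}2^{-n\alpha}a_n$ and $g_N$ converge to $f,g$ in $\Cs^\alpha\times\Cs^\beta$ and then argue by continuity of a hypothetical $B$. They do not converge: for $a_n$ a bump at frequency $2^n$, the tail $\sum_{n>N}2^{-n\alpha}a_n$ has $\Cs^\alpha$-seminorm bounded below uniformly in $N$ (test at scale $\lambda\sim 2^{-(N+1)}$ and you recover block $N+1$ at full strength after the $\lambda^{-\alpha}$ normalisation). This reflects the fact that $\CB_{\infty,\infty}^\alpha$ is \emph{not} the closure of smooth functions, so smooth functions are not dense and one cannot "pin down $B$ on a dense set" in the way your final paragraph suggests. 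The fix is to not pass to the limit at all and to argue directly against the second bullet: the first bullet already forces $B(f_N,g_N)=f_Ng_N$ for all $N$; the $\Cs^\alpha$ and $\Cs^\beta$ norms of $f_N,g_N$ remain uniformly bounded in $N$ (each test function only sees the block at the matching scale); and the zero-frequency part of the resonant diagonal makes $\|f_Ng_N\|_{\Cs^{\alpha\wedge\beta}(\KK)}\gtrsim\sum_{n\le N}2^{-n(\alpha+\beta)}$, which blows up linearly in $N$ when $\alpha+\beta=0$ and geometrically when $\alpha+\beta<0$. This directly contradicts the bilinear bound, with no appeal to convergence of the truncations or to continuity of $B$ off the smooth pairs. (Alternatively, damp the amplitudes by a slowly vanishing sequence $\eps_n$ so that the truncations do converge, taking care to keep $\sum_n\eps_n^2 2^{-n(\alpha+\beta)}$ divergent; your original argument then closes, at the cost of a more delicate choice of $\eps_n$.)
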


It is then natural to treat \eqref{e:Picard1} as a fixed point problem in $\Cs^{\frac12  -\ka}(\R_+ \times \T^1)$ - by Theorem \ref{multiplication} the definition of $\phi^3$ poses no difficulty in this space. For any fixed realisation of $\xi$ there exists $T(\xi) > 0$ such that the operator 
\begin{align}
\Psi\ \colon\ \varphi \mapsto \int_0^{\cdot} \int_{\T^1} & K(\cdot-s,\cdot-y) \; \xi(s,y) \,dy \, ds \notag\\
& -  \int_0^{\cdot} \int_{\T^1} K(\cdot-s,\cdot-y) \; \phi^3(s,y) \,dy \, ds \label{e:Picard2}
\end{align}
is a contraction on bounded balls in  $\Cs^{\frac12  -\ka}([0,T] \times \T^1)$. An important observation is that $v = \phi - \<1> = -  \int_0^t \int_{\T^1} K(t-s,x-y) \; \phi^3(s,y) \,dy \, ds$ is much more regular than $\phi$ itself, in fact the  Schauder estimate implies that it is  a $\Cs^{\frac{5}{2} -\ka}$ function.
It is important to note that this argument does \textit{not} make use of the sign of the nonlinear term $- \phi^3$. Of course, this sign is essential when deriving bounds that imply non-explosion,  the existence of invariant measures for solutions, or even getting existence and uniqueness when $\T^{1}$ is replaced by $\R$.

\medskip

For $d \geq 2$ it is not so easy to solve the fixed point problem \eqref{e:Picard1} (with the one-dimensional torus $\T^1$ replaced by $\T^d$). As we have seen above the stochastic convolution $\<1>$ only takes values in the distributional spaces $\Cs^{-\frac{2-d }{2}-\ka}$ but there is no canonical way to define the mapping $\phi \mapsto \phi^3$ for $\phi \in \Cs^{\alpha}$ with $\alpha < 0$. 
We will now try to find a way around this issue in the  case of $d\geq 2$, we start by running a Picard iteration step by step. More precisely we set $\phi_0 =0$ and aim to study the behaviour of the sequence $\{\phi_n\}_{n=0}^{\infty}$ defined recursively as 
\begin{align*}
\phi_{n+1} = \Psi(\phi_n) \;,
\end{align*}
where $\Psi$ is defined in \eqref{e:Picard2} (with $\T^1$ replaced by $\T^d$). 

\medskip
With our choice of $\phi_0 =0$ the first step in the Picard iteration yields $\phi_1 = \<1>$ which is of regularity $\Cs^{\frac{2-d}{2} -\kappa}$. When going to $\phi_{2}$ we immediately run into trouble when we try to apply $\Psi$ to $\<1>$ since this requires us to define $\<1>^{3}$ for which Theorem \ref{multiplication} is of no use.

So far our analysis of \eqref{e:Picard2} could be performed entirely deterministically (occuring for a fixed realization of $\xi$) but at this point it becomes essential to use a probablistic approach. While there is no canonical way of cubing an arbitrary distribution of negative regularity,  we will now see that there are ways to define polynomials in $\<1>$ by exploiting its Gaussian structure.

\subsubsection{Construction of Wick powers} 

We will define $\<1>^3$ by approximation. The calculations will be performed in the framework of  \emph{iterated stochastic integrals}. Definition and elementary properties of these are recalled in Appendix~\ref{s:AppA}.  Let $\rho_\delta$ be  a smoothing kernel on scale $\delta$ (as was used in \eqref{e:eta_delta}) and set
 \begin{align}\label{e:int1}
 \<1>_\delta(z) := \<1> \star \rho_\delta(z) = \int_{\Lambda} K \star \rho_\delta(z - \bar{z}) \; \xi (d \bar{z})  \;.
 \end{align}
 For every $\delta >0$ the random function $\<1>_\delta(z)$ is smooth and we can define $\<1>_\delta(z)^3$ without ambiguity. To analyse the behaviour of $\<1>_\delta^3$ as $\delta \to 0$ we interpret $\<1>_{\delta}$ as a stochastic integral  against $\xi$ and apply \eqref{e:True11} which gives 
 \refcomment{REF6 - Made notation consistent, only $\rho_{\delta}$ appears, never a $\eta_{\delta}$}
 \begin{align}\label{Zdeltacubed}
 \<1>_\delta(y)^3  =& \int \CW^{(3)}_\delta(y; z_1, z_2, z_3) \xi(dz_1) \, \xi(dz_2) \, \xi(dz_3) \notag\\
 &+ 3 \int \CW^{(1)}_\delta(y; z_1) \, \xi(dz_1) \,, 
 \end{align} 
where 
\begin{align*}
\CW^{(3)}_\delta(y; z_1, z_2, z_3) = 
\prod_{j=1}^{3}
\left[
K\star \rho_\delta(y- z_j )
\right],
\end{align*}
and 
\begin{align}\label{e:W1}
\CW^{(1)}_\delta(y; z_1) = K\star \rho_\delta(y-z_1 ) \,\int  \big( K \star \rho_\delta (y-z) \big)^2  \; dz .
\end{align}
As before, we will introduce a graphical notation to analyse these expressions. In this notation \eqref{Zdeltacubed} becomes
\begin{align*}
 \<1>_\delta(y)^3  =& \<3>_\delta + 3 
 \quad
\begin{tikzpicture}[scale=0.25,baseline=-0.0cm]
\node at (0,1) [square] (above){};
\draw[generic, bend left = 60] (above) to  (0,-.8)  to (above); 
\end{tikzpicture}_\delta
\quad 
  \<1>_\delta \;.
 \end{align*} 
As before, each black dot represents an occurrence of the space-time white noise, and each line  represents an integration against a singular kernel. The black square appearing in the second term 
is a dummy variable which is integrated out.
The subscript $\delta$ appearing in all the graphs represents the fact that all singular kernels are regularised at scale $\delta$, i.e. $K$ is replaced by $K \star \rho_\delta$ which satisfies
\begin{align*}
|K\star \rho_\delta (z) |\ls \frac{1}{(\|z\|_{\mathfrak{s}}+\delta) ^{d_{\mathfrak{s}}-2}} \;.
\end{align*}

Applying the same graphical rule as above, we get
\begin{equation}\label{graphAnalysis}
\E 
\left[
( \<3>_\delta ,  \CS_{z}^\la \eta )^2 
\right]= \quad
\begin{tikzpicture}[scale=0.5,baseline=-0.0cm]
\node at (-2.5,0) [var] (left){};
\node at (2.5,0) [var] (right) {};
\node at (0,0) [square](middle){};
\node at (0,1.5) [square](up){};
\node at (0,-1.5) [square](down){};

\draw[generic] (left) to node[labl, anchor=north]{\tiny $-d_{\mathfrak{s}}+2$} (middle) ; 
\draw[generic] (middle) to node[labl, anchor=north]{\tiny $-d_{\mathfrak{s}}+2$} (right) ;

\draw[generic] (left) to node[labl, anchor=south]{\tiny $-d_{\mathfrak{s}}+2$} (up) ; 
\draw[generic] (up) to node[labl, anchor=south]{\tiny $-d_{\mathfrak{s}}+2$} (right) ; 

\draw[generic] (left) to node[labl, anchor=north]{\tiny $-d_{\mathfrak{s}}+2$} (down) ; 
\draw[generic] (down) to node[labl, anchor=north]{\tiny $-d_{\mathfrak{s}}+2$} (right) ;

\end{tikzpicture}
\;\quad \ls \quad 
\begin{tikzpicture}[scale=0.4,baseline=-0.0cm]
\node at (-2,0) [var] (left){};
\node at (2,0) [var] (right) {};

\draw[generic] (left) to node[labl, anchor=north]{\tiny $-d_{\mathfrak{s}}+4$} (right) ;

\draw[generic, bend left = 60] (left) to node[labl, anchor=south]{\tiny $-d_{\mathfrak{s}}+4$} (right) ; 

\draw[generic,  bend right =60] (left) to node[labl, anchor=north]{\tiny $-d_{\mathfrak{s}}+4$} (right) ;

\end{tikzpicture}
\quad \ls \quad 
\begin{tikzpicture}[scale=0.5,baseline=-0.0cm]
\node at (-1.5,0) [var] (left){};
\node at (1.5,0) [var] (right) {};

\draw[generic] (left) to node[labl, anchor=north]{\tiny $-3d_{\mathfrak{s}}+12$} (right); 

\end{tikzpicture} \;,
\end{equation}
uniformly in $\delta$. For $d=3$, i.e. for $d_{\mathfrak{s}}=5$, we have $-3d_{\mathfrak{s}}+12 = 3 > -d_{\mathfrak{s}}$, which yields  the uniform-in-delta bound 
\begin{align*}
\E 
\left[
( \<3>_\delta ,  \CS_{z}^\la \eta )^2 
\right] \ls \lambda^{-3} \;,
\end{align*} 
while in the case $d=2$ we get as above
\begin{align*}
\E 
\left[
( \<3>_\delta ,  \CS_{z}^\la \eta )^2 
\right] \ls |\log(\lambda)|^{3}.
\end{align*}

However the lower order \textit{It\^o correction} $3  \int \CW^{(1)}_\delta(y; z_1) \, \xi(dz_1) $ will be a problem in the $\delta \downarrow 0$ limit. The explicit form \eqref{e:W1} of the kernel $\CW^{(1)}_\delta$ shows that it can be rewritten as $3 C_\delta Z $ where $C_{\delta}$ \rf{is a constant} given by \refcomment{REF7 - See colored text, also footnote}
\begin{align}\label{wicksubtract}
C_\delta := \int  \big( K \star \rho_\delta (z) \big)^2  \; dz\;.
\end{align}
For $\delta \downarrow 0$ these $C_\delta$ diverge logarithmically for $d=2$ and like $\frac{1}{\delta}$ for $d=3$\footnote{Actually, there is a slight cheat in  \eqref{wicksubtract} because we do not specify the domain of integration. In each case $C_\delta$ does not depend on the spatial variable $y$, but if we define $C_\delta$ as an integral over $\Lambda_t$ then it actually depends on $t$ which one may consider ugly. But the integral over $\Lambda_t$ can be decomposed into a part which does not depend on $t$ and which diverges as $\delta \to 0$ (e.g. the integral over $[0,1] \times \T^d$) and a part which depends on $t$ but remains bounded in $\delta$ and which can be ignored in the renormalization procedure. There are many ways to choose $C_\delta$ in a $t$-independent way. None of these choices is canonical but all only differ by a quantity that remains bounded as $\delta \to 0$.  }. To overcome this problem we simply remove the diverging term $3 C_\delta Z$. From our second moment bound, the Nelson estimate (see \eqref{e:Nelson} in Appendix~\ref{s:AppA}), and Theorem \ref{Kolmogorov} one can then show that the limit
\begin{align*}
\<3>\ :=
\lim_{\delta \downarrow 0}
\left(
\<1>_\delta^3 - 3 C_\delta \<1>_\delta
\right)
% =
%\lim_{\delta \downarrow 0} 
%\int \CW^{(3)}_\delta(y; z_1, z_2, z_3) \xi(dz_1) \, \xi(dz_2) \, \xi(dz_3) \;.
\end{align*}
exists as random elements of $\Cs^{-\frac32 -\ka}$ for $d=3$ and as random elements of $\Cs^{-\ka}$ for $d=2$, where the convergence holds for every stochastic $L^p$ space. The subtraction implemented above is called \emph{Wick renormalization}, and the object $\<3>$ is called the third Wick power of $\<1>$ and is sometimes denoted by  $\colon Z^3 \colon $  (we could write $: \<1>^3:$ to be more consistent with our graphical notation). 

\medskip
The general recipe for defining Wick powers is as follows: to define $\colon Z^{n} \colon$ one applies the $n$-th order analog of the identity \eqref{e:True11} to $Z_{\delta}^{n}$, drops \emph{all} lower-order It\^o corrections \footnote{By lower order we mean all the terms involving strictly less than $n$ factors of $\xi$.} to get an object we denote $:Z_{\delta}^{n}:$, and then takes the $\delta \downarrow 0$ limit. The graphical analysis of $n$-th Wick powers is very similar to \eqref{graphAnalysis}, the only difference being that there are $n$ edges connecting the left and right vertices. 
In this way one obtains a singular kernel of order $-nd_{\mathfrak{s}}+4n$ as a final result. Hence for $d=2$ the blowup of this  kernel on the diagonal can be bounded by $|\log(z_1 - \bar{z_1})|^n$ which is integrable for every $n$. For $d=3$ however we get a polynomial blowup $|z_1 - \bar{z}_1|^{-n}$ which fails to be integrable for $n\geq 5$. For $d=2$ we can define arbitrary Wick powers of $Z= \<1>$ while for $d=3$ we can only define Wick powers up to $n=4$.

\begin{remark}
Our  reasoning shows that  in the three dimensional case we can define Wick powers up to order $n=4$ as space-time distributions. It is however \emph{not} possible to evaluate these distribution for fixed $t$ in the cases $n\geq 3$. Only space time averages are well defined.
\end{remark}

\subsubsection{Back to the Picard iteration}
We now return to our Picard iteration, \rf{still working formally}. 
The process $\colon \<1>^3 \colon$ is denoted by $ \<3>$, where again each dot represents an occurrence of white noise and each line represents one integration against a kernel. The fact that they are merged at the bottom corresponds to multiplication. For now we will just replace the $Z^{3}$ that would have appeared in $\phi_{2}$ with $\<3>$ so that we have
\begin{align*}
\phi_2 = \<1> -  \<30> \;
\end{align*}
where $\<30> = K \ast \<3>$. In the next step of the Picard iteration we would get
\begin{align*}
\phi_3 = \<1> - K \star\Big( \<3> - 3 \<1>^{2}  \, K \ast (\<3>)  + 3 \<1> \big( K \ast \<3>\big)^2 - \big( K \ast \<3>\big)^3  \Big) \,.
\end{align*}
\refcomment{REF8}
\rf{If we restrict to the case $d=2$} then almost all of these terms are well defined. Indeed, according to the Schauder estimates $K \star \<3>$ is a function of class $\Cs^{2- \ka}$ for any $\ka>0$. And this is enough to define most of the products. The only term that causes a problem is 
the term $\<1>^{2} = Z^2$, however the corresponding Wick power $\<2> := \lim_{\delta \downarrow 0} (\<1>_\delta^2 -C_\delta)$ is well defined.

\medskip
It turns out that these are all the terms that need to be renormalised when $d=2$, and that after modifying these first few steps, the Picard iteration can actually be closed. Of course we have been working somewhat formally here, instead of replacing certain powers $Z^{n}$ with Wick powers $\colon Z^{n} \colon$ one should instead modify the equation so it automatically generates the needed Wick renormalizations. In the next section we will explain in more detail, how the above method of treating $\Phi^{4}_{2}$ can be implemented and we explain why this approach fails for $\Phi^4_3$.

\section{Lecture 3} 
\label{s:l3}
The renormalization and  the Picard iteration for $\Phi^{4}_{2}$ were performed in a very elegant way in \cite{DPD03} by a method we call the Da Prato - Debussche argument. We start this lecture by discussing this argument and then sketch why it fails for $\Phi^{4}_{3}$. This motivates us to turn to a more robust approach, the theory of regularity structures \cite{RegStr}.
In particular we will introduce some of basic objects of the theory: \textit{regularity structures}, \textit{models}, and \textit{modelled distributions}. 

 \subsection{The Da Prato - Debussche Argument}\label{dpdsec}
 At the end of Lecture 2 we expanded the solution $\phi$ of the $\Phi_{2}^{4}$ equation in terms of objects built out of the linear solution $\<1>$. A key observation is that the most singular term in our partial expansion of $\phi$ was $\<1>$, if we write $\phi = \<1> + v$ then we expect the remainder $v$ to be of better regularity.
 While we are unable to directly treat $\Phi^{4}_{2}$ equation as fixed point problem in a $\Cs^{\alpha}$ space, it turns out that one can renormalize the original equation so that it generates Wick powers of $\<1>$ and then solve a fixed point equation in a nicer space for the remainder $v$. As we already announced in \eqref{e:less_naive1}, the renormalized equation is
\begin{equation}\label{e:wickrenormalizedphi4}
\partial_{t} \phi_{\delta} =
\Delta \phi_{\delta}
-
(\phi_{\delta}^{3}
-
3C_{\delta}
\phi_{\delta}
)
+\xi_{\delta},
\end{equation}
where $C_{\delta}$ is given by \eqref{wicksubtract}. Now we write $\phi_{\delta} = \<1>_{\delta} + v_{\delta}$ where $\<1>_{\delta}$ is given by \eqref{e:int1} so that it solves $\partial_{t} \<1>_{\delta} = \Delta \<1>_{\delta} + \xi_{\delta}$. Subtracting this linear equation from \eqref{e:wickrenormalizedphi4} gives us \refcomment{REF9}
\begin{equation}\label{dapratoremainder}
\begin{split}
\partial_{t} v_{\delta}
=&
\Delta v_{\delta}
-
\left(
(v_{\delta} + \<1>_{\delta})^{3} 
-
3C_{\delta}(v_{\delta} + \<1>_{\delta})
\right)\\
=&
\Delta v_{\delta}
-
v_{\delta}^{3}
-
3
\<1>_{\delta}
\rf{v_{\delta}^{2}}
-
3
(
\<1>_{\delta}^{2}
-
C_{\delta}
)
v_{\delta}
-
(\<1>_{\delta}^{3}
-
3C_{\delta}
\<1>_{\delta}
).
\end{split}
\end{equation}
This equation looks more promising since the rough driving noise $\xi_{\delta}$ has dropped out and from the previous lecture we know that the polynomials in $\<1>_{\delta}$ appearing above converge in probability to the corresponding Wick powers as $\delta \rightarrow 0$. We pass to the limit and try to solve the fixed point equation
\begin{equation}\label{dapratofixedpointeqn}
v
=
K \star
\left[
-
v^{3}
-
3
\<1>
v^{2}
-
3
\<2>
v
-
\<3>
\right].
\end{equation}
Recall that $\<1>, \<2>,$ and $\<3>$ are in $\Cs^{-\kappa}$ for any (small) $\kappa>0$, when $d=2$. Using Theorem \ref{Schaudestimate} and Theorem \ref{multiplication} we can formulate \eqref{dapratofixedpointeqn} in $\Cs^{2 - \kappa}$, the key point being that all products on the right hand side of \eqref{dapratofixedpointeqn} make sense in $\Cs^{2 - \kappa}$. 
By exploiting the sign of $v^{3}$ in \eqref{dapratoremainder} one can also show global in time existence for $v$ (and therefore for $\phi$ as well), see \cite{JCH2}.
\refcomment{We added this remark.}
\begin{remark}
Remarkably a similar argument was originally discovered by Bourgain in the context of the two dimensional non-linear Schr\"odinger equation with 
defocussing cubic non-linearity. More precisely, in \cite{bourgain} Bourgain studied the deterministic PDE
\begin{align*}
i \partial_t \phi  = \Delta \phi - \phi^3.
\end{align*}
When written in the mild form 
\begin{align}\label{mild-Schrodinger}
\phi(t) = e^{-i \Delta t} \phi(0) - \int_0^t e^{-(t-s) i \Delta} \phi^3(s) ds
\end{align}
it resembles \eqref{e:Picard1} with the important difference that unlike the heat semigroup the Schr\"odinger semigroup $e^{-i t \Delta}$ does not improve differentiability.
Bourgain studied \eqref{mild-Schrodinger} when the initial datum $\phi(0)$ is a complex Gaussian free field on the torus in which case $z(t) = e^{-i \Delta t} \phi(0)$ is a Gaussian 
evolution with regularity properties identical to those of the process $\<1>$. He then performed the same Wick renormalisation for the square and the cube of $z(t)$
and showed that the equation for the remainder $v = \phi - z$  can be solved as a fixed point problem in a space of function of positive differentiability. This is a remarkable result 
because, as said above, the Schr\"odinger semigroup does not usually improve regularity. See e.g. \cite{AndreaGigliola} for  recent work in this direction.
\end{remark}

\medskip

The above argument does not apply for $\Phi^{4}_{3}$. In this case $\<3> \in \Cs^{-3/2 - \kappa}$ so we expect the remainder $v$ to be no better than $\Cs^{1/2 - \kappa}$. Since $\<2> \in \Cs^{-1 - \kappa}$ we fall far short of the regularity needed to define the product $\<2>v$.
One might try to defeat this obstacle by pushing the expansion further, writing $\phi = \<1> -  \<30> + v$ and solving for $v$. The new fixed point equation is
\begin{equation*}
v
=
K \star
\left[
-
v^{3}
-
3
\<1>
v^{2}
-
3
\<2>
v
-
3
( \<30>)^{2}
v
-3
( \<30>)v^{2}\\
-
6
\<1> 
( \<30>)
v
- 3 \<2> ( \<30>) - 3 \<1> ( \<30>)^{2}.
\right]
\end{equation*}
%where we have set $R = 3 \<2> ( \<30>) + 3 \<1> ( \<30>)^{2}$. 
Since we have pushed the expansion further we do not see the term $\<3>$ anymore. \rf{However we are now confronted} with the product $\<2>  \<30>$ which cannot be defined using Theorem~\ref{multiplication} since $ \<30> \in \Cs^{1/2 - \kappa}$ and $\<2> \in \Cs^{-1 - \kappa}$. In fact, this ill-defined product is the reason for the second logarithimically diverging renormalization constant for $\Phi^{4}_{3}$. 
\rf{But after defining} this product by inserting another renormalization constant by hand, we are still unable to close the Picard iteration. \refcomment{REF10}
The real problematic term is the product $\<2> v$ which creates a vicious circle of difficulty. If we could define the product $\<2> v$ then it would have regularity $\Cs^{-1 - \kappa}$, this means at best one could have $v \in \Cs^{1 - \kappa}$. However, this is not enough regularity to define the product $\<2> v$ and so we are unable to close the fixed point argument.

\subsection{Regularity Structures}
The Da Prato - Debussche argument for $\Phi^{4}_{2}$ consisted of using stochastic analysis to control a finite number of explicit objects built out of the linear solution followed by the  application of a completely deterministic fixed point argument in order to solve for a relatively smooth remainder term.
For $\Phi^{4}_{3}$ we saw that regardless of how far one expands $\phi$, writing 
\begin{equation}\label{perturbexp}
\phi = \<1> - \<30>  + \dots + v,
\end{equation}
the product $\<2>v$ always prevents us from formulating a fixed point argument for $v$. We cannot make the remainder $v$ arbitrarily smooth just by pushing the expansion further.

\medskip
In the theory of regularity structure we will again postulate an expansion for $\phi$ which looks more like
\begin{equation}\label{roughreg}
\Phi(z) 
= 
\Phi_{\<s1>}(z)\<s1> 
+ \Phi_{\<s30>}(z)\<s30> + \dots 
+
\Phi_{\symbol{\mathbf{1}}}(z) \symbol{\mathbf{1}}.
\end{equation}
One immediately visible difference is that the expansion \eqref{roughreg} allows varying coefficients in front of various stochastic objects. Instead of solving a fixed point equation for a single function $v$, we will instead solve a fixed point equation for a family of functions $(\Phi_{\<s1>}, \Phi_{\<s30>},\dots,\Phi_{\symbol{\mathbf{1}}})$. We will also be interested in something called
the ``order''\footnote{What we call order is referred to as homogeneity by  Hairer in \cite{hairer2015introduction} and \cite{RegStr}.}
of objects $\<s1>,\dots,\<s30>, \dots$ in \eqref{roughreg} instead of just their regularity. The order of an object describes how much we expect it to vanish or blow up when evaluated at small scales, one of the main goals of this section is to clarify the concept of ``order''. Finally, while the objects $\<s1>,\dots,\<s30>, \dots$ appearing in \eqref{roughreg} are related to the corresponding stochastic objects in \eqref{perturbexp}, they will turn out to be a totally different sort of object so we have distinguished them by coloring them blue.  

\medskip
In \cite{RegStr}  the fixed point problem associated with SPDE
is solved in an abstract setting. The rest of this lecture will be devoted to introducing the hierarchy of objects that populate this abstract setting and we begin by defining the most basic object.
\def\RegStrDef{\cite[Def. 2.1]{RegStr}}
\begin{definition}[\RegStrDef]\label{Regstrucdef}
A regularity structure $\mathcal{T}$ consists of a triple $(A,T,G)$.
\begin{itemize}
\item $A \subset \R$ is an indexing set which is bounded from below and has no accumulation points.
\item $T = \bigoplus_{\alpha \in A} T_{\alpha}$ is a graded vector space where each $T_{\alpha}$ is a finite dimensional\footnote{Actually, in \cite{RegStr} these spaces are note required to be finite-dimensional, but in most examples we are aware of they are even of very low dimension.} real vector space which comes with a distinguished basis and norm.
\item $G$ is a family of linear transformations on $T$ with the property that for every $\Gamma \in G$, every $\alpha \in A$, and every $\tau \in T_{\alpha}$ one has
\begin{equ}[e:basicRel]
\Gamma \tau - \tau \in T^{-}_{\alpha},\;
\end{equ}
where we have set
\[
T^{-}_{\alpha} 
:= 
\bigoplus_{\beta < \alpha} T_{\beta}.
\]
Additionally we require that $G$ form a group under composition.
\end{itemize}
\end{definition}

In the triple $(A,T,G)$ the set $A$ is an indexing set that lists the orders of the objects that we allow to appear in our expansions. We will always assume $0 \in A$. For any $\alpha \in A $ an element $\tau \in T_{\alpha}$ should be thought of as an abstract symbol that represents an object of order $\alpha$ - for such a ``homogenous'' element $\tau$ we write $|\tau| = \alpha$. We denote by $||\cdot||_{\alpha}$ the norm on $T_{\alpha}$
\footnote{Since all norms on such $T_{\alpha}$ are equivalent we may not fix a specific one when defining a regularity structure.}.
For general $\tau \in T$ we set $||\tau||_{\alpha} := ||\mathcal{Q}_{\alpha} \tau ||_{\alpha}$ where $\mathcal{Q}_{\alpha}:T \rightarrow T_{\alpha}$ is just projection onto the $\alpha$-component.

\medskip
Returning to \eqref{roughreg}, the objects $\<s1>$ and $\<s30>$ no longer represent fixed space-time distributions but instead are abstract symbols which are homogenous elements of $T$. The object $\Phi$ in \eqref{roughreg} is actually a map $\Phi:\R^{d+1} \rightarrow T$.
The family of linear transformations $G$, called the \emph{structure group}, will play an important role in the theory but we will introduce it slowly as we introduce examples of increasing complexity \footnote{ In practice we will not explicitly define the \emph{entire} structure group $G$ when we encounter more complex regularity structures $\mathcal{T}$, only a small subgroup germane to our discussion.}.

\subsection{An abstract generalization of Taylor expansions}\label{abstractpoly}
While \eqref{perturbexp} is a perturbative expansion generated by Picard iteration, one should think of \eqref{roughreg} as a  jet
\footnote{More specifically, a collection of Taylor expansions indexed by space-time ``base-points''.},
at each space-time point this expansion represents the solution as a linear combination of objects that vanish (or blow up) at controlled rates when evaluated near that space-time point. We will now show how the actual Taylor expansions familiar to a calculus student can be formulated in the theory of regularity structures.

\medskip
We claim that the statement a function $f:\R^{d+1} \rightarrow \R$ belongs to $\Cs^{\alpha}(\R^{d+1})$ for some $\alpha > 0$ is equivalent to requiring that (i) for any multi-index $j$ with $|j|_{\mathfrak{s}} \le \alpha$, $D^{j}f$ exists and is continuous, and (ii) for every $z \in \R^{d+1}$ one has the bound

\begin{equation}\label{e:Lipschitz1}
\sup_{ 
\substack{
 \bar{z} 
 \\ ||\bar{z} - z||_{\mathfrak{s}} \le 1
 }
 }
\left| f(\bar{z}) -  \sum_{|k|_{\mathfrak{s}} \leq \alpha} \frac{1}{k!}D^{k}f(z) (\bar{z}-z)^{k}\right|  \le C ||\bar{z} - z||_{\mathfrak{s}}^{\alpha}\;.  
\end{equation}
It is not hard to check that together the conditions (i) and (ii) are equivalent to the requirements of Definition \ref{positiveholder2}. Moreover, estimate \eqref{e:Lipschitz1} implies that for any multi-index $j$ with $|j|_{\mathfrak{s}} \le \alpha$ one has the bound
\begin{equation}\label{e:Lipschitz2}
\sup_{ 
\substack{
 \bar{z} 
 \\ 0 < ||\bar{z} - z||_{\mathfrak{s}} \le 1
 }
 }
\left| D^{j}f(\bar{z}) -  \sum_{|k|_{\mathfrak{s}} \leq \alpha - |j|_{\mathfrak{s}}} \frac{1}{k!}D^{j + k}f(z) (\bar{z}-z)^{k}\right|  \le C ||\bar{z} - z||_{\mathfrak{s}}^{\alpha - |j|_{\mathfrak{s}}}\;.
\end{equation}

Our point is that the statement $f \in \Cs^{\alpha}(\R^{d+1})$ corresponds to the existence of a family of polynomials indexed by $\R^{d+1}$ which do a sufficiently good job of describing $f$ locally. To implement this in our setting we will define a regularity structure, denoted \rf{$\bar{\mathcal{T}}$}, which we call the regularity structure of abstract polynomials. 
\refcomment{REF11}
\medskip
More precisely, \rf{$\bar{\mathcal{T}} = (A,T,G)$} where $A = \mathbf{N}$ and $T$ is the algebra of polynomials in the commuting indeterminates $\symbol{\mathbf{X}_{0}},\symbol{\mathbf{X}_{1}}, \dots, \symbol{\mathbf{X}_{d}}$. We write $\symbol{\mathbf{X}}$ for the associated $(d+1)$-dimensional vector indeterminant. For any $l \in \mathbf{N}$ we set $T_{l}$ to be the Banach space whose basis is the set of monomials $\Xk$ of parabolic degree $l$ (i.e. $|k|_{\mathfrak{s}} = l$). For a general $\tau \in T$ and monomial $\Xk$ we denote by $\langle \tau, \Xk \rangle$ the coefficient of $\Xk$ in the expansion of $\tau$. We will explicitly describe the structure group $G$ for $\bar{\TT}$ a little later.

\medskip

Given any $f \in \Cs^{\alpha}$ we can associate with it the function $F:\R^{d+1} \rightarrow T$ given by
\begin{equation}\label{liftofholder}
F(z) = 
\sum_{ |k|_{\mathfrak{s}} \le \alpha }
\frac{1}{k!}
D^{k}f(z)
\Xk.
\end{equation}
The object $F$ should be thought of as a lift, or enhancement, of $f$. The original function is easily recovered since $f(z) = \langle F(z), \symbol{\mathbf{1}} \rangle$, where we have used the notation $\symbol{\mathbf{1}} :=\symbol{ \mathbf{X}^{0}}$. However, at each space time point $F$ also provides additional local information about $f$.

\medskip
Next, we make a connection between the abstract polynomials of $\bar{\TT}$ and concrete polynomials on $\R^{d+1}$. We define a family of linear maps $\{ \Pi_{z} \}_{z \in \R^{d+1}}$ where for any $z \in \R^{d+1}$ one has $\Pi_{z}:T \rightarrow \mathcal{S}'(\R^{d+1})$. The map $\Pi_{z}$ takes an element $\tau \in T$ and returns a concrete space-time distribution which is ``based at $z$''. In this section these space-time distributions will just be polynomials so we can specify them pointwise, for any $z \in \R^{d+1}$ and multi-index $k$ we set
\[
\left(
\Pi_{z}
\Xk
\right)(\bar{z})
:=\ 
(\bar{z} - z)^{k},
\]
where $\bar{z}$ is just a dummy variable. We then extend $\Pi_{z}$ to all of $T$ by linearity. The concrete Taylor polynomial for $f$ with base point $z$ is then given by
$\left(
\Pi_{z}F(z)
\right)(\cdot)$.

\medskip
A key ingredient of the theory of regularity structures  is a notion of smoothness for space-time distributions that are classically thought of as very singular. This requires lifting a space-time distribution to a family of local expansions at each space-time point, the notion of smoothness will then be enforced by comparing these local expansions at nearby space-time points. We make this analogy more concrete by showing how conditions \eqref{e:Lipschitz1} and \eqref{e:Lipschitz2} on $f$ can be elegantly encoded in terms of more abstract conditions on $F$.

\medskip
Directly comparing $F(z)$ and $F(z')$ for two close space-time points $z$ and $z'$ is quite unnatural since each of these local expansions are based at different space-time points. What we need is an analog of the parallel transport operation of differential geometry, we must transport a local description at one space-time point to another space-time point. 
For every pair $x,y \in \R^{d+1}$ we will define a linear map $\Gamma_{xy}:T \mapsto T$ which plays the role of parallel transport. $\Gamma_{xy}$ takes something which is written as a local object at the space-time point $y$ and ``transports'' it to $x$. This property is encoded in the algebraic relation
\begin{equation}\label{pigammaalg}
\Pi_{y}\tau = \Pi_{x} \Gamma_{xy} \tau 
\textnormal{ for all }\tau \in T,\ x,y \in \R^{d+1}.
\end{equation}
The structure group $G$ will provide all the operators $\Gamma_{xy}$. For any $h \in \R^{d+1}$ we set
\[
\Gamma_{h}\Xk := (\symbol{\mathbf{X}} - h)^{k},
\]
and we extend this definition to all of $T$ by linearity. $G$ is defined to be the collection of operators $\{\Gamma_{h}\}_{h \in \R^{d+1}}$, one can easily check this satisfies the necessary conditions (and that $G$ is isomorphic to $\R^{d+1}$ as a group). If $\Gamma_{xy} := \Gamma_{y-x}$ then \eqref{pigammaalg} is satisfied.

With all this in place we can give the following characterization of $\Cs^{\alpha}$ spaces for $\alpha \ge 0$. 

\begin{theorem}\label{modelledholderfunctions}
Let $\bar{\mathcal{T}} = (A,T,G)$ be the regularity structure of abstract polynomials (in $d+1$ components). Suppose that $\alpha > 0$. Then a function $f:\R^{d+1} \rightarrow \R$ is a member of $\Cs^{\alpha}$ (as in Definition \ref{positiveholder2}) if and only if there exists a function $F:\R^{d+1} \rightarrow T^{-}_{\alpha}$ such that $\langle F(z), \mathbf{1} \rangle = f(z)$ and for every compact set $\mathfrak{K} \subset \R^{d+1}$ and every $\beta \in A$ with $\beta < \alpha$ one has \refcomment{REF12}
\begin{equation}\label{modeledholderbound}
\sup_{x \in \mathfrak{K}} 
{\|F(x)\|_\beta} 
+
\sup_{
\substack{
x,y \in \mathfrak{K}\\
x \not = y}
}
\frac{
||F(x) - \Gamma_{x y}F(y)||_{\beta}
}
{||x-y||_{\mathfrak{s}}^{\alpha - \beta}}
< \infty,
\end{equation}
\end{theorem}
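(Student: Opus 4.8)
The plan is to prove both directions by comparing the abstract conditions in \eqref{modeledholderbound} with the concrete conditions \eqref{e:Lipschitz1}–\eqref{e:Lipschitz2} that were already shown to characterize $\Cs^{\alpha}$. Since $A = \mathbf{N}$, the condition $\beta < \alpha$ ranges over integers $\beta \in \{0,1,\dots,\lfloor \alpha \rfloor\}$, and $T^{-}_{\alpha} = \bigoplus_{l \le \alpha} T_{l}$ is spanned by monomials $\symbol{\mathbf{X}^{k}}$ with $|k|_{\mathfrak{s}} \le \alpha$. The first observation is purely bookkeeping: any $F:\R^{d+1} \to T^{-}_{\alpha}$ with $\langle F(z), \mathbf{1}\rangle = f(z)$ can be written as $F(z) = \sum_{|k|_{\mathfrak{s}} \le \alpha} f_{k}(z) \symbol{\mathbf{X}^{k}}$ with $f_{0} = f$, and the projection onto $T_{\beta}$ picks out exactly the terms with $|k|_{\mathfrak{s}} = \beta$. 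Using the explicit formula $\Gamma_{h}\symbol{\mathbf{X}^{k}} = (\symbol{\mathbf{X}} - h)^{k}$ and expanding by the binomial theorem, one computes
\[
\Gamma_{xy}F(y)
=
\sum_{|k|_{\mathfrak{s}} \le \alpha} f_{k}(y)\, (\symbol{\mathbf{X}} - (y-x))^{k}
=
\sum_{|j|_{\mathfrak{s}} \le \alpha}
\Big(
\sum_{|k|_{\mathfrak{s}} \le \alpha,\ k \ge j}
\binom{k}{j} f_{k}(y)\, (x - y)^{k-j}
\Big)
\symbol{\mathbf{X}^{j}}.
\]
Hence the $\symbol{\mathbf{X}^{j}}$-component of $F(x) - \Gamma_{xy}F(y)$ equals $f_{j}(x) - \sum_{k \ge j} \binom{k}{j} f_{k}(y)(x-y)^{k-j}$, and \eqref{modeledholderbound} says precisely that this is $O(\|x-y\|_{\mathfrak{s}}^{\alpha - |j|_{\mathfrak{s}}})$ uniformly on compacts, together with local boundedness of each $f_{j}$.

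For the forward direction, given $f \in \Cs^{\alpha}$, I would \emph{define} $F$ by the lift \eqref{liftofholder}, i.e. $f_{k}(z) = \frac{1}{k!}D^{k}f(z)$; these exist and are continuous by condition (i) of the characterization preceding the theorem, so each component is locally bounded, giving the first term in \eqref{modeledholderbound}. For the second term, note that $\sum_{k \ge j}\binom{k}{j}\frac{1}{k!}D^{k}f(y)(x-y)^{k-j} = \sum_{|m|_{\mathfrak{s}} \le \alpha - |j|_{\mathfrak{s}}} \frac{1}{m!} D^{j+m}f(y)(x-y)^{m}$ after re-indexing $m = k-j$ and using $\binom{k}{j}/k! = 1/(j!\,m!)$ — wait, more precisely $\frac{1}{k!}\binom{k}{j} = \frac{1}{j!\,(k-j)!}$, so the $j$-component of $F(x) - \Gamma_{xy}F(y)$ is $\frac{1}{j!}\big(D^{j}f(x) - \sum_{|m|_{\mathfrak{s}} \le \alpha - |j|_{\mathfrak{s}}} \frac{1}{m!}D^{j+m}f(y)(x-y)^{m}\big)$, which is exactly $\frac{1}{j!}$ times the quantity bounded in \eqref{e:Lipschitz2} (with the roles of the two points swapped, which only affects constants). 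So the forward direction follows directly from \eqref{e:Lipschitz2}, which was already established.

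For the converse, suppose $F$ satisfies \eqref{modeledholderbound}. Taking $\beta = 0$ gives $|f(x) - \sum_{|k|_{\mathfrak{s}} \le \alpha} f_{k}(y)(x-y)^{k}| \lesssim \|x-y\|_{\mathfrak{s}}^{\alpha}$, which is the shape of \eqref{e:Lipschitz1} \emph{provided} we identify $f_{k}(y)$ with $\frac{1}{k!}D^{k}f(y)$. The main work, and the main obstacle, is therefore to show that the components $f_{k}$ are not arbitrary but are forced to be the derivatives of $f$: one must bootstrap regularity from the top order downward. The cleanest route is an induction on $|k|_{\mathfrak{s}}$ from high to low: the highest-order components (those with $|k|_{\mathfrak{s}}$ equal to the largest integer $\le \alpha$) are shown to be $(\alpha - |k|_{\mathfrak{s}})$-Hölder continuous directly from their own bound in \eqref{modeledholderbound} with $\beta = |k|_{\mathfrak{s}}$ (the transport acts trivially on the top component), and then, descending in order, the bound for $f_{k}$ combined with the already-established continuity/differentiability of the higher $f_{k'}$ identifies $f_{k}$'s increments with a polynomial in $(x-y)$ whose coefficients are the $f_{k+m}$; a standard argument (e.g. the converse part of Taylor's theorem, or Hairer's reconstruction-type lemma for the polynomial regularity structure) then shows $f_{k}$ is differentiable with $D^{m}f_{k} = (\text{appropriate multiple of}) f_{k+m}$, and in particular $f_{k} = \frac{1}{k!}D^{k}f_{0} = \frac{1}{k!}D^{k}f$. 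Once this identification is in place, \eqref{modeledholderbound} with $\beta = 0$ becomes exactly \eqref{e:Lipschitz1}, and conditions (i)–(ii) hold, so $f \in \Cs^{\alpha}$ by the characterization already recorded. I expect the descending induction — making rigorous that an approximate-Taylor-expansion estimate forces genuine differentiability of each coefficient — to be the only non-routine step; everything else is binomial bookkeeping.
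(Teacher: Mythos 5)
Your forward direction (take $F$ to be the Taylor lift \eqref{liftofholder}, expand the $\beta$-component of $F(x) - \Gamma_{xy}F(y)$ by the binomial theorem, and match it against \eqref{e:Lipschitz2}) is exactly the paper's argument.

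For the converse you take a genuinely different and substantially heavier route. The paper's route is short because Definition~\ref{positiveholder2} does \emph{not} require the approximating polynomials $\{P_z\}$ to be Taylor polynomials of $f$; it only requires the existence of \emph{some} family of polynomials of parabolic degree $<\alpha$ satisfying the scaling bound. Given $F$ satisfying \eqref{modeledholderbound}, set $P_z := \Pi_z F(z)$. Your own computation of the $\beta=0$ component shows $|f(x) - (\Pi_z F(z))(x)| \lesssim \|x-z\|_{\mathfrak s}^{\alpha}$ uniformly on compacts. Since every $\eta \in B_0$ is supported on the unit ball with $\|\eta\|_{C^0}\le 1$, the rescaled function $\CS_z^\lambda\eta$ has $L^1$-norm bounded uniformly in $\lambda$ and support in the parabolic $\lambda$-ball around $z$, so $|\langle f - P_z, \CS_z^\lambda\eta\rangle| \lesssim \lambda^\alpha$. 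That is already the requirement of Definition~\ref{positiveholder2}, so $f \in \Cs^\alpha$; the $\beta>0$ components of \eqref{modeledholderbound} are never used in this direction. Your descending induction, which identifies each $f_k$ with $\frac{1}{k!}D^k f$ via a converse-of-Taylor's-theorem argument, is correct as far as it goes and even proves more (uniqueness of the lift $F$), but it hinges on a nontrivial auxiliary lemma that you flag rather than prove, and it reroutes the argument through the pointwise characterization (i)--(ii) when the Besov-style definition already does all the work directly. The key structural point, which is worth noting because it recurs when one later proves the Reconstruction Theorem, is that the definition of $\Cs^\alpha$ used here is tailored precisely so that $\{\Pi_z F(z)\}_z$ being a good family of local approximants is immediate from the $\beta=0$ bound.
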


For checking that $f \in \Cs^{\alpha}$ implies \eqref{modeledholderbound} one defines $F$ as in \eqref{liftofholder} and check that the case of $\beta = 0$ encodes \eqref{e:Lipschitz1} and more generally the case of $\beta = l$ encodes \eqref{e:Lipschitz2} where $|j|_{\mathfrak{s}} = l$. For example, if $\beta = 1$ (and $\alpha > 1$) then one can check that \refcomment{REF13 - Parenthesis and exponent in first line corrected, along with changing a $k$ into a $k-j$ in the second line.}
\begin{equation*}
\begin{split}
\mathcal{Q}_{1} 
\Gamma_{x y}F(y)
=&\ 
\mathcal{Q}_{1}
\left(
\sum_{|k|_{\mathfrak{s}} \le \alpha} 
\frac{1}{k!}
D^{k}f(y)
\left(\symbol{\mathbf{X}} - (y-x)\right)^{k}
\right)\\
=&\ 
\mathcal{Q}_{1}
\left(
\sum_{|k|_{\mathfrak{s}} \le \alpha} 
\sum_{ j \le k}
\frac{1}{k!}
D^{k}f(y)
\frac{k!}{j!(k-j)!}
\Xj
(x-y)^{\rf{k - j}}
\right)\\
=&\ 
\sum_{|j|_{\mathfrak{s}} = 1}
\Xj
\sum_{|k|_{\mathfrak{s}} \le \alpha - 1}
\frac{1}{k!}
D^{j+k}f(y)
(x-y)^{k}.
\end{split}
\end{equation*}
We can assume that the $||\cdot||_{1}$ norm on $T_{1}$ is an $\ell_{1}$ type norm (with respect to the basis of monomials $\Xj$ with $|j|_{\mathfrak{s}} = 1$) and so we have
\begin{equation*}
||F(x) - \Gamma_{x y}F(y)||_{1}
=
\sum_{|j|_{\mathfrak{s}} = 1}
\left|
D^{j}f(x) - 
\sum_{|k|_{\mathfrak{s}} \le \alpha - 1}
\frac{1}{k!}
D^{j+k}f(y)
(x-y)^{k}
\right|
\end{equation*}
which combined with  \eqref{e:Lipschitz2} gives us \eqref{modeledholderbound}for all multi-indices $j$ with $|j|_{\mathfrak{s}} = 1$.
Showing that the existence of such an $F$ implies $f \in \Cs^{\alpha}$ is quite similar, \eqref{modeledholderbound} implies that $\{\Pi_{z}F(z)\}_{z \in \R^{d+1}}$ is a family of sufficiently good polynomial approximations for $f$.

\subsection{Models and Modelled Distributions}
We now give a more general and axiomatic description for some of the new objects we encountered in the last section. The first concept is that of a \emph{model} which is what allowed us to go from abstract symbols in a regularity structure to concrete space-time distributions.

\begin{definition}\label{defofmodel}
Let  $\TT = (A,T,G)$ be a regularity structure. A \textit{model} for $\TT$ on $\R^{d+1}$ consists of a pair $(\Pi,\Gamma)$ where
\begin{itemize}
\item $\Gamma$ is a map $\Gamma \colon \R^{d+1}\times \R^{d+1} \to G$ which we write $(x,y) \mapsto \Gamma_{xy}$. We require that $\Gamma_{xx} = I$ and $\Gamma_{xy}\, \Gamma_{yz} = \Gamma_{xz}$ for all $x,y,z \in \R^{d+1}$.
\item $\Pi = \{ \Pi_{x}\}_{x \in \R^{d+1}}$ is a family of linear maps $\Pi_x \colon T \to \CS'(\R^{d+1})$. 

\item One has the algebraic relation
\begin{equation}\label{algconditionmodel}
\Pi_y = \Pi_x \Gamma_{xy} \textnormal{ for all }
x,y \in \R^{d+1}.
\end{equation}
\end{itemize}
Finally, for any $\alpha \in A$ and compact set $\mathfrak{K} \subset \R^{d+1}$ we also require that the bounds \refcomment{REF14}
\begin{equation}\label{analyticboundmodel}
\left|
(\Pi_x \tau)
\bigl(\CS_{x}^\lambda \eta \bigr)
\right|
\lesssim
||\tau||_{\alpha} \lambda^{\alpha}
\textnormal{ and } 
\rf{\sup_{\beta < \alpha}\ 
\frac{\left|\left|\Gamma_{xy} \tau \right|\right|_\beta}
{||x-y||_{\mathfrak{s}}^{\al-\beta}}
\lesssim
||\tau||_{\alpha}}  
\; 
\end{equation} 
\noindent hold uniformly over all $\tau \in T_{\alpha}$, $\lambda \in (0,1]$, space-time points $x,y \in \mathfrak{K}$, and test functions $\eta \in B_{r}$ for $r :=  \lceil - \min A \rceil $.
\end{definition}

Given a fixed regularity structure $\TT$, let $\mathcal{M}$ be the set of all models on $\TT$. For any compact set $\mathfrak{K} \subset \R^{d+1}$ one can define a ``seminorm'' $||\cdot||_{\mathfrak{K}}$ on $\mathcal{M}$ by defining $||(\Pi,\Gamma)||_{\mathfrak{K}}$ to be the smallest real number $K$ such that the inequalities of \eqref{analyticboundmodel} hold over $x,y \in \mathfrak{K}$ with proportionality constant $K$ \footnote{We used the word seminorm in quotation marks since $\mathcal{M}$ is \emph{not} a linear space due to the algebraic constraint \eqref{algconditionmodel}.}. One can then define a corresponding metric on $\mathcal{M}$. While we do not explicitly give the metric here, the corresponding notion of convergence on $\mathcal{M}$ is very important and will be referenced when we introduce more of the machinery of regularity structures. 

\begin{remark}
It is straightforward to check that the $(\Pi,\Gamma)$ introduced last section satisfies the conditions to be a model for the regularity structure of abstract polynomials $\bar{\TT}$. 
\end{remark}

\begin{remark}
Given  $\tau \in T_{\alpha}$ and a model $(\Pi,\Gamma)$ it is not necessarily the case that $\Pi_{z}\tau \in \Cs^{\alpha}$. The key point here is that the first bound of \eqref{analyticboundmodel} is only enforced for test functions centered at $z$.
\end{remark}

Another thing we did in the previous section was develop a notion of regularity for families of local expansions $F:\R^{d+1} \rightarrow T$. More generally, such families $F$ with good regularity properties will be called \emph{modelled distributions}. 

\begin{definition}\label{modeldistdef}
Fix a regularity structure $\TT$ and a model $(\Pi,\Gamma)$ for $\TT$.
Then for any $\gamma \in \R$ the space of \textit{modelled distributions} $\CD^\gamma$ consists of all functions $F:\R^{d+1} \rightarrow T^{-}_{\gamma}$ such that for any compact set $\mathfrak{K} \subset \R^{d+1}$
\begin{equ}
\| F\|_{\gamma; \mathfrak{K}} := 
\sup_{x \in \mathfrak{K}} 
\sup_{\beta < \gamma} 
{\|F(x)\|_\beta}  
+ 
\sup_{
\substack{
x,y \in \mathfrak{K}\\ 
0 < \|x-y\|_\mathfrak{s} \le 1} 
}
\sup_{\beta < \gamma} 
\frac{\|F(x) - \Gamma_{xy} F(y)\|_\beta}
{ \|x-y\|_{\mathfrak{s}}^{\gamma - \beta} } < \infty\;.
\end{equ}
\end{definition}
The definition above generalizes the idea behind Theorem \ref{modelledholderfunctions}. In the next section we will see a scenario where a certain class of functions with classical regularity $\Cs^{\alpha}$ for $\alpha \in (\frac{1}{3}, \frac{1}{2})$ can be thought of as more regular via the construction of lifts to modelled distributions in a $\mathcal{D}^{\gamma}$ space with $\gamma = 2\alpha$. This corroborates our earlier remark that objects with bad classical regularity can be thought of as more regular via a lift to a well behaved family of local expansions.
In the next lecture we will see how this point of view actually pays off. Even if two space-time distributions $f,g$ are too irregular to define their product $fg$ via Theorem \ref{multiplication}, we will in fact be able to make sense of their product if we can lift them to a appropriate $\mathcal{D}^{\gamma}$ spaces. 

\begin{remark}
As was the case with the $\Cs^{\alpha}$ spaces, certain theorems for $\mathcal{D}^{\gamma}$ spaces fail when $\gamma \in \mathbf{Z}$ or more general, when $\gamma \in A$ (in particular, the abstract Schauder estimate in second part of \rf{Theorem \ref{reconstructionintegration}} \refcomment{REF15} fails). Therefore we implicitly assume that any $\mathcal{D}^{\gamma}$ space entering the assumptions or conclusion of theorem involve a value  $\gamma \notin A$. 
\end{remark}

The machinery of regularity structures operates with a \emph{fixed} regularity structure $\TT$ and \emph{varying} models $(\Pi,\Gamma)$. Therefore it is very important to remember that the definition of a $\mathcal{D}^{\gamma}$ space strongly depends on the choice of model (even though their constituent objects $F:\R^{d+1} \rightarrow T$ don't make reference to any model). We will sometimes use the notation $\mathcal{D}^{\gamma}[(\Pi,\Gamma)]$ to make the dependence of this space on the choice of model explicit. 
Furthermore, we will sometimes be interested in comparing modelled distributions that live in different $\mathcal{D}^{\gamma}$ spaces coming from different models. We define $\mathcal{M} \ltimes \mathcal{D}^{\gamma}$ to be the set of triples $(\Pi,\Gamma,F)$ such that $(\Pi,\Gamma) \in \mathcal{M}$ and $F \in \mathcal{D}^{\gamma}[(\Pi,\Gamma)]$. There is a natural way to turn $\mathcal{M} \ltimes \mathcal{D}^{\gamma}$ into a metric space, we do not describe this here but refer the reader to \cite[Remark 2.11]{hairer2015introduction}. 

\subsection{Controlled rough paths}

The theory of rough paths was originally developed by Lyons in \cite{Ly98}, in this section we will see how the theory of regularity structures is related to a variant of Lyons' rough paths due to Gubinelli \cite{Gu04} called \textit{controlled rough paths}. 
For the purposes of this section we will work with $\R^{d}$ valued functions defined on $[0,1]$ instead of the real-valued functions defined on space-time we looked at earlier. Modifying definitions \ref{defofmodel} and \ref{modeldistdef} to this setting is straightforward.

\medskip
Gubinelli was interested in defining the Riemann-Stieltjes type integral
\begin{align}\label{e:stochInt}
\int_0^1 f \cdot dg \;,
\end{align}
for functions $f,g$ in the H\"older space $\Cc^\ga([0,1],\R^{d})$ for some $\ga \in (\frac13, \frac12)$. The classical theory breaks down in this regime for familiar reasons, morally $dg$ is in $\Cc^{\gamma - 1}$ and the product $f\ dg$ is not canonically defined since $2 \gamma - 1 < 0$.   

The strategy of controlled rough paths can be paraphrased as follows. If $g$ is a well understood stochastic process one might be able to define the objects 
\begin{equation}\label{e:iteratedint}
\int_{0}^{\bullet} g_{i} \,dg_{j}
\end{equation}
for $1 \le i,j \le d$ via some probabilistic procedure (this is analogous to our construction of Wick powers of $Z$ earlier). Then based on a completely deterministic analysis, the integral \eqref{e:stochInt} can be constructed for a whole class of functions $f$ which admit a type of local expansion in terms of $g$.

\def\Gubi{\cite{Gu04}}
\begin{definition}[Gubinelli \Gubi]\label{controldef} A function $f:[0,1] \rightarrow \R^{d}$ is \emph{controlled} by a function $g \in \Cc^\ga([0,1],\R^{d})$ if there exists a function $D_{g}f: [0,1] \to \R^{d \times d}$ such that one has the bounds
\begin{equation}\label{e:Lip3}
\bigg| 
f(t)
-
\left[ 
f(s) 
+
D_{g}f(s) 
(g(t) - g(s)) 
\right]
\bigg| 
\lesssim |t-s|^{2\ga}
\end{equation}
and
\begin{equation}\label{e:Lip4}
\left| D_{g}f(t) - D_{g}f(s) \right| \lesssim |t-s|^\ga 
\end{equation}
uniformly over $s,t \in [0,1]$. Above $D_{g}f(s)$ is being thought as a $d \times d$ matrix acting on $\R^{d}$.
\end{definition}

The requirements \eqref{e:Lip3} and \eqref{e:Lip4} should be seen as analogs of \eqref{e:Lipschitz1} and \eqref{e:Lipschitz2} above and the object $D_{g}f$ is analogous to a derivative. Gubinelli's observation was that although $f$ will only be a $\Cc^\ga$ function, the fact that $f$ is controlled by $g$ allows one to treat $f$ as if it had $\Cc^{2\ga}$ regularity.  

\medskip
Fix a choice of $\gamma \in (\frac{1}{3},\frac{1}{2})$. We will now define a regularity structure $\TT$ and an associated model $(\Pi,\Gamma)$ built using a function $g \in \Cc^\ga([0,1],\R^{d})$. In this setting the statement that $f:[0,1] \rightarrow \R^{d}$ is controlled by $g$ will be equivalent to the existence of a lift of $f$ to modelled distribution in $\CD^{2\gamma}$. One difference we will see here versus Section \ref{abstractpoly} will be in the action of the structure group $G$ and the $\Gamma_{xy}$ of the model. The interested reader can also look at \cite[Section 3.2]{hairer2015introduction} where it is shown how enlarging the regularity structure given here and doing the same for the model (which is where one needs a definition for \eqref{e:iteratedint}) allows one to define the integral \eqref{e:stochInt}. 

\medskip
The regularity structure $\TT = (A,T,G)$ we use has indexing set $A = \{0, \gamma\}$ where $\gamma \in (\frac{1}{3},\frac{1}{2})$. We set $T_{0} = \R^{d}$ with distinguished basis $\{\symbol{\mathbf{E_i}}\}_{i=1}^{d}$ and $T_{\gamma} = \R^{d \times d}$ with distinguished based $\{\symbol{\mathbf{M_{i,j}}}\}_{i,j=1}^{d}$. We now turn to defining the structure group $G$, for any $h \in \R^{d+1}$ we define $\Gamma_{h}: T \rightarrow T$ by setting 
\[
\Gamma_{h}
\symbol{\mathbf{E_j}}\ 
= \symbol{\mathbf{E_j}},\ 
\Gamma_{h}\symbol{\mathbf{M_{i,j}}}
=\ 
\symbol{\mathbf{M_{i,j}}}
+
h_{j}\symbol{\mathbf{E_{i}}},\ 
\]
and extending by linearity. We then set $G: = \{\Gamma_{h}\}_{h \in \R^{d}}$. It is an easy exercise to check that $G$ satisfies the necessary properties to be a structure group and is in fact ismorphic to $\R^{d}$. 
If a function $f$ is controlled by $g$ we can lift $f$ to vector $F \colon [0,1] \to T^{d}$  of modelled distributions by setting
\begin{align}
F_i(s) =  f_i(s) \symbol{\mathbf{E_i}}  + \sum_j D_{g}f_{i,j}(s) \symbol{\mathbf{M_{i,j}}}  \;
\end{align} 
%

%where $\langle f(s), \symbol{\mathbf{E}} \rangle = \sum_{i=1}^{d} f_{j}(s)\symbol{\mathbf{E_i}}$, the second term above is defined analogously.

\medskip
We now describe a way to  build a (vector-valued) model $(\Pi,\Gamma)$ for this regularity structure for any fixed $g \in \Cc^{\gamma}$. For $t \in [0,1]$ we set
\begin{equation*}
\begin{split}
\left( \Pi_{t}\symbol{\mathbf{E_{i}}}\right)(r) &= e_{i},\\ 
\left( \Pi_{t}\symbol{\mathbf{M_{i,j}}}\right)(r) &= (g_{j}(r) - g_{j}(t))e_{i}
\end{split}
\end{equation*}
where $r \in [0,1]$ is a dummy variable and $\{e_{i}\}_{i=1}^{d}$ are the standard basis vectors for $\R^{d}$ (these are concrete vectors, as opposed to the abstract symbols $\{\symbol{\mathbf{E_i}}\}_{i=1}^{d}$). 
Finally we define the second part of the model as follows, for $s,t \in [0,1]$ we set $\Gamma_{st} = \Gamma_{g(t) - g(s)} \in G$. One can then check that $(\Pi,\Gamma)$ satisfy the algebraic and analytic conditions to be a model. Finally one has the following theorem.

\begin{theorem}\label{modeleddistdgamma} Let $\TT$ be the regularity structure defined above and let $(\Pi,\Gamma)$ be a model built out a fixed $g \in \Cc^\ga([0,1],\R^{d})$. Then a function $f:[0,1] \rightarrow \R^{d}$ is controlled by $g$ if and only if there exists a modelled distribution $F \in \CD^{2\gamma}$ with $\mathcal{Q}_{0}F(t) = \sum_{i=1}^{d}f_{i}(t)\symbol{\mathbf{E_i}}$. 
\end{theorem}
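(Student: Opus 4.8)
The plan is to unwind Definition~\ref{modeldistdef} in this very concrete regularity structure and match it term by term with Definition~\ref{controldef}; there is essentially no analytic content beyond bookkeeping. First observe that since $\gamma\in(\frac13,\frac12)$ one has $0<\gamma<2\gamma<1$, so the elements of $A=\{0,\gamma\}$ lying strictly below $2\gamma$ are exactly $0$ and $\gamma$; in particular $T^{-}_{2\gamma}=T_{0}\oplus T_{\gamma}=T$, and a modelled distribution $F\in\CD^{2\gamma}$ is nothing but a map $F\colon[0,1]\to T$, which we write as $F(s)=\sum_{i}f_{i}(s)\,\symbol{\mathbf{E_i}}+\sum_{i,j}\hat f_{i,j}(s)\,\symbol{\mathbf{M_{i,j}}}$. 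The dictionary with the data $(f,D_{g}f)$ of Definition~\ref{controldef} is then forced: the $T_{0}$-component of $F$ is $f$ (this is precisely the constraint $\mathcal{Q}_{0}F=\sum_{i}f_{i}\,\symbol{\mathbf{E_i}}$ in the statement), and we read the ``derivative process'' off the $T_{\gamma}$-component, $D_{g}f_{i,j}:=\hat f_{i,j}$.

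The only computation is the explicit evaluation of $\Gamma_{st}F(t)$ via the action of the structure group on the basis, which gives
\begin{align*}
F(s)-\Gamma_{st}F(t) &= \sum_{i}\Bigl(f_{i}(s)-f_{i}(t)-\sum_{j}D_{g}f_{i,j}(t)\bigl(g_{j}(s)-g_{j}(t)\bigr)\Bigr)\,\symbol{\mathbf{E_i}}\\
&\qquad{}+ \sum_{i,j}\bigl(D_{g}f_{i,j}(s)-D_{g}f_{i,j}(t)\bigr)\,\symbol{\mathbf{M_{i,j}}}\,.
\end{align*}
Reading off the $\beta=\gamma$ piece, the $\CD^{2\gamma}$ bound $\|F(s)-\Gamma_{st}F(t)\|_{\gamma}\ls|s-t|^{2\gamma-\gamma}$ is literally \eqref{e:Lip4}. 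Reading off the $\beta=0$ piece, the $\CD^{2\gamma}$ bound $\|F(s)-\Gamma_{st}F(t)\|_{0}\ls|s-t|^{2\gamma}$ is, up to interchanging $s$ and $t$, the bound \eqref{e:Lip3} --- except that $D_{g}f$ there is evaluated at $t$ rather than at $s$. This mismatch of base points is the one mildly delicate point, and it is cheap to fix: the difference between the two expressions is $\bigl(D_{g}f(s)-D_{g}f(t)\bigr)\bigl(g(s)-g(t)\bigr)$, whose norm is $\ls|s-t|^{\gamma}\,|s-t|^{\gamma}=|s-t|^{2\gamma}$ by \eqref{e:Lip4} together with $g\in\Cc^{\gamma}$. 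Hence, \emph{given} \eqref{e:Lip4}, the $\beta=0$ bound of $\CD^{2\gamma}$ and the bound \eqref{e:Lip3} are equivalent. It remains to handle the ``local boundedness'' part of the $\CD^{2\gamma}$ norm, $\sup_{s\in[0,1]}\bigl(\|F(s)\|_{0}+\|F(s)\|_{\gamma}\bigr)<\infty$: this amounts to $f$ and $D_{g}f$ being bounded on $[0,1]$, which is automatic on the compact interval once $f$ and $D_{g}f$ are continuous --- and continuity of $D_{g}f$ is \eqref{e:Lip4}, while continuity of $f$ follows from \eqref{e:Lip3}--\eqref{e:Lip4}.

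Assembling these equivalences yields both implications at once: for the ``only if'' direction one defines $F$ from $(f,D_{g}f)$ by $F(s)=\sum_{i}f_{i}(s)\,\symbol{\mathbf{E_i}}+\sum_{i,j}D_{g}f_{i,j}(s)\,\symbol{\mathbf{M_{i,j}}}$ and verifies the $\CD^{2\gamma}$ bounds exactly as above; for the ``if'' direction $f$ is prescribed by $\mathcal{Q}_{0}F$ as in the statement, one defines $D_{g}f$ to be the $T_{\gamma}$-component of $F$, and the same computation runs in reverse. I expect no genuine obstacle here --- the statement is a translation of vocabulary --- and the only thing requiring care is the base-point mismatch in the $T_{0}$-estimate noted above; relatedly, one should double-check the sign convention in $\Gamma_{st}$ so that the model identity $\Pi_{t}=\Pi_{s}\Gamma_{st}$ of Definition~\ref{defofmodel} holds, but fixing it alters an intermediate term only by a quantity already seen to be $O(|s-t|^{2\gamma})$, so it does not affect the conclusion.
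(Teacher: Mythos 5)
The paper states Theorem~\ref{modeleddistdgamma} without proof --- it is offered as a vocabulary exercise --- so there is nothing to compare your argument against. Your argument is correct and is essentially the only reasonable one: unpack the definitions, compute $\Gamma_{st}F(t)$ via the explicit action of the structure group, and identify the $\beta=0$ and $\beta=\gamma$ pieces with \eqref{e:Lip3} and \eqref{e:Lip4}. Two side remarks you make, however, are worth tidying up; neither affects the conclusion.

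First, there is no base-point mismatch. Interchanging $s$ and $t$ in \eqref{e:Lip3} --- which is innocuous, since the bound is required uniformly over $s,t\in[0,1]$ --- gives $\bigl|f(s)-f(t)-D_{g}f(t)\bigl(g(s)-g(t)\bigr)\bigr|\lesssim|s-t|^{2\gamma}$. This is \emph{identical} to the $T_{0}$-component of your displayed expression for $F(s)-\Gamma_{st}F(t)$; after the interchange, $D_{g}f$ lands at $t$ on both sides. So the ``mildly delicate point'' you flagged is not there, and the triangle-inequality patch you supply, while correct, is superfluous.

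Second, your closing caution about the sign of $\Gamma_{st}$ is well-founded but the quantitative claim attached to it is wrong. The paper's $\Gamma_{st}:=\Gamma_{g(t)-g(s)}$ does violate the model identity $\Pi_{t}=\Pi_{s}\Gamma_{st}$ (checking on $\symbol{\mathbf{M_{i,j}}}$ shows the correct definition is $\Gamma_{st}:=\Gamma_{g(s)-g(t)}$), and your formula silently uses the corrected sign, which is precisely why $g_{j}(s)-g_{j}(t)$ and not $g_{j}(t)-g_{j}(s)$ appears and why the estimates come out right. But flipping the sign shifts the $T_{0}$-component by $2D_{g}f(t)\bigl(g(s)-g(t)\bigr)$, a quantity of size $O(|s-t|^{\gamma})$, \emph{not} $O(|s-t|^{2\gamma})$; had you actually used the paper's stated sign the $\beta=0$ bound would no longer be equivalent to \eqref{e:Lip3}. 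The sign matters, and is not absorbed into the error. Since you in effect used the right one, the proof stands as written.
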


\subsection{Regularity Structures for SPDEs}\label{sec: regstructforspde}
We take a moment to discuss the vector space $T$ that appears in regularity structures $\TT$ used for solving equations like \eqref{e:general_equation}. 
The space $T$ will be formed by the linear span of abstract symbols. 
We denote by $\mathfrak{T}$ the set of all abstract symbols appearing in $T$. 
$\mathfrak{T}$ contains the symbol $\symbol{\Xi}$ which represents the driving noise $\xi$, since $\xi$ is taken to be space-time white noise we set $|\Xi| = -d/2 - 1 - \kappa$ where $\kappa > 0$ can be taken arbitrary small.
$\mathfrak{T}$ will also have the symbol $\symbol{\mathcal{I}}[\symbol{\Xi}]$ which represents the solution to the underlying linear equation. 
More generally, given a symbol $\tau \in \mathfrak{T}$ it will \emph{sometimes} be the case that $\mathfrak{T}$ also contains the abstract symbol $\symbol{\mathcal{I}}[\tau]$ which represents the ``integral'' of $\tau$, that is ``$K \star \tau$''. 
Inspired by the Schauder estimate (Theorem \ref{Schaudestimate}) we would then require $|\symbol{\mathcal{I}}[\tau]| = |\tau| + 2$. However,  we do not allow any symbol of the form $\symbol{\mathcal{I}[\mathbf{X^{k}}]}$ \footnote{Said differently, we assume that $\symbol{\mathcal{I}}[\cdot]$ applied to any abstract polynomial vanishes, Section \ref{sec: integration} will clarify this.}.
Given symbols $\tau_{1}, \tau_{2} \in \mathfrak{T}$ it will \emph{sometimes} be the case that $\mathfrak{T}$ will contain the abstract symbol $\tau_{1} \tau_{2}$, which represents a commutative product of $\tau_{1}$ and $\tau_{2}$. In this case we will require $|\tau_{1} \tau_{2}| = |\tau_{1}| + |\tau_{2}|$. This condition on products is an important way that the concept of order differs from that of regularity \footnote{In particular, this will allow us to bypass circular issues like the product $v \<2>$ in Section \ref{dpdsec}.}.

\medskip

The symbols mentioned above are generated recursively: one starts with a set of primitive symbols which consists of $\symbol{\Xi}$ and various powers of $\symbol{\mathbf{X}}$ and then builds more complicated symbols by using $\symbol{\mathcal{I}}[\cdot]$ and our abstract product. The graphical notation we used for $\Phi^{4}$ should be seen as a shorthand for the symbols we have described. For example, we have \refcomment{REF16}
\[
\<s31>
=
\symbol{
\mathcal{I} 
\left[ \mathcal{I}[\Xi]^{3}\right]
\mathcal{I}[\Xi] }.
\]
However, the regularity structures one encounters in practice do not contain \emph{all} the symbols generated by the recursive procedure sketched above, doing so would usually create problems for the first and second conditions of Definition \ref{Regstrucdef}. 

\medskip
To construct the right list of symbols $\mathfrak{T}$ we start by iteratively applying a particular set of rules $\mathfrak{R}_{F}$ determined by the structure of the non-linearity $F$ appearing in \eqref{e:general_equation}. The list of rules for $\Phi^{4}$ equations is given by

\begin{equation}\label{rules}
\mathfrak{R}_{\Phi^{4}} := 
\{ 
\symbol{\mathbf{X}^{k} \mathcal{I}[\cdot]},\ 
\symbol{\mathbf{X}^{k} \mathcal{I}[\cdot] \mathcal{I}[\cdot]}, \
\symbol{ \mathcal{I}[\cdot] \mathcal{I}[\cdot]\mathcal{I}[\cdot]} \ 
\} \;\rf{.}
\end{equation}
\refcomment{REF17}
Above and in what follows $k$ represents an arbitrary multi-index, sometimes subject to a stated constraint.
We set $\mathfrak{T}_{0} := \{ \symbol{\Xi},\ \Xk \}$ to be the set of primitive symbols. Then for $j \ge 1$, the set $\mathfrak{T}_{j}$ is formed by taking the union of $\mathfrak{T}_{0}$ with the set of all the symbols that one gets by applying any of the operations listed in the given rule $\mathfrak{R}_{F}$ to any of the elements of $\mathfrak{T}_{j-1}$. For example, in the case of $\Phi^{4}$ it is the case that 
\[
\tau_{1}, \tau_{2}, \tau_{3} \in \mathfrak{T}_{j} 
\Rightarrow 
\symbol{\mathcal{I}}[\tau_{1}]\symbol{\mathcal{I}}[\tau_{2}]\symbol{\mathcal{I}}[\tau_{3}] \in \mathfrak{T}_{j+1}.
\]

An important consequence of subcriticality of the equation \eqref{e:general_equation} is the following: if one defines the sets of symbols $\mathfrak{T}_{j}$ using $\mathfrak{R}_{F}$ then there will exist some $\beta > 0$ such that for all $j \ge 1$ one has
\[
\min_{\tau \in \mathfrak{T}_{j} \setminus \mathfrak{T}_{j-1}} |\tau|
>
\min_{\tau \in \mathfrak{T}_{j-1}} |\tau|
+ \beta.
\] 
This means that as we iterate the application of the rule the new symbols we generate are increasing in order - this guarantees that if we set $\mathfrak{T}$ equal to $\cup_{j=0}^{\infty} \mathfrak{T}_{j}$ then the corresponding list of orders $A$ will be bounded below and will not contain any accumulation points. 

\medskip
However, $\mathfrak{T}$ would still include an infinite list of symbols. In practice one wants $\mathfrak{T}$ to be a finite set - to do this we fix a constant $\gamma$ which is the upper limit on what order symbols we include in our regularity structure \footnote{$\gamma$ will need to be sufficiently large to allow one to pose the abstract fixed point problem, see Section~\ref{subsec: multiplication}.}. We can then modify our previous construction.  For $j \ge 1$ we define the sets $\bar{\mathfrak{T}}_{j}$ by taking the union of $\mathfrak{T}_{0}$ with the set of all the symbols that one gets by applying any of the operations listed in the given rule $\mathfrak{R}_{F}$ to any of the elements of $\bar{\mathfrak{T}}_{j-1}$, but now with the convention that $\symbol{\mathcal{I}}[\tau] > \gamma$ then  $\symbol{\mathcal{I}}[\tau]$ is considered to vanish \footnote{In particular, no symbol can contain $\symbol{\mathcal{I}}[\tau]$ as a subsymbol.}. We then set

\[
\mathfrak{T}:= \left\{ \tau \in \mathfrak{T}_{0} \cup \left(\bigcup_{j=1}^{\infty} \bar{\mathfrak{T}}_{j} 
\right)
:\ |\tau| \le \gamma \right\}.
\]

\subsection{The regularity structure and model for $\Phi^{4}_{2}$}\label{regphi42}

When defining the regularity structure $\TT$ for the $\Phi^{4}_{2}$ equation the list of symbols is given by $\mathfrak{T}$ defined as above with  the rule $\mathfrak{R}_{\Phi^{4}}$ and $\gamma$ taken positive but sufficiently small ($\gamma > 2 \kappa$ suffices). 
\begin{table}[h]
\caption{Symbols for $\Phi^{4}_{2}$}
\centering
\begin{tabular}{|c|c|}
\hline
Symbol & Order \\ \hline
$\symbol{\Xi}$ & $-2 -\kappa$ \\ \hline
$\<s1>$ & $- \kappa $ \\ \hline
$\<s2>$ & $ - 2 \kappa $ \\ \hline
$\<s3>$ & $ - 3 \kappa $ \\ \hline
$\symbol{\mathbf{1}}$ & $0$ \\ \hline
\end{tabular}
\label{Phi42symbols}
\end{table}

Any realization of the driving noise $\xi$ can then be lifted to a model $(\hat{\Pi}^{\delta},\hat{\Gamma}^{\delta})$ as follows. For any $z \in \R^{2+1}$ we set:

\begin{equation}\label{Phi42model}
\begin{split}
\left( \hat{\Pi}^{\delta}_{z} \symbol{\Xi} \right)(\bar{z}) 
&=\  \xi_{\delta}(\bar{z})\\ 
\left( \hat{\Pi}^{\delta}_z \<s1>\right)(\bar{z}) 
&=\ \<1>_{\delta}(\bar{z})
\end{split}
\qquad
\begin{split} 
\left( \hat{\Pi}^{\delta}_z\<s2> \right)(\bar{z})
&=\ \<2>_{\delta}(\bar{z})\\
\left(\hat{\Pi}^{\delta}_z\<s3>\right)(\bar{z}) 
&=\  \<3>_{\delta}(\bar{z})\\ 
\left( \hat{\Pi}^{\delta}_z \symbol{\1}\right)(\bar{z}) 
&=\ 1 
 \;.
\end{split}
\end{equation}
 Here $\<1>_\delta$, $\<2>_\delta$, $\<3>_\delta$ are the approximate Wick powers introduced in Lecture~\ref{s:l2}.
A key simplification with $\Phi^{4}_{2}$ is that the maps $\Pi_{z}$ do not depend on $z$, this means we can set $\Gamma_{xy} = \mathrm{Id}$ where $\mathrm{Id}$ is the identity on $T$.  It can be checked that the models $(\Pi^{\delta},\Gamma^{\delta})$ satisfy the conditions of Definition \ref{defofmodel}. Additionally, one can remove the regularization and show that the models $(\Pi^{\delta},\Gamma^{\delta})$, viewed as random elements of $\mathcal{M}$, converge in probability as $\delta \downarrow 0$ to a limiting random model $(\Pi,\Gamma)$ given by \footnote{We continue to abuse notation here, using point-wise equalities for singular distributions.}

\begin{equation*}
\begin{split}
\left( \hat{\Pi}_{z} \symbol{\Xi} \right)(\bar{z}) 
&=\  \xi(\bar{z})\\ 
\left( \hat{\Pi}_z \<s1>\right)(\bar{z}) 
&=\ \<1>(\bar{z})
\end{split}
\qquad
\begin{split} 
\left( \hat{\Pi}_z\<s2> \right)(\bar{z})
&=\ \<2>(\bar{z})\\
\left(\hat{\Pi}_z\<s3>\right)(\bar{z}) 
&=\  \<3>(\bar{z})\\ 
\left( \hat{\Pi}_z \symbol{\1}\right)(\bar{z}) 
&=\ 1 
 \;
\end{split}
\end{equation*}
and with $\Gamma_{xy} = \mathrm{Id}$ for all $x,y \in \R^{d+1}$.

\medskip
Although the full regularity structure $\TT$ is required to formulate the fixed point problem, the solution $\Phi$ will be of the form $\Phi(z) = \Phi_{\symbol{\mathbf{1}}} \symbol{\mathbf{1}} + \<s1>$ which is similar to the decomposition seen in the Da Prato - Debussche argument. The fact that the structure group can be chosen to be trivial is why the Da Prato - Debussche argument works for $\Phi^{4}_{2}$.  

\subsection{The regularity structure and model for $\Phi^{4}_{3}$}\label{regphi43}
For $\Phi^{4}_{3}$, 
%if one uses the appropriate model 
it suffices to define $\mathfrak{T}$ by taking $\gamma$ slightly greater than $1$. We include a table of these symbols below \footnote{The algorithm for construction $\mathfrak{T}$ that we have given may produce extraneous symbols and indeed  the last three symbols given in Table~\ref{Phi43symbols} are unnecessary to set up a fixed point problem for $\Phi^4_3$. See \cite[Sec 8.1]{RegStr} for an algorithm that will give a minimal list of symbols. Also see \ref{sec:FPA} of these lecture notes for a discussion of relevant terms.}.  Whenever a factor $\Xk$ appears in a symbol the multi-index $k$ can vary but is constrained by the condition that the symbol's order be less than $\gamma$.

\begin{table}[h] %\label{t2}
\caption{Symbols for $\Phi^{4}_{3}$}
\centering
\scalebox{.9}{
\begin{tabular}{|c|c|}
\hline
Symbol  &  Order \\ \hline
$\symbol{\Xi}$ & $-5/2 - \kappa$ \\ \hline
$\Xk$ & $|k|_{\mathfrak{s}}$ \\ \hline
$\<s1> \Xk$ & $|k|_{\mathfrak{s}} - 1/2 - \kappa $\\ \hline
$\<s2> \Xk$ & $|k|_{\mathfrak{s}} - 1 - 2\kappa $\\ \hline
$\<s3> $ & $- 3/2 - 3\kappa $\\ \hline
$\<s30>$ & $ 1/2 - 3 \kappa$ \\ \hline
$\<s12>$ & $1/2 - 3 \kappa$ \\ \hline
$\<s20>$ & $1 - 2 \kappa $ \\ \hline
$\<s31>$ & $- 4 \kappa$ \\ \hline
$\<s32>$ & $ -1/2 - 5 \kappa$ \\ \hline
$\<s22>$ & $ -4 \kappa$ \\ \hline
$\<s30>\ \<s30>$ & $ 1 - 6 \kappa$ \\ \hline
$\symbol{\mathbf{X}^{k}} \<s31>$ & $ -4 \kappa + |k|_{\mathfrak{s}}$ \\ \hline
$\<s21>$ & $ 1/2 - 3 \kappa$ \\ \hline
\end{tabular}}
\label{Phi43symbols}
\end{table}
%
%\hwcomment{Actually, I don't. If you check carefully, all of these trees still appear even with your new rules. In practice we do not need any  of them, because they all only appear in the description of the rhs of the equation, which we only need up to order $>0$. (this would be easier to capture with the old rule where one has separate sets $\mathcal{U}$ etc...) It is only the lhs that one needs to describe up to order $\gamma$}\accomment{Added symbols to table, added footnote}
%
%{\color{red} Shouldn't the symbols $\<s30>^2$ (order $1-6 \kappa$), $\<s31> \mathbf{X}^k$ (order $-4\kappa +|k|_{\mathfrak{s}}$), $\<s20> \<s1>$ (order $\frac{1}{2} - 3\kappa$) be included as well according to our rule? - I am not sure if our rule produces some terms that we actually do not need... I think a ``true" rule could also state that once $\mathcal{I}[\tau]$ has order $>\gamma$ we trash all symbols including $\mathcal{I}[\tau]$, even if it has got order $<\gamma$ by multiplying with a tree of negative order}
%
Again, the approach is to define a family of random models $(\hat{\Pi}^{\delta},\hat{\Gamma}^{\delta})$, defined via lifting $\xi$, which converge in probability to a limiting random model $(\hat{\Pi},\hat{\Gamma})$ as $\delta \downarrow 0$. 
For $\tau = \symbol{\Xi}, \<s1>, \<s2>$, and $\<s3>$, we define $\hat{\Pi}^{\delta}_{z}\tau$ in the same way as we did for \eqref{Phi42model} (where the objects are replaced by their $d=3$ counterparts). We will not explicitly describe all of the model $(\hat{\Pi},\hat{\Gamma})$, the goal for our present discussion is to show how the $\Gamma^{\delta}_{xy}$'s and structure group $G$ are forced to be non-trivial. 

\medskip
This is easily seen with the symbol $\<s30>$. A naive definition one might make is
\begin{equation}\label{naivedefinition}
\begin{split}
\left( 
\hat{\Pi}^{\delta}_{z}\<s30>
\right)(\bar{z})
=&\ 
\<30>_{\delta}(\bar{z})
=
\int_{\R^{d+1}} du\ K(\bar{z} - u) \<3>_{\delta}(u)
\end{split}
\end{equation}
for any $z \in \R^{3+1}$. However this definition will \emph{not} satisfy the first bound of \eqref{analyticboundmodel}.
%
%At this point we are directly facing the difference between regularity and order. \hwcomment{Actually, for this term regularity and order are still the same. It is only for products with this term that the difference appears}  
While the objects on the right hand side of \eqref{naivedefinition} is of regularity $\frac{1}{2} - 3\kappa$, it  does not satisfy the bound 
\begin{equation}\label{orderbounds30}
\left|
\left(\hat{\Pi}_{z}\<s30>\right)
\bigl(\CS_{z}^\lambda \eta \bigr) 
\right|
\lesssim 
\lambda^{\frac{1}{2} - 3\kappa}
\end{equation}
uniformly in $\lambda \in (0,1]$ for an arbitrary test function $\eta$.
%
%\medskip
A way to reconcile this difference was already seen in Definition \ref{positiveholder2} - if we want to see a space-time function of regularity $\gamma > 0$ vanish at order $\gamma$ then we should subtract a suitable Taylor polynomial. We will get the bound \eqref{orderbounds30} if \eqref{naivedefinition} is changed to
\begin{equation*}
\left( 
\hat{\Pi}^{\delta}_{z}\<s30>
\right)(\bar{z})
=\ 
\int du\ K(\bar{z} - u) \<3>_{\delta}(u)\ 
-
\int du\ K(z - u) \<3>_{\delta}(u).\
\end{equation*}

However $\left( \Pi^{\delta}_{z}\<s30> \right)(\bar{z})$ now has a dependence on $z$ which means that the structure group $G$ cannot be chosen to act trivially on $\<s30>$. 
The compatibility condition \eqref{algconditionmodel} determines completely how $\Gamma$  acts on $\<30>$.
%We can  guess what $\hat{\Gamma}^{\delta}_{xy}\<s30>$ should be via some computation. 
Indeed,  $\hat{\Pi}^{\delta}_{x}\hat{\Gamma}^{\delta}_{xy} = \hat{\Pi}^{\delta}_{y}$ gives us that
\begin{equation*}
\begin{split}
\left[ \hat{\Pi}^{\delta}_{x} \left( \mathrm{Id} - \hat{\Gamma}^{\delta}_{xy}\right) \<s30>\right](\bar{z}) 
&=\ 
\left( \hat{\Pi}^{\delta}_{x}\<s30> \right)(\bar{z}) 
-
\left( \hat{\Pi}^{\delta}_{y}\<s30> \right)(\bar{z})\\ 
&=\ 
\int du\ K(y - u) \<3>_{\delta}(u) 
-
\int du\ K(x - u) \<3>_{\delta}(u).
\end{split}
\end{equation*}
Therefore, we set
\[
\hat{\Gamma}^{\delta}_{xy} \<s30>\ 
=\ 
\<s30> 
+
\left(
\int du\ K(y - u) \<3>_{\delta}(u) 
-
\int du\ K(x - u) \<3>_{\delta}(u)
\right)
\symbol{\mathbf{1}}.
\]
The group action on all the other symbols is determined by similar considerations for integration and the compatibility condition  for products, given in \eqref{e:ReGular} below.
\begin{remark}
In general,  terms involving $\symbol{\mathbf{X}}$ appear in a model when an abstract integration leads to a symbol of positive order. It is worth mentioning that these extra terms do not occur in Gubinelli's 
approach \cite{Massimiliano1} to singular SPDEs using ``paracontrolled distributions". 
\end{remark}

\section{Lecture 4}

\subsection{Construction of Canonical Models}
In the last lecture we discussed regularity structures and models associated with controlled rough paths, $\Phi^{4}_{2}$, and $\Phi^{4}_{3}$. In this section we will show that for any fixed regularity structure $\mathcal{T}$ which is created by a set of formal rules like \eqref{rules}, there exists a canonical way to map each fixed realization of a smoothed noise $\xi_{\delta}$ to a corresponding model $(\Pi^{\delta}, \Gamma^{\delta})$. This model is called a \emph{canonical model} and it will be defined recursively with respect to $\symbol{\mathcal{I}}[\cdot]$ and the abstract product on $T$. After that we will discuss more systematically how to perform the renormalisation procedure which leads to the \emph{renormalized} models, examples of which we have already encountered in the previous lecture.

\medskip
In order to motivate concepts we ignore Definition \ref{defofmodel} for a moment. There is a naive approach to assigning a concrete space-time function (built out of $\xi_{\delta}$) to each of the abstract symbols appearing in our regularity structure, one can recursively define a \emph{single} linear map 
$\mathbf{\Pi^{\delta}}:T \rightarrow \mathcal{S}'(\R^{d+1})$ by setting:

\begin{equation}\label{boldpi}
\begin{split}
%\left( \mathbf{\Pi^{\delta}}\Xk \right)(\bar{z}) 
%&:= \bar{z}^{k}\\
\left( \mathbf{\Pi^{\delta}}\symbol{\Xi} \right)(\bar{z})
&:= 
\xi_{\delta}(\bar{z})\\ 
\left( \mathbf{\Pi^{\delta}}\symbol{\mathcal{I}}[\tau]\right)(\bar{z})
&:= \int_{\R^{d+1}}dy\ K(\bar{z} - y ) \left( \mathbf{\Pi^{\delta}}\tau \right)(y)\\
\left(\mathbf{\Pi^{\delta}} \tau_{1} \tau_{2} \right)(\bar{z})
&:=\left( \mathbf{\Pi^{\delta}}\tau_{1} \right)(\bar{z}) \times \left( \mathbf{\Pi^{\delta}}\tau_{1} \right)(\bar{z}).
\end{split}
\end{equation}
The map $\mathbf{\Pi^{\delta}}$ is a much simpler object than a model but it encodes less structure. In particular, it does not directly encode anything about the order of objects. The additional structure that models encode is  what makes the machinery of Sections \ref{sec:reconstruction} and  \ref{sec: integration}  continuous with respect to models.

\medskip
We have already seen above that when a regularity structure includes symbols of positive order, the maps $\{\Pi_{z}\}$ in a model must be allowed to be $z$-dependent, if we want the first bound of \eqref{analyticboundmodel} to hold.  
Keeping this in mind, we now describe how the maps $\Pi_{z}^{\delta}$ of the canonical model $(\Pi^{\delta},\Gamma^{\delta})$ are defined. For any $z \in \R^{d+1}$ one sets:
\begin{equation}\label{canonicalpidef}
\begin{split}
\left( \Pi^{\delta}_{z} \Xk \tau \right)(\bar{z})
&:= 
(\bar{z}-z)^{k} \times \left( \Pi^{\delta}_{z} \tau \right)(\bar{z})\\
\left( \Pi^{\delta}_{z}\symbol{\Xi} \right)(\bar{z})
&:= 
\xi_{\delta}(\bar{z})\\ 
\left( \Pi^{\delta}_{z}\symbol{\mathcal{I}}[\tau]\right)(\bar{z})
&:= \int_{\R^{d+1}}dy\ K(\bar{z} - y ) \left( \Pi^{\delta}_{z}\tau \right)(y)\\
&\hspace{1cm}
- 
\sum_{
\substack{k \\
|k|_{\mathfrak{s}} < |\tau| + 2
}
}
\frac{
(\bar{z} - z)^{k}}
{k!}
\int_{\R^{d+1}}dy\ (D^{k}K)(z - y ) \left( \Pi^{\delta}_{z}\tau \right)(y)\\
\left(\Pi^{\delta}_{z} \tau_{1} \tau_{2} \right)(\bar{z})
&:=\left( \Pi^{\delta}_{z} \tau_{1} \right)(\bar{z})
\times
\left(\Pi^{\delta}_{z}\tau_{1} \right)(\bar{z}).
\end{split}
\end{equation}
The key point here is that the application of $\symbol{\mathcal{I}[\cdot]}$ can produce a new object of positive order from an old one of negative order. This is why in the third line of \eqref{canonicalpidef} the subtraction we have implemented is just the subtraction of a partial Taylor expansion when $|\symbol{\mathcal{I}}[\tau]| \ge 0$. Of course, multiplication can also produce new objects of positive order but this is dealt with automatically when we enforce the product property, see Section \ref{subsec: multiplication} below, in particular \eqref{e:ReGular}. 

\medskip
In Section \ref{regphi43} we described models\footnote{We remark again that these models were \emph{not} canonical models, see the next section.} $(\hat{\Pi}^{\delta},\hat{\Gamma}^{\delta})$ where for $\tau \in \{ \symbol{\Xi}, \<s1>, \<s2>, \<s3>\}$ the function $\hat{\Pi}^{\delta}_{z}\tau$ had no $z$-dependence.
This was possible because these abstract symbols are of negative order \emph{and} the latter three objects are built (using $\symbol{\mathcal{I}}$ and the abstract product) out of objects which are all of negative order. 
However the symbol $\<s30> = \symbol{\mathcal{I}[ \mathcal{I}[\symbol{\Xi}]^{3}]}$ is of positive order so $\Pi^{\delta}_{z}\<s30>$ had to be $z$-dependent in order for the first bound of \eqref{analyticboundmodel} to hold. We also remark that $\hat{\Pi}^{\delta}_{z}\<s32>$ will also be $z$-dependent even though $\<s32>$ is of negative order - this is because $\<s32>$ is a product of $\<s2>$ and $\<s30>$ and the latter is of positive order.

\medskip
The recursive definition \eqref{canonicalpidef} is convenient to state and useful in many proofs. One can also recursively define the corresponding operators $\Gamma^{\delta}_{xy}$ as we have already sketched above. For models like the canonical model for $\Phi^{4}_{3}$  one could in principle  check the algebraic properties by hand, but this can easily become unwieldy with more complicated examples. \rf{In \cite{RegStr}} \refcomment{REF18} the connection between a recursive definition of the $\Gamma^{\delta}_{xy}$'s and their algebraic properties is made clear in an elegant way by using the language of Hopf algebras (see \cite[Sec 5.3]{hairer2015introduction}). This formulation also becomes very useful when one wants to go beyond the canonical model and construct the renormalized models of the next section. 

\subsection{Convergence of Models and Renormalization}

Most of the machinery we discuss in this lecture is completely deterministic, applied separately for each fixed realization of the noise $\xi$. However there is one major obstacle that this deterministic analysis cannot overcome: in general the canonical models $(\Pi^{\delta},\Gamma^{\delta})$ built from a $\delta$-smoothing of a fixed realization of $\xi$ will \emph{not} converge in the limit $\delta \downarrow 0$. The canonical models associated to $\Phi^{4}_{2}$ and $\Phi^{4}_{3}$ are examples of this: we have already seen in Lecture~\ref{s:l2} that the random space-time distributions $\<1>_\delta^3$ (which play the role of $\Pi^{\delta}_{z}\symbol{\mathcal{I}[\Xi]^{3}} $ in the canonical model based on $\xi_\delta$) do not converge as $\delta \to 0$. The Da Prato - Debussche argument for $\Phi^{4}_{2}$ overcame this by implementing renormalization subtractions and the approach in regularity structures is similar.

\medskip
We first discuss a criterion for the stochastic convergence of models. 
Let $\mathcal{T}$ be a regularity structure defined as in Section \ref{sec: regstructforspde} and  let $\mathfrak{T}_{-}$ be the set of abstract symbols \emph{of negative order} appearing in $\mathcal{T}$.
% and $\mathfrak{T}_{-} \subset \mathfrak{T}$ be the subset of symbols $\tau$ with $|\tau| < 0$.
%
%Now let $(\Pi^{\delta},\Gamma^{\delta})$ be a sequence of random models on the regularity structure $\mathcal{T}$. 
%Suppose that for any $z \in \R^{d+1}$, $\tau \in \mathfrak{T}_{-}$, and smooth test function $\eta$, the random variables $\left(\tilde{\Pi}^{\delta}_{z}\tau\right)(\eta)$ converge in probability, as $\delta \downarrow 0$, to a limiting random variable which we denote $\left(\tilde{\Pi}_{z}\tau\right)(\eta)$. \hwcomment{why do we need to suppose that?}
%
We seek conditions that ensure  that a sequence  $(\Pi^{\delta}, \Gamma^{\delta})$ of random models converge in probability to a random limiting model $(\Pi,\Gamma)$. The key stochastic estimates to show this are the following: for every $\tau \in \mathfrak{T}_{-}$, test function $\eta$, there should exist $\nu, \nu' > 0$ such that the bounds 
\begin{equation}\label{uniformboundonmodels} 
\mathbb{E}
\left[
\left|
(\Pi^{\delta}_{z} \tau)(\CS^{\lambda}_{z} \eta )
\right|^{p}
\right] \lesssim \lambda^{p(|\tau| + \nu)}
\end{equation}
and %\accomment{REF19 \hw{I agree - I like the $\delta$ here}}
\refcomment{REF19 - The authors feel that the $\delta$ should appear in \eqref{uniformboundonmodels} since this is precisely the stochastic estimate (uniform for small $\delta$) one in practice works for}
\begin{equation}\label{differenceboundonmodels}
\mathbb{E}
\left[
\left|
(\Pi^{\delta}_{z} \tau - \Pi_{z} \tau)(\CS^{\lambda}_{z} \eta )
\right|^{p}
\right] \lesssim \delta^{\nu'} \lambda^{p(|\tau| + \nu)}
\end{equation}
hold for every $p \in \mathbf{N}$, uniformly over $\delta, \lambda \in (0,1]$, uniformly over a suitable class of test-functions $\eta$ and locally uniformly in $z$\footnote{For a precise statement of all required conditions in the Gaussian case see \cite[Theorem 10.7]{RegStr}.}. 

\medskip
Note that the conditions above only involve a finite number of symbols $\tau$. This is similar to what we saw in the Da Prato - Debussche argument and as before is a consequence of subcriticality. We also remark that under some  natural assumptions on the sequence of models $(\Pi^{\delta},\Gamma^{\delta})$ one does not have to perform any stochastic estimates on the $\hat{\Gamma}^{\delta}$'s. Finally, as we have already seen in Lecture~\ref{s:l2},   one can win a major simplification when the driving noise $\xi$ is Gaussian. In the Gaussian case one can apply Nelson's Estimate \eqref{e:Nelson} and it suffices to establish the stochastic estimates just for $p=2$.
The reader has already seen the derivation of the bound \eqref{uniformboundonmodels} for $\tau = \<s3>$ in $\Phi^4_2$ and $\Phi^4_3$. The simple graphical approach based on convolutions 
presented there is sufficient for $\Phi^4_3$.

\medskip
It is important to observe that the bound \eqref{uniformboundonmodels} consists of \emph{two statements}: One is showing that the given quantity satisfies the right type of upper bound in $\lambda$ which  determines the \emph{order} of the limiting object. The other statement is that the quantity  remains finite as $\delta \downarrow 0$ and relates to the so called ultraviolet divergences and renormalization. These two issues are essentially orthogonal to each other. 
In particular, if the canonical models have divergent behaviour as $\delta \downarrow 0$ then one will already see this at the level of the simpler map $\mathbf{\Pi^{\delta}}$ of \eqref{boldpi}; it is conceptually simpler to first try to renormalize this map. One does this by defining a new map $\mathbf{\hat{\Pi}^{\delta}} := \mathbf{\Pi^{\delta}} M^{\delta}$ where $M^{\delta}:T \mapsto T$ is a linear map which performs renormalization subtractions at the level of the regularity structure $\mathcal{T}$. For example, for $\Phi^{4}_{3}$ one would have 
\begin{equation*}
\begin{split}
M^{\delta} \<s2> &= \<s2> - C_{\delta}, \\
M^{\delta} \symbol{\mathbf{X_{j}}}\<s2> &= \<s2>\symbol{\mathbf{X_{j}}} - C_{\delta}\symbol{\mathbf{X_{j}}} \textnormal{ for }j=1,2,3 \\
M^{\delta} \<s22> &= \<s22> - C_{\delta}\<s20> - \tilde{C}_{\delta} \symbol{\mathbf{1}},\\
M^{\delta} \<s32> &= \<s32> - 3C_{\delta} \<s12> - C_{\delta} \<s30> + 3C_{\delta}^{2} \<s10> - 3\tilde{C}_{\delta} \<s1>,
\end{split}
\quad
\begin{split}
M^{\delta} \<s3> &= \<s3> - 3C_{\delta}\<s1>,\\
M^{\delta} \<s12> &= \<s12> - C_{\delta}\<s10>, \\
M^{\delta} \<s31> &= \<s31> - 3C_{\delta}\<s11>,\\
\end{split}
\end{equation*}
where $C_{\delta} \sim \frac{1}{\delta}$ and $\tilde{C}_{\delta} \sim - \log(\delta)$ \footnote{The renormalization procedure given for $\<s22>$ is a bit inconsistent, one should also include terms $- C_{\delta}c_{K}\<s2> + C_{\delta}^{2}c_{K}\symbol{\mathbf{1}}$ on the RHS, here $c_{K}$ is a $\delta$-independent finite constant which is formally given by $c_{K} = \int dz\ K(z)$. We will later make the convention that $\symbol{\mathcal{I}}$ should encode integration with just a piece of the heat kernel, defined so that it annihilates constants which means $c_{K}$ will vanish. See Section \ref{sec: integration}.}. $M^{\delta}$ is given by the identity on all remaining abstract symbols for this regularity structure.\footnote{Note that in order to allow for renormalization of the canonical model this regularity structure has more symbols than those listed in Table \ref{Phi43symbols}, in particular it suffices to take $\gamma = 3/2$ when defining $\mathfrak{T}$. Taking $\gamma$ larger does not create any new technical difficulties since $\mathfrak{T}_{-}$ remains the same.}

\medskip
The map $M^{\delta}$ has been defined so that if one views the objects $\mathbf{\hat{\Pi}^{\delta}}\tau$ as \emph{random} space-time distributions then they converge in probability in the $\delta \downarrow 0$ limit.
The canonical model $(\Pi^{\delta},\Gamma^{\delta})$ is an enhancement of the simpler map $\mathbf{\Pi^{\delta}}$. It is possible to postulate suitably flexible conditions on $M^{\delta}$ (for details see \cite[Sec. 8.3]{RegStr}) which would guarantee that one can build another model $(\hat{\Pi}^{\delta},\hat{\Gamma}^{\delta})$, called a renormalized model, which is an analogous enhancement of the map $\mathbf{\hat{\Pi}^{\delta}}$. In the case of the $\Phi^{4}_{3}$ and many other examples the canonical and renormalized models will satisfy the ``diagonal identity''
\begin{equation}\label{renormident}
(\hat{\Pi}^{\delta}_{z}\tau)(z)\ 
=\ 
(\Pi^{\delta}_{z}M^{\delta}\tau)(z).
\end{equation}
Defining this process for general regularity structures is non-trivial (see \cite[Sec 7.1]{hairer2015introduction} for a summary). We remark that the relationship between the renormalized model and canonical model is fairly complex, in general one \emph{does not} have the equality $\hat{\Pi}^{\delta}_{z}\tau = \Pi^{\delta}_{z}M^{\delta}\tau$. 

\medskip
The renormalization procedure can be seen as a deformation of the multiplicative structure of the canonical model. The first three relations of \eqref{canonicalpidef} are essential properties that in practice we always require from models. The maps $\hat{\Pi}^{\delta}_{z}$ will satisfy the first three relations of \eqref{canonicalpidef} but they will not satisfy the last relation.  In the limit $\delta = 0$  the $\hat{\Pi}_z \tau$ will be distributions and it is a priori not even clear how  this condition could  be interpreted. One should then really  view the left hand side of this expression as a \emph{definition} of the right hand side.

%It can be checked that the particular choice of $M^{\delta}$ made above for $\Phi^{4}_{3}$ does satisfy these conditions, and the corresponding random models $(\hat{\Pi}^{\delta},\hat{\Gamma}^{\delta})$ do converge in probability to a limiting random model $(\hat{\Pi},\hat{\Gamma})$. 

\subsection{The deterministic analysis}

It is an amazing fact about the theory of regularity structures,  that once the stochastic calculations that lead to the renormalised model have been performed, the 
deterministic argument that shows the short time existence of solutions, follows ``automatically''. We will now proceed to show how. 

\medskip
Instead of solving the desired equation in a space of $\R$ valued function, we will now solve the equation in a space of \emph{modelled distributions}, i.e. functions taking values in $T$. We stress again that this space depends on the specific realisation of the model (which is in turn constructed as random variables on a suitable probability space). In the case of $\Phi^4_2$ this ``lifted'' solution will take values in the linear space spanned by $\symbol{\mathbf{1}}$ and $\<s1>$. Furthermore, the coefficient for $\<s1>$ will be one, so that we will have
\begin{align*}
\Phi(z) =  \<s1> + \Phi_{\symbol{\mathbf{1}}} (z)\symbol{\mathbf{1}} \;.
\end{align*}
Here, the function $\Phi_{\symbol{\mathbf{1}}}$ which describes the solution $\Phi$ at order $0$ corresponds exactly to the remainder $v$ we already saw in the Da Prato - Debussche argument in Section~\ref{dpdsec}.
In the case of $\Phi^4_3$ we need more terms that describe the solution $\Phi$ and again several of these will be constant. In the end, we  have
\begin{align}\label{e:ansatzPhi}
\Phi(z) =  \<s1> + \Phi_{\symbol{\mathbf{1}}} (z)\symbol{\mathbf{1}}   -\<s30> +  \Phi_{\<s20>}(z) \<s20> + \langle \Phi_{\symbol{\mathbf{X}}}(z), \symbol{\mathbf{X} }\rangle \,,
\end{align}
where we have used the notation $\langle \Phi_{\symbol{\mathbf{X}}}, \symbol{\mathbf{X} }\rangle  = \sum_{j=1}^3 \Phi_{\symbol{\mathbf{X}_j}} \symbol{\mathbf{X}_j}$. Furthermore, we will see that the structure of the equation dictates that $ \Phi_{\<s20>} = -3  \Phi_{\symbol{\mathbf{1}}} $, so that solving for $\Phi$ really involves solving for a system of two functions (one real valued and one vector-valued) $\Phi_{\symbol{\mathbf{1}}}$ and $\Phi_{\symbol{\mathbf{X}}}$. We will justify this particular form of the expansion in Section~\ref{sec:FPA} below.
It is important to note that in both cases the description of the solution $\Phi$ requires much fewer symbols than contained in the regularity structure. The remaining symbols will be used to define the non-linear operations in the fixed point map. The three operations  we need are reconstruction, multiplication and  integration. We will now proceed to explain each of these operations.

\subsubsection{The Reconstruction Theorem}\label{sec:reconstruction}
The fundamental link between the abstract definitions/machinery we have introduced and the concrete results described by Metatheorem \ref{metatheorem} is  the Reconstruction Theorem which states that there is a reconstruction operator $\mathcal{R}$ which establishes  a correspondence between modelled distributions of strictly positive regularity and actual space-time distributions. 
\def\reconstructionRef{\cite[Thm. 3.10]{RegStr}}
\begin{theorem}[\reconstructionRef]\label{ReconstructionTheorem}
Let $\TT = (A,T,G)$ be a regularity structure, let $\alpha := \textnormal{ min } A$, and $r := \lceil -\alpha \rceil$. Let $(\Pi,\Gamma)$ be a model for $\TT$. For any $\gamma > 0$ we set $\mathcal{D}^{\gamma}$ to be the corresponding space of modelled distributions based on the model $(\Pi,\Gamma)$.

Then there exists a continuous linear map  $\mathcal{R}: \mathcal{D}^{\gamma} \rightarrow \Cs^{\alpha}$ with the property that for any compact set $\mathfrak{K} \subset \R^{d+1}$ and any $F \in \mathcal{D}^{\gamma}$ one has that $\mathcal{R}F$ is the unique space-time distribution satisfying
\begin{equation}\label{reconstructionbound}
\left| \left(\mathcal{R}F \right)(\CS_z^{\lambda} \eta) 
- 
\left(\Pi_{z}F(z) \right)(\CS_z^{\lambda} \eta) 
\right|
\lesssim 
\lambda^{\gamma}
\end{equation}
uniformly over test functions $\eta \in B_{r}$, $\lambda \in (0,1]$ and uniformly over compacts in $z$. 

Furthermore, if the given model $(\Pi,\Gamma)$ takes values in continuous functions (this means that $\left(\Pi_{z} \tau \right)$ is always a continuous function) then $\mathcal{R}F$ will also be a continuous function and one has the identity
\begin{equation}\label{reconstructionidentity}
\left( 
\mathcal{R}F
\right)
(z)
=
\left( \Pi_{z}F(z)\right)(z)
\end{equation}
\end{theorem}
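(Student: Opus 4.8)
The plan is to build $\mathcal{R}$ by a multiresolution (wavelet) analysis adapted to the parabolic scaling, rather than by naively patching the local expansions $z\mapsto \Pi_zF(z)$. Fix a compactly supported, sufficiently regular ($C^r$) scaling function together with its associated wavelets, normalised so that the scale-$2^{-n}$ projection reproduces parabolic polynomials of the relevant degree; write $\phi^n_y$, $\psi^n_y$ for the corresponding functions centred at a point $y$ of the dyadic grid at scale $2^{-n}$, each of the form $2^{-n d_{\mathfrak{s}}/2}\CS^{2^{-n}}_y\zeta$ for a fixed smooth compactly supported $\zeta$. The model bounds \eqref{analyticboundmodel} then control all quantities $\langle \Pi_y F(y), \phi^n_y\rangle$, and the defining bound of $\mathcal{D}^\gamma$ controls their $y$-dependence.

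\textbf{Construction of $\mathcal{R}F$.} For $F\in\mathcal{D}^\gamma$ define the scale-$n$ approximation $\mathcal{R}_nF$ by $\langle \mathcal{R}_nF,\phi^n_y\rangle:=\langle \Pi_yF(y),\phi^n_y\rangle$ and extension to a distribution via its scale-$n$ wavelet coefficients; each $\mathcal{R}_nF$ is locally $C^r$. The key estimate is on the increment $\delta_nF:=\mathcal{R}_{n+1}F-\mathcal{R}_nF$: rewriting $\langle \delta_nF,\psi^n_y\rangle$ in terms of $\Pi_xF(x)$ at a neighbouring point $x$ of the finer grid and using the algebraic relation $\Pi_y=\Pi_x\Gamma_{xy}$, one gets
\[
|\langle \delta_nF,\psi^n_y\rangle|\ \ls\ \sum_{\beta<\gamma}\|F(x)-\Gamma_{xy}F(y)\|_\beta\,2^{-n\beta}\,2^{-nd_{\mathfrak{s}}/2}\ \ls\ 2^{-nd_{\mathfrak{s}}/2}\,2^{-n\gamma},
\]
since $\|x-y\|_{\mathfrak{s}}\ls 2^{-n}$. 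Testing $\delta_nF=\sum_y\langle \delta_nF,\psi^n_y\rangle\psi^n_y$ against an arbitrary $\CS^\lambda_z\eta$, one splits the sum at the scale $2^{-n}\sim\lambda$: the fine scales contribute $\ls\lambda^\gamma$ after summing the geometric series (here $\gamma>0$ is essential), the coarse scales contribute $\ls 1$, and the number of overlapping grid points at each scale is tracked crudely. This shows $\sum_n\delta_nF$ converges in $\Cs^\alpha$, uniformly on compacts; set $\mathcal{R}F:=\lim_n\mathcal{R}_nF$. Linearity is clear, and continuity in $F$ (and joint continuity in $(\Pi,\Gamma,F)$) is read off from the same estimates, the model seminorms entering multiplicatively.

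\textbf{Reconstruction bound and uniqueness.} For $\eta\in B_r$ and $\lambda\in(0,1]$, decompose $\CS^\lambda_z\eta$ over wavelet scales; on each level the coefficient of $\mathcal{R}F-\Pi_zF(z)$ is again controlled by $\|F(y)-\Gamma_{yz}F(z)\|_\beta\ls\|y-z\|_{\mathfrak{s}}^{\gamma-\beta}$ together with the model bounds, and summing gives $|(\mathcal{R}F-\Pi_zF(z))(\CS^\lambda_z\eta)|\ls\lambda^\gamma$, which is \eqref{reconstructionbound}. Uniqueness follows from the special case $F\equiv 0$ and the elementary fact that a distribution $g$ with $|g(\CS^\lambda_z\eta)|\ls\lambda^\gamma$ uniformly over $z$, $\eta\in B_r$, $\lambda\in(0,1]$ and $\gamma>0$ must vanish: its scale-$n$ wavelet projection, tested against any fixed test function, is $O(2^{-n\gamma})\to 0$. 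Applying this to the difference of two candidates satisfying \eqref{reconstructionbound} for the same $F$ gives the asserted uniqueness.

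\textbf{The pointwise identity; main obstacle.} If every $\Pi_z\tau$ is a continuous function, then $G(z):=(\Pi_zF(z))(z)$ is well defined; using the first bound of \eqref{analyticboundmodel} for test functions centred at $z$ together with $\Pi_z=\Pi_w\Gamma_{wz}$, one checks $|G(w)-(\Pi_zF(z))(w)|\ls\|w-z\|_{\mathfrak{s}}^{\gamma}$ for $w$ near $z$, hence (integrating this pointwise bound against the $L^1$-normalised $\CS^\lambda_z\eta$) $|(G-\Pi_zF(z))(\CS^\lambda_z\eta)|\ls\lambda^\gamma$. Thus $G$ is a continuous representative satisfying \eqref{reconstructionbound}, so by uniqueness $G=\mathcal{R}F$, giving \eqref{reconstructionidentity} and the continuity of $\mathcal{R}F$. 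The genuinely hard point is the increment estimate and, above all, the resummation in the construction step: one must sum over \emph{both} the dyadic scales and the $O(2^{nd_{\mathfrak{s}}})$ grid points at each scale when tested against an arbitrary rescaled test function, without losing powers of $2^n$ — this is exactly where the near-orthogonality of the wavelet basis, the hypothesis $\gamma>0$ (and $\gamma\notin A$), and the compatibility $\Pi_y=\Pi_x\Gamma_{xy}$ all have to be used in concert.
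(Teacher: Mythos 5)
Your overall route --- multiresolution analysis adapted to the parabolic scaling, the algebraic relation $\Pi_y=\Pi_x\Gamma_{xy}$, uniqueness from the special case $F\equiv 0$ --- is exactly what the paper sketches and what \cite{RegStr},\cite{hairer2015introduction} carry out. However, the central estimate you isolate as "the key estimate" is wrong as stated. Since $\mathcal{R}_nF\in V_n\perp W_n$, the coefficient $\langle\delta_nF,\psi^n_y\rangle$ equals $\langle\mathcal{R}_{n+1}F,\psi^n_y\rangle$, and rewriting $\Pi_xF(x)=\Pi_x\Gamma_{xy}F(y)+\Pi_x\bigl(F(x)-\Gamma_{xy}F(y)\bigr)=\Pi_yF(y)+\Pi_x\bigl(F(x)-\Gamma_{xy}F(y)\bigr)$ gives
\[
\langle\delta_nF,\psi^n_y\rangle
\;=\;
\langle\Pi_yF(y),\psi^n_y\rangle
\;+\;
\sum_{x}\langle\psi^n_y,\phi^{n+1}_x\rangle\,
\bigl\langle\Pi_x\bigl(F(x)-\Gamma_{xy}F(y)\bigr),\phi^{n+1}_x\bigr\rangle\,.
\]
Only the second sum enjoys the bound $2^{-n\gamma}2^{-nd_{\mathfrak{s}}/2}$ you computed. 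The first term does not vanish, and the only available control on it is the model bound \eqref{analyticboundmodel}, which gives $|\langle\Pi_yF(y),\psi^n_y\rangle|\ls 2^{-n\alpha}2^{-nd_{\mathfrak{s}}/2}$ with $\alpha=\min A$; since generically $\alpha<0<\gamma$, this dominates your claimed rate. This is not cosmetic: if $|\langle\delta_nF,\psi^n_y\rangle|\ls 2^{-n\gamma}2^{-nd_{\mathfrak{s}}/2}$ held uniformly, the limit $\mathcal{R}F$ would lie in a much better space than $\Cs^\alpha$, which is false (think of $F(z)\equiv\symbol{\Xi}$). The size $2^{-n\alpha}2^{-nd_{\mathfrak{s}}/2}$ is exactly the wavelet-coefficient order of a generic element of $\Cs^\alpha$, so it is sharp.

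The fix is to decouple two distinct estimates that your write-up conflates. Convergence and the $\Cs^\alpha$ membership of $\mathcal{R}F$ come from the size of the main coefficients $\langle\Pi_yF(y),\psi^n_y\rangle$ alone, via the wavelet characterisation of $\Cs^\alpha$; the exponent $\gamma$ plays no role there. The rate $\lambda^\gamma$ appears only when you compare $\mathcal{R}F$ with the \emph{local} approximation $\Pi_zF(z)$, i.e.\ in the coefficients $\langle\mathcal{R}F-\Pi_zF(z),\psi^n_y\rangle$ for $y$ near $z$, where the modelled-distribution bound $\|F(y)-\Gamma_{yz}F(z)\|_\beta\ls\|y-z\|_{\mathfrak{s}}^{\gamma-\beta}$ supplies the extra smallness. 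Your reconstruction-bound paragraph does make this comparison and is correct in spirit, so what is needed is to rewrite the construction paragraph so that it does not infer convergence from a $\gamma$-rate increment bound. (A smaller point: the pointwise-identity argument also implicitly uses a \emph{pointwise} model estimate $|(\Pi_w\tau)(w)|\ls\|\tau\|$ for the negative-order components of $F(w)-\Gamma_{wz}F(z)$, which does not follow from the distributional bound \eqref{analyticboundmodel} alone and must either be assumed or circumvented by first showing $\mathcal{R}F$ is continuous and then letting $\lambda\to 0$ in \eqref{reconstructionbound}.)
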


For a given modelled distribution $F$ the space-time distribution $\mathcal{R}F$ is constructed as the limit $\mathcal{R}F := \lim_{n \rightarrow \infty} \mathcal{R}^{n}F$ where $\mathcal{R}^{n}F$ is built by ``stitching together" the distributions $\{ \left(\Pi_{z}F(z)\right)\}_{z \in \Lambda_{n}}$ where $\Lambda_{n} \subset \R^{d+1}$ is a discrete set of grid points with resolution $2^{-n}$. More precisely, $\mathcal{R}^{n}F$  is defined as
\begin{equation}\label{ConstrRecOp}
\left( \mathcal{R}^{n}F \right)(z) =
\sum_{x \in \Lambda_{n}}
\left( \Pi_{x} F(x) \right)(\psi^{n}_{x})
\psi^{n}_{x}(z)
\end{equation}
where the functions $\{ \psi_x^{n} \}_{x \in \Lambda_{n}}$ are appropriately scaled and translated copies of a fairly regular function\footnote{In particular, the proof of the Reconstruction Theorem in \cite{RegStr},\cite{hairer2015introduction} uses wavelet analysis.}
\footnote{The reader is encouraged to compare \eqref{ConstrRecOp} with the proof of Theorem~\ref{Kolmogorov}.} $\psi$.
Establishing the convergence $\lim_{n \rightarrow \infty} \mathcal{R}^{n}F$ uses the algebraic and analytic conditions imposed by Definitions \ref{defofmodel} and \ref{modeldistdef} in a very direct manner. In fact, the Reconstruction Theorem served as the initial motivation for the abstract setting of regularity structures. A nice exposition of the proof can be found in \cite{hairer2015introduction}.

\medskip
One can also let the models in Theorem \ref{ReconstructionTheorem} vary, then the theorem gives the existence of a reconstruction map $\tilde{\mathcal{R}}$ acting on triples $(\Pi,\Gamma, F) \in \mathcal{M} \ltimes \mathcal{D}^{\gamma}$. The following theorem gives another essential property of the reconstruction operation.
\begin{theorem}\label{continuityofrecon}
Let $\mathcal{T}$ and $\alpha$ be as in Theorem \ref{ReconstructionTheorem}. Then for any $\gamma > 0$ the reconstruction map $\tilde{\mathcal{R}}:\mathcal{M} \ltimes \mathcal{D}^{\gamma} \rightarrow \Cs^{\alpha}$ is continuous.
\end{theorem}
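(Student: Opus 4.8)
The plan is to re-run the construction of $\mathcal{R}$ used in the proof of Theorem~\ref{ReconstructionTheorem}, but this time keeping track of \emph{differences} of all the objects involved, and to deduce continuity from a Lipschitz-type estimate on bounded subsets of $\mathcal{M}\ltimes\mathcal{D}^\gamma$ (which is the natural quantitative form of continuity on this nonlinear metric space). Recall first that for a \emph{fixed} model $(\Pi,\Gamma)$ the map $F\mapsto \mathcal{R}F$ is linear, and inspection of \eqref{ConstrRecOp} together with the convergence argument shows that for every compact $\mathfrak{K}$ there is a slightly larger compact $\bar{\mathfrak{K}}$ with
\[
\|\mathcal{R}F\|_{\Cs^\alpha(\mathfrak{K})} \ls \bigl(1+\|(\Pi,\Gamma)\|_{\bar{\mathfrak{K}}}\bigr)\,\|F\|_{\gamma;\bar{\mathfrak{K}}}\,,
\]
since $\Pi_x$ enters \eqref{ConstrRecOp} only once and the remaining structure is absorbed into the $\mathcal{D}^\gamma$-norm of $F$. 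This gives boundedness; the point is to upgrade it to joint continuity.

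I recall the metric on $\mathcal{M}\ltimes\mathcal{D}^\gamma$: a neighbourhood of $(\Pi,\Gamma,F)$ is controlled by the model distance coming from \eqref{analyticboundmodel} together with the \emph{joint} modelled-distribution distance $\|F;\bar F\|_{\gamma;\mathfrak{K}}$ obtained by replacing, in Definition~\ref{modeldistdef}, the quantity $F(x)$ by $F(x)-\bar F(x)$ and $F(x)-\Gamma_{xy}F(y)$ by $\bigl(F(x)-\bar F(x)\bigr)-\bigl(\Gamma_{xy}F(y)-\bar\Gamma_{xy}\bar F(y)\bigr)$. Given two triples $(\Pi,\Gamma,F)$ and $(\bar\Pi,\bar\Gamma,\bar F)$ in a fixed bounded set, I want to bound $\bigl|(\mathcal{R}F-\bar{\mathcal{R}}\bar F)(\CS_z^\lambda\eta)\bigr|$ by $\lambda^\alpha$ times the sum of these two distances, uniformly over $z$ in a compact, $\lambda\in(0,1]$ and $\eta\in B_r$. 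Writing $\mathcal{R}F-\bar{\mathcal{R}}\bar F=\lim_{n\to\infty}(\mathcal{R}^nF-\bar{\mathcal{R}}^n\bar F)$ and telescoping in $n$ reduces the problem to (i) the contribution of $\Pi_zF(z)-\bar\Pi_z\bar F(z)$, and (ii) the increments $\bigl(\mathcal{R}^{m+1}F-\mathcal{R}^mF\bigr)-\bigl(\bar{\mathcal{R}}^{m+1}\bar F-\bar{\mathcal{R}}^m\bar F\bigr)$. For (i) one splits $\Pi_zF(z)-\bar\Pi_z\bar F(z)=(\Pi_z-\bar\Pi_z)F(z)+\bar\Pi_z(F(z)-\bar F(z))$; testing against $\CS_z^\lambda\eta$ both pieces are $\ls\lambda^{\min A}=\lambda^\alpha$ times (respectively) the model distance and $\|F;\bar F\|$, which is exactly the source of the $\Cs^\alpha$ regularity (and of the loss from $\gamma$ down to $\alpha$).

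For (ii), each increment $\mathcal{R}^{m+1}F-\mathcal{R}^mF$ is, via the refinement relations for the regular function $\psi$, a sum over pairs of neighbouring grid points $x,y\in\Lambda_m$ of terms built from $\bigl(\Pi_xF(x)-\Pi_yF(y)\bigr)$ tested against rescaled copies of $\psi$; by the algebraic relation \eqref{algconditionmodel} this equals $\bigl(\Pi_x(F(x)-\Gamma_{xy}F(y))\bigr)$, and \eqref{analyticboundmodel} plus Definition~\ref{modeldistdef} yield the decay $\ls 2^{-m\gamma}$ that makes the telescoping series summable. To estimate the difference of two such increments I insert cross terms,
\[
\Pi_x\bigl(F(x)-\Gamma_{xy}F(y)\bigr)-\bar\Pi_x\bigl(\bar F(x)-\bar\Gamma_{xy}\bar F(y)\bigr)
=(\Pi_x-\bar\Pi_x)\bigl(F(x)-\Gamma_{xy}F(y)\bigr)+\bar\Pi_x\Bigl[\bigl(F(x)-\Gamma_{xy}F(y)\bigr)-\bigl(\bar F(x)-\bar\Gamma_{xy}\bar F(y)\bigr)\Bigr],
\]
the first term being bounded by the model distance times $\|F\|_{\gamma;\bar{\mathfrak{K}}}$ and the second by $\|(\bar\Pi,\bar\Gamma)\|_{\bar{\mathfrak{K}}}$ times $\|F;\bar F\|_{\gamma;\bar{\mathfrak{K}}}$, in both cases retaining the factor $2^{-m\gamma}$. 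Summing the geometric series in $m$, restricting the grid sums to the $\mathcal{O}((\lambda 2^m)^{d_{\mathfrak{s}}})$ points relevant to $\CS_z^\lambda\eta$, and passing from dyadic $\lambda=2^{-m}$ to general $\lambda\in(0,1]$ by the usual interpolation gives the claimed bound; uniqueness in Theorem~\ref{ReconstructionTheorem} ensures the limiting object is indeed $\mathcal{R}F-\bar{\mathcal{R}}\bar F$.

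The main obstacle is bookkeeping rather than a new idea: because $F$ lives in a space $\mathcal{D}^\gamma[(\Pi,\Gamma)]$ that itself depends on the model, there is no honest bilinear structure to exploit, so every estimate must be phrased through the joint metric on $\mathcal{M}\ltimes\mathcal{D}^\gamma$ and one must verify that all error terms produced by the cross-term expansions are genuinely controlled by it, uniformly on bounded sets. The delicate point is to organise the telescoping so that the $2^{-m\gamma}$ gains survive the differencing --- this is precisely where one uses that the deficiency measured by \eqref{reconstructionbound} is against $\Pi_zF(z)$ and that a difference of ``local models'' $\Pi_xF(x)-\bar\Pi_x\bar F(x)$ obeys the same homogeneity estimates as a single one.
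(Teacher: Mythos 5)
The paper itself does not prove Theorem~\ref{continuityofrecon}: it is stated without proof, with the reader implicitly referred to \cite{RegStr} and \cite{hairer2015introduction}. So there is no ``paper's own proof'' to compare against, and your attempt has to be judged against the argument in those references.

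Your sketch is essentially the standard proof from \cite[Thm.~3.10]{RegStr} extended to varying models, and the overall structure is sound. The key moves are all present: (i) realising $\mathcal{R}F$ via the dyadic approximants $\mathcal{R}^nF$ of \eqref{ConstrRecOp} and telescoping, (ii) rewriting the increment at level $m$ via \eqref{algconditionmodel} in terms of $\Pi_x\bigl(F(x)-\Gamma_{xy}F(y)\bigr)$ so that Definition~\ref{modeldistdef} yields the $2^{-m\gamma}$ gain, and (iii) inserting the cross-term decomposition so that one factor is controlled by the model pseudometric and the other by the joint $\|F;\bar F\|_{\gamma}$ distance, each retaining the $2^{-m\gamma}$. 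You also correctly flag the conceptual subtlety that prevents this from being a trivial bilinearity statement (the space $\mathcal{D}^\gamma$ depends on the model), and that the bounds must be \emph{local Lipschitz on bounded sets} rather than global --- that is exactly the right quantitative formulation of continuity here. The counting of the $\mathcal{O}\bigl((\lambda 2^m)^{d_{\mathfrak{s}}}\bigr)$ relevant grid points and the passage from dyadic to general $\lambda$ are the remaining standard bookkeeping. In short: no gap, and the approach is the canonical one; if you want to tighten the write-up, the one place to be more careful is your preliminary display $\|\mathcal{R}F\|_{\Cs^\alpha(\mathfrak{K})}\ls\bigl(1+\|(\Pi,\Gamma)\|_{\bar{\mathfrak{K}}}\bigr)\|F\|_{\gamma;\bar{\mathfrak{K}}}$, where the dependence on the model norm should really be made precise from \eqref{ConstrRecOp} rather than asserted, but this does not affect the validity of the continuity argument that follows.
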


\subsubsection{Multiplication}\label{subsec: multiplication}
We aim to \emph{lift} the non-linear fixed point problem \eqref{e:Picard1} to the level of modelled distributions. 
This will take the form 
\begin{align}\label{FixedPointAbst1}
\Phi = - \KKg \Phi^3 + \KKg \symbol{\Xi} \;,
\end{align}
where $\KKg$  is a linear operator acting on a space of modelled distribution corresponding to the convolution with the heat kernel $K$. We will discuss the definition of $\KKg$ in the next section. 
For now we start with the definition of the operation $U \mapsto U^3$, i.e. we have to define multiplication of certain modelled distributions.

\medskip
The  product of modelled distributions is defined pointwise on the level of the regularity structure.  
\def\Citeprod{\cite[Def. 41]{RegStr}}
\begin{definition}[\Citeprod]\label{def:prod}
A mapping $T \times T \ni (a,b) \mapsto a  b \in T$ is called a \emph{product} if it is bilinear and 
\begin{itemize}
\item It respects the orders in the sense that for $a \in T_\alpha$ and $b \in T_\beta$ we have $a  b \in T_{\alpha +\beta}$,
\item we have $\symbol{\mathbf{1}} \, a = a \, \symbol{\mathbf{1}} = a $ for all $a \in T$.
\end{itemize}
\end{definition}
We have already seen the actual construction of this  product as a part of the construction of the regularity structure. In the case of the 
regularity structure for $\Phi^4_3$ we have, for example 
\begin{align*}
\symbol{\mathbf{X}^k \;\mathbf{X}^{\ell}} = \symbol{\mathbf{X}^{k+\ell}} ,\quad \<s1> \; \<s1> = \<s2>, \quad \<s30> \; \<s1> = \<s31>  , \ldots  
\end{align*}
and the product is extended  in a bilinear way.
It is important to observe, that many products that could be built from the entries in table \ref{Phi43symbols} do not have a natural definition. 
For example, we have not included a symbol for  $\<s3>  \; \<s2>$ or for any product involving $\symbol{\Xi}$ in the regularity structure. This is because the regularity structure is tailor-built to include only those 
symbols that we actually need in the construction of the fixed point map.  In the same way, we will set $\symbol{\mathbf{X}^k \mathbf{X}^\ell} = 0$ as soon as the order $|k + \ell |_{\mathfrak{s}} \geq 2 $.
In order to satisfy the assumption of Definition~\ref{def:prod} we can always define such products to 
be zero.

\medskip

Our aim for this section is to prove a ``multiplicative" inequality in the spirit of Theorem~\ref{multiplication} for modelled distributions. To this end we need to make sure that 
the product  is compatible with spatial translations, represented by the group $G$.  More precisely, we need to assume that the product is $\gamma$-\emph{regular} which means that the identity 
\begin{align}\label{e:ReGular}
\Gamma(a \; b) = \Gamma a \;\Gamma b \;.
\end{align}

holds for all  $\Gamma \in G$  and all ``relevant'' $a, b \in T$ of order $\leq \gamma$, where $\gamma$ is the order up to which we aim to describe the product   \footnote{See \cite[Def. 4.6]{RegStr} for precise definition.}. 
In the case of  $\bar{\mathcal{T}} $, the regularity structure of polynomials,   this condition reduces to the trivial identity $(x - h)^k \, (x-h)^\ell = (x-h)^{k + \ell}$. In the recursive definition of the canonical model \eqref{e:ReGular} in conjunction with the action of the integration map $\symbol{\mathcal{I}}$ completely determines the action of the $\Gamma_{x y}$. However, it is non-trivial to construct the renormalised models $(\hat{\Pi}^\delta, \hat{\Gamma}^\delta)$ in such a 
way that \eqref{e:ReGular} remains true.

\medskip
In order to state the main result of this section, we need to introduce one more notion -- let $F \in \CD^\gamma$ be a modelled distribution as defined in \eqref{modeldistdef}. We will say that $F \in \CD^\gamma_\alpha$ if $F$ takes values in a subspace of $T$ which is spanned by symbols of order $\geq \alpha$. Note that a non-trivial modelled distribution must have a component of order $\leq 0$ so that necessarily $\alpha \leq 0$.
\def\MultRef{\cite[Thm. 4.7]{RegStr}}
\begin{theorem}[\MultRef] \label{thm:abstractSchaud}
Let $F,G$ be modelled distributions over a regularity structure endowed with a product as explained above. If $F \in \CD^{\gamma_1}_{\alpha_1}$ and $G \in \CD^{\gamma_2}_{\alpha_2}$ we have 
 $F  G \in \mathcal{D}^\gamma_{\alpha_1 + \alpha_2}$ for  $\gamma = (\gamma_1 + \alpha_2) \wedge (\gamma_2 + \alpha_1)$. Furthermore, we have for every compact set $\mathfrak{K}$
\begin{align}
\| F G\|_{\gamma, \mathfrak{K}} \ls \| F\|_{\gamma_1,\mathfrak{K}} \, \| G\|_{\gamma_1, \mathfrak{K}}\;.
\end{align}
\end{theorem}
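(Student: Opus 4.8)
The plan is to prove the product estimate by working pointwise on the level of the regularity structure, closely mirroring the proof of the classical multiplication theorem (Theorem~\ref{multiplication}), with the abstract structure replacing the Littlewood--Paley machinery. First I would record the algebraic bookkeeping: if $F \in \CD^{\gamma_1}_{\alpha_1}$ and $G \in \CD^{\gamma_2}_{\alpha_2}$, then $(FG)(x) := F(x)\,G(x)$, and by Definition~\ref{def:prod} the product of a symbol of order $a \ge \alpha_1$ with one of order $b \ge \alpha_2$ has order $a+b \ge \alpha_1+\alpha_2$, so $FG$ indeed takes values in the span of symbols of order $\ge \alpha_1+\alpha_2$. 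One must also truncate: we discard all components of $(FG)(x)$ of order $\ge \gamma := (\gamma_1+\alpha_2)\wedge(\gamma_2+\alpha_1)$, so that $FG$ is genuinely $T^{-}_{\gamma}$-valued. The choice of $\gamma$ is exactly what makes the cross terms below summable.

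The heart of the argument is the difference bound. Fix $x,y$ with $\|x-y\|_{\s} \le 1$ and a homogeneity $\beta < \gamma$; we must estimate $\|(FG)(x) - \Gamma_{xy}(FG)(y)\|_\beta$. Using $\gamma$-regularity \eqref{e:ReGular}, $\Gamma_{xy}(F(y)G(y)) = (\Gamma_{xy}F(y))(\Gamma_{xy}G(y))$, so we can write the standard telescoping identity
\begin{equation*}
F(x)G(x) - (\Gamma_{xy}F(y))(\Gamma_{xy}G(y))
= \bigl(F(x)-\Gamma_{xy}F(y)\bigr)G(x) + (\Gamma_{xy}F(y))\bigl(G(x)-\Gamma_{xy}G(y)\bigr).
\end{equation*}
Then I would expand each factor into its homogeneous components, project onto $T_\beta$, and use the defining bounds of $\CD^{\gamma_i}$ together with the boundedness of $\Gamma_{xy}$ on each $T_\alpha$ from \eqref{analyticboundmodel}. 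For the first term one pairs a component of $F(x)-\Gamma_{xy}F(y)$ of order $\beta_1$ (contributing $\|x-y\|_{\s}^{\gamma_1-\beta_1}$) with a component of $G(x)$ of order $\beta_2 \ge \alpha_2$ (uniformly bounded), with $\beta_1+\beta_2 = \beta$; this gives a factor $\|x-y\|_{\s}^{\gamma_1-\beta_1} = \|x-y\|_{\s}^{\gamma_1+\beta_2-\beta} \ge \|x-y\|_{\s}^{\gamma_1+\alpha_2-\beta} \ge \|x-y\|_{\s}^{\gamma-\beta}$ (using $\|x-y\|_{\s}\le 1$ and $\beta_2\ge\alpha_2$). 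The second term is handled symmetrically, producing $\|x-y\|_{\s}^{\gamma_2+\alpha_1-\beta} \ge \|x-y\|_{\s}^{\gamma-\beta}$. Summing over the finitely many pairs $(\beta_1,\beta_2)$ yields $\|(FG)(x)-\Gamma_{xy}(FG)(y)\|_\beta \lesssim \|F\|_{\gamma_1;\mathfrak{K}}\|G\|_{\gamma_2;\mathfrak{K}}\,\|x-y\|_{\s}^{\gamma-\beta}$. The local bound $\sup_{x}\|(FG)(x)\|_\beta \lesssim \|F\|_{\gamma_1;\mathfrak{K}}\|G\|_{\gamma_2;\mathfrak{K}}$ is immediate from bilinearity of the product and the finite-dimensionality of each $T_\alpha$.

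The main obstacle is not any single estimate but making the bookkeeping airtight: one must check that only finitely many homogeneities are involved (guaranteed since $A$ has no accumulation points and is bounded below, so $[\,\alpha_1+\alpha_2,\gamma)\cap A$ is finite), that the truncation at order $\gamma$ does not destroy the difference bound (it does not, because $\Gamma_{xy}$ preserves the filtration $T^{-}_{\alpha}$, so truncating commutes appropriately with the estimates), and crucially that the exponents $\gamma_1+\beta_2-\beta$ and $\gamma_2+\beta_1-\beta$ are always $\ge \gamma-\beta > 0$ so the powers of $\|x-y\|_{\s}$ point the right way — this is precisely where $\beta_i \ge \alpha_i$ and the definition of $\gamma$ are used. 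A secondary subtlety worth a remark is that the product and hence $FG$ genuinely depend on $\gamma$-regularity of the product with respect to the \emph{particular} model's structure group; I would note that this holds for the canonical model by construction and is imposed as a requirement on renormalized models. Once these points are in place the inequality in the statement follows, with the constant depending only on $\mathfrak{K}$ through $\|F\|_{\gamma_1;\mathfrak{K}}$ and $\|G\|_{\gamma_2;\mathfrak{K}}$; I would replace the (evidently mistyped) $\|G\|_{\gamma_1,\mathfrak{K}}$ on the right-hand side by $\|G\|_{\gamma_2,\mathfrak{K}}$.
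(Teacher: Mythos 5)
Your proof is correct and follows the standard argument for this result; note that these lecture notes do not reprove Theorem~\ref{thm:abstractSchaud} but simply cite \cite[Thm.~4.7]{RegStr}, and your telescoping decomposition via $\gamma$-regularity, the pairing of homogeneous components with exponent bookkeeping $\gamma_1 + \beta_2 - \beta \ge \gamma_1 + \alpha_2 - \beta \ge \gamma - \beta$, and the observation that $\Gamma_{xy}$ respects the filtration so truncation at order $\gamma$ is harmless are exactly the ingredients of that proof. One small slip: the displayed chain $\|x-y\|_{\s}^{\gamma_1+\beta_2-\beta} \ge \|x-y\|_{\s}^{\gamma_1+\alpha_2-\beta} \ge \|x-y\|_{\s}^{\gamma-\beta}$ should have $\le$ throughout (since $\|x-y\|_{\s}\le 1$ and the exponents are increasing), but the underlying exponent inequalities are the right ones and the conclusion you draw from them is correct. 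You are also right that the $\|G\|_{\gamma_1,\mathfrak{K}}$ in the stated estimate is a typo for $\|G\|_{\gamma_2,\mathfrak{K}}$.
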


\begin{remark}
Unlike Theorem~\ref{multiplication}, this theorem does not require any condition on the exponents. Indeed, the product is always well-defined pointwise,  independently of the choice of $\gamma_1$ and $\gamma_2$. However, we encounter a condition on $\gamma$ when applying the \emph{reconstruction operator}. Although the product is always defined as an abstract expansion,  it is only for $\gamma>0$ that this expansion uniquely describes  a \emph{real} distribution.
\end{remark}
\begin{example}
We have seen above in \eqref{liftofholder} that any $\Cs^\gamma$ function $f$ can be lifted naturally to a modelled distribution in $\CD^\gamma$ (which should be denoted by $\CD^\gamma_0$ here)  by setting 
\begin{equation}
F(z) = 
\sum_{ |k|_{\mathfrak{s}} \le \gamma }
\frac{1}{k!}
D^{k}f(z)
\Xk  .
\end{equation}
If we have another $\Cs^\gamma$ function $g$ which is lifted to $G$ in the same way we get 
\begin{align*}
F\; G(z) = \sum_{|k|_{\mathfrak{s}} \le \gamma} \frac{1}{k!} \sum_{j} \binom{k}{j}\big( D^j f(z) D^{k-j} g(z) \big)  \Xk \;,
\end{align*} 
which is nothing but Leibniz rule. Note, that here we have truncated the expansion to involve only those terms of order $\leq \gamma$. Indeed, the function $F G$ is only of class $\Cs^\gamma$ and polynomials of order higher than $\gamma$ give no information about the local behaviour of this function.
\end{example}
\begin{example}
Now we can finally  explain up to which order we need to expand $\Phi$ in order to solve the abstract fixed point problem for $\Phi^4_3$.  As in \eqref{e:ansatzPhi} we make the ansatz
\begin{equation}\label{Phi43ansatz}
\Phi(z) =  \<s1> + \Phi_{\symbol{\mathbf{1}}}(z)\symbol{\mathbf{1}}   - \<s30>  -3  \Phi_{\symbol{\mathbf{1}}}(z)\<s20>  + \langle \Phi_{\symbol{\mathbf{X}}}(z), \symbol{\mathbf{X} }\rangle.
\end{equation}
The term of lowest order in this description is the symbol $\<s1>$ which is of order $-\frac12 -\kappa$. Then we get
\begin{equation}\label{Phicubed}
\Phi^3 :=   \<s3> + 3 \Phi_{\symbol{\mathbf{1}}} \<s2> - 3 \<s32>  + 3 \Phi_{\symbol{\mathbf{1}}}^2 \<s1> - 6 \Phi_{\symbol{\mathbf{1}}} \<s31> - 9 \Phi_{\symbol{\mathbf{1}}}\<s22> +3 \langle \Phi_{\symbol{\mathbf{X}}} , \<s2> \symbol{\mathbf{X}} \rangle+   \Phi_{\symbol{\mathbf{1}}}^3 \symbol{\mathbf{1}}    \;,
\end{equation}
where we included only terms of non-positive order.
Using Theorem~\ref{multiplication} we can conclude that for $\Phi \in \CD^\gamma_{\alpha}$ we have $\Phi^3 \in \CD^{\gamma - 2 \alpha}_{3\alpha}$. This statement is always true for any $\gamma$, but in order to have a meaningful \emph{reconstruction} of $\Phi^3$ the exponent $\gamma + 2 \alpha$ needs to be strictly positive. As $\alpha = - \frac12 -\kappa$  we need to describe $\Phi$ to order at least $\gamma > 1 + 2\kappa$. 
\end{example}

\subsection{Integration}\label{sec: integration}
At this stage, the only operation missing to define the fixed point operator \eqref{FixedPointAbst}  is the \emph{integration map}. Recall that above in Theorem~\ref{Schaudestimate} we had stated that convolution with the heat kernel $K$ can be defined for quite general distributions $f \in \Cs^\alpha$ and that (for $\alpha \notin \Z$) this operation improves the parabolic H\"older regularity by $2$. This result is closely related to the fact that $K$ is a singular kernel of order $-d_{\mathfrak{s}}+2$, i.e. that $K$ is a smooth function on $\R \times \R^d \setminus\{ 0\}$  with a well-controlled singularity at the origin. Our aim for this section is to define an analogue map $\KKg$ that maps modelled distribution $F \in \CD^\gamma_\alpha$ to \rf{$\CD^{\gamma+2}_{(\alpha+2) \wedge 0}$} \refcomment{REF20}. To make some expressions easier,  we will from now on use a convention (slightly inconsistently with the previous sections but consistently with the notation used in \cite{RegStr}) to give a new interpretation to the kernel $K$. We will replace the parabolic heat kernel by a kernel $K$ which satisfies
\begin{itemize}
\item[1.)] $|D^{k } K(z)| \lesssim \| z \|_{\mathfrak{s}}^{-d_{\mathfrak{s}}+2 -|k|_{\mathfrak{s}} }$ for all multi-indices $k$ (recall the definition of the parabolic dimension $d_{\mathfrak{s}} = 2+d$). %and $|k|_{\mathfrak{s}} = 2k_0 + k_1 + \ldots k_d$ for any multi-index $k = (k_0, \ldots ,k_d)$).
\item[2.)] $K(z) =0$ for all  $z = (t,x)$ with $t<0$.
\item[3.)] $K$ has compact support in $\{z \colon \| z\|_{\mathfrak{s}}< 1\}$.
\item[4.)] $\int K(z)  z^k \, dz = 0$ for all multi-indices $k$ with $|k|_{\mathfrak{s}} < \gamma$.
\end{itemize}
Of course, the Gaussian heat kernel satisfies  assumptions 1.) and 2.) but not 3.) and 4.). However, for any $\gamma>0$ it is possible to add a smooth function $R_\gamma$ to the Gaussian kernel such that one obtains a kernel that 
also satisfies the assumptions 3.) and 4.). The convolution  with a smooth function $R_\gamma$ is an infinitely smoothing operation which can easily be dealt with separately. Therefore, for the rest of these notes we will assume that $K$ satisfies 
all of these four assumptions, neglecting the extra terms that come from the convolution with $R_\gamma$.

\medskip

\begin{example}\label{ex:integrationPolys}
Let us briefly discuss how to formulate a version of the classical Schauder estimate for $\mathcal{C}_{\mathfrak{s}}^{\gamma}$ functions (Theorem~\ref{Schaudestimate}) using the regularity structure of abstract polynomials defined in Sec \ref{abstractpoly}. Let $f \colon \R \times \R^d \to \R$ be 
of regularity $\mathcal{C}^\gamma_{\mathfrak{s}}$ for some $\gamma \in (0,\infty) \setminus \Z$ and let $F$ be its canonical lift to $\mathcal{D}^{\gamma}$.  We want to define a map $\KKg$ acting on $\mathcal{D}^{\gamma}$ which represents convolution with the  kernel $K$. A natural definition would be
\begin{align}\label{e:Int1}
\KKg F(z) =  \sum_{|k|_{\s} < \gamma +2} 
\frac{\Xk}{k!}
 \int_{\R \times \R^d} D^{k} K(z-y) \underbrace{\mathcal{R} F(y)}_{=f(y)} \, dy
\end{align}
but it is not obvious why this integral converges. 
Indeed, in general $|D^k K(z)| \sim |z|^{2 - d_{\mathfrak{s}} -|k|_{\mathfrak{s}}}$ which fails to be integrable for $|k|_{\mathfrak{s}} \geq 2$.
However, if we use the regularity of $F$ and replace $\mathcal{R} F(y)$ by $\mathcal{R} F(y) - \Pi_z F(y)$ (i.e. if we subtract the Taylor polynomial around $z$) we obtain a convergent expression
\begin{align}\label{e-def-Ng}
\Ng F(z) := 
 \sum_{|k|_{\s} < \gamma +2} 
\frac{\Xk}{k!}  \int_{\R \times \R^d}  \underbrace{D^{k} K(z-y)}_{\ls \|z-y\|_{\mathfrak{s}}^{2 - d_{\mathfrak{s}} - |k|_{\mathfrak{s}}  }}  \underbrace{\big( \mathcal{R} F(y)  -  \Pi_z F(y) \big)}_{\|z-y\|_{\mathfrak{s}}^\gamma } \, dy  \;.
\end{align}

\medskip

It remains to discuss, how to interpret the integrals of $ D^{k} K(z - \cdot)  $ against $\Pi_z F  $. Note that this expression depends only \emph{locally} on $F$ at the point $z$ and it is completely determined by   
$\int D^{k} K(z) \, z^{\ell} dz $ for finitely many $k$ and $\ell$. But of course, (some of) these integrals still fail to converge absolutely. However, using the formal integration by part
\begin{equation*}
\int_{\R \times \R^d} D^{(k)} K(z) z^{\ell} \, dz = (-1)^{|k|} \int_{\R \times \R^d}  K(z) D^{(k)} z^{\ell } \, dz  ,
\end{equation*}
it is not hard to show that these integrals converge if they are interpreted as principal values
\begin{equation}\label{integral-against-polys}
 \lim_{\eps \to 0}  \int_{|z| > \eps} z^\ell D^{k} K(z) \, dz  \;,
\end{equation}
and furthermore, using our convenient assumption that $K$ integrates to zero against polynomials, one can see that 
these limits are zero.
We can therefore define 
%the linear operator $R_\gamma \colon T \to T$ by its action on the monomials 
%\begin{equation}\label{e-def-R}
%R_\gamma \colon \symbol{\mathbf{X}^{\ell}} \mapsto \sum_{k < \gamma+2} \frac{a_{k,\ell }}{k!} \Xk   \;,
%\end{equation} 
 $\KKg = \Ng$ and the operator $\KKg$ defined this way does indeed map $\mathcal{D}^\gamma_0$ to $\mathcal{D}^{\gamma+2}_0$.

\end{example}

\medskip

Of course our main focus is on models and regularity structures that are larger than $\bar{\mathcal{T}}$ which allow us to work with singular distributions. In this case the integration map $\KKg$ will in general consist of three different components 
\begin{equation*}
\KKg = \symbol{\mathcal{I}} + \mathcal{J}_\gamma  + \Ng  \;.
\end{equation*}
We have already encountered the operator $\Ng$ in Example~\ref{ex:integrationPolys} above. The defining equation \eqref{e-def-Ng}  remains meaningful if it  is interpreted in a distributional sense. The operators $\symbol{\mathcal{I}}$ and $\mathcal{J}_\gamma$ correspond to the additional information provided by non-polynomial symbols. Both operators vanish when applied to abstract polynomials.

\medskip
The operator $\symbol{\mathcal{I}}$ takes values in the span of the abstract symbols $\mathfrak{T}$ which are \emph{not} polynomials.
Like the multiplication, it is defined as a linear operator on $T$ and its definition was essentially part of the recursive construction of the regularity structure $\mathcal{T}$. We have, for example 
\begin{align*}
\symbol{\mathcal{I} \Xi} = \<s1> , 
\end{align*}
etc. Note in particular, that (unlike a convolution operator) $\symbol{\mathcal{I}}$ acts \emph{locally} on modelled distributions.  One important property is that by definition $\symbol{\mathcal{I}} $ increases the order of every $a \in T_\alpha$ by $2$.
As for the product, we do not have to give a non-trivial interpretation for $\symbol{\mathcal{I}} \tau$ for all $\tau$. Indeed, in order to describe our solution to a certain regularity it is sufficient to keep those basis elements 
of order $< \gamma$.

\medskip
Of course,  at this stage the definition of the \emph{abstract integration map} $\symbol{\mathcal{I}}$ has nothing to do with the kernel $K$. The connection with $K$ is established in the choice of the model. We had already discussed this issue in the context of the 
\emph{canonical model} in \eqref{canonicalpidef}. We will now turn the relevant property of the canonical model into a definition.
\begin{definition}\label{def: admissible}
A model is \emph{admissible}  if  we have $(\Pi_z\mathbf{X}^k)(y) = (y-z)^k$ and
\begin{align*}
\left( \Pi_{z}\symbol{\mathcal{I}}[\tau]\right)(\bar{z})
&:= \int_{\R^{d+1}}dy\ K(\bar{z} - y ) \left( \Pi_{z}\tau \right)(y)\\
&\hspace{1cm}
- 
\sum_{
\substack{k \\
|k|_{\mathfrak{s}} < |\tau| + 2
}
}
\frac{
(\bar{z} - z)^{k}}
{k!}
\int_{\R^{d+1}}dy\ (D^{k}K)(z - y ) \left( \Pi_{z}\tau \right)(y) \;.
\end{align*}
We will denote by $\mathcal{M}_{0} \subset \mathcal{M}$ the space of admissible models. 
\end{definition}
The construction of \emph{canonical models} we explained above, automatically produces admissible
models\footnote{Of course we discussed the construction of canonical models in the context of the Gaussian heat kernel, but the construction goes through unchanged if it is replaced by our modified kernel.}.
But  it is non-trivial to perform the renormalization such that the models  remain admissible. 

\medskip 
The operator $\mathcal{J}_\gamma$ takes values in the abstract polynomials. It is the analog of the integrals \eqref{integral-against-polys} which in the case of a general regularity structure cannot be 
removed by a convenient choice of kernel. The operator is defined as 
\begin{equation}\label{e:DefJ}
\mathcal{J}_\gamma F (z) := \sum_{|k|_{\s} < \alpha +2} 
\frac{\Xk}{k!}
\int D^{(k)} K(z-y)  \;
	\Pi_z F  (y)
dy \;,
\end{equation}
%  Note, that formally 
% we have the natural identity
% \begin{align*}
% ( \mathcal{J}_\gamma  + \Ng) F(z) =  
% \sum_{|k|_{\s} < \gamma +2} 
% \frac{\Xk}{k!}
% \int D^{(k)} K(z-y)  \;
% 	\mathcal{R} F  (y)\;,
% \end{align*}
% %
% resembling \eqref{e:Int1}.

\medskip
With these definitions in place we have the following result.
\begin{theorem}
Let $\mathcal{T}$ be a regularity structure endowed with an admissible model and assume that $\gamma \notin A$. Then the operator $\KKg$ is compatible with integration against the kernel $K$ in the sense that
\begin{equation}\label{reconstructionintegration}
\mathcal{R} \KKg F = K \star \mathcal{R}F \;.
\end{equation}
Furthermore $\KKg$ maps $\mathcal{D}^\gamma_\alpha$ into \rf{$\mathcal{D}^{\gamma+2}_{(\alpha+2) \wedge 0}$} and we have for every compact $\mathfrak{K}$ \refcomment{REF21}
\begin{equation*}
\| \KKg F \|_{\gamma +2, \mathfrak{K}} \ls \| F\|_{\gamma, \bar{\mathfrak{K}}} \;.
\end{equation*}
where $\bar{\mathfrak{K}}  = \{ z \colon \;\inf_{\bar{z} \in \mathfrak{K}} \| z - \bar{z} \|    \leq 1 \}$.

\end{theorem}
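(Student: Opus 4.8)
The plan is to reduce the statement to three pieces that have essentially already been assembled above: (i) verification that $\KKg F = \symbol{\mathcal{I}} F + \mathcal{J}_\gamma F + \Ng F$ is a well-defined element of $\mathcal{D}^{\gamma+2}_{(\alpha+2)\wedge 0}$, (ii) the quantitative bound $\|\KKg F\|_{\gamma+2,\mathfrak{K}} \ls \|F\|_{\gamma,\bar{\mathfrak{K}}}$, and (iii) the reconstruction identity $\mathcal{R}\KKg F = K\star \mathcal{R}F$. First I would fix a compact $\mathfrak{K}$, enlarge it to $\bar{\mathfrak{K}}$ as in the statement, and note that because $K$ has compact support in $\{\|z\|_{\mathfrak{s}}<1\}$ (assumption 3.), every integral defining $\KKg F(z)$ for $z\in\mathfrak{K}$ only sees $\mathcal{R}F$ and $\Pi_z F(y)$ for $y\in\bar{\mathfrak{K}}$, so all estimates localize. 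The target-space statement is checked componentwise: $\symbol{\mathcal{I}}$ raises order by exactly $2$ by construction, $\mathcal{J}_\gamma$ and $\Ng$ take values in abstract polynomials of parabolic degree $<\alpha+2$ and $<\gamma+2$ respectively, and one discards any component of order $\ge 0$ that would arise below order $(\alpha+2)\wedge 0$ — but by construction of $\mathcal{J}_\gamma$ in \eqref{e:DefJ} the sum runs over $|k|_{\mathfrak{s}}<\alpha+2$ precisely so that only polynomial symbols of order $<\alpha+2\le 2$ appear, matching the claimed $(\alpha+2)\wedge 0$ lower bound.

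Next I would establish the two defining bounds of Definition \ref{modeldistdef} for $\KKg F$ at level $\gamma+2$. For the ``diagonal'' bound $\sup_{x}\|\KKg F(x)\|_\beta$ with $\beta<\gamma+2$: the non-polynomial part $\symbol{\mathcal{I}}F(x)$ is bounded directly by $\|F(x)\|_{\beta-2}\le\|F\|_{\gamma,\mathfrak{K}}$; the polynomial coefficients coming from $\mathcal{J}_\gamma$ and $\Ng$ are exactly the integrals displayed in \eqref{e:DefJ} and \eqref{e-def-Ng}, and the underbraced pointwise estimates there ($|D^kK(z-y)|\ls\|z-y\|_{\mathfrak{s}}^{2-d_{\mathfrak{s}}-|k|_{\mathfrak{s}}}$ from assumption 1., against $\|\mathcal{R}F(y)-\Pi_zF(y)\|\ls\|z-y\|_{\mathfrak{s}}^\gamma$ which follows from the Reconstruction Theorem \ref{ReconstructionTheorem}, or against the local pairings $\langle \Pi_zF(y),\,\cdot\,\rangle$ bounded via \eqref{analyticboundmodel}) make the $y$-integral converge to something of size $\ls\|z-y\|_{\mathfrak{s}}^{\gamma+2-|k|_{\mathfrak{s}}}$ integrated over $\|z-y\|_{\mathfrak{s}}<1$, which is finite and controlled by $\|F\|_{\gamma,\bar{\mathfrak{K}}}$. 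For the increment bound
\[
\|\KKg F(x)-\Gamma_{xy}\KKg F(y)\|_\beta \ls \|x-y\|_{\mathfrak{s}}^{\gamma+2-\beta}\,\|F\|_{\gamma,\bar{\mathfrak{K}}},
\]
one writes $\Gamma_{xy}$ on each of the three components, uses the compatibility of $\symbol{\mathcal{I}}$ with the structure group (the recursive relation exemplified in Section \ref{regphi43}, which forces the polynomial correction on $\symbol{\mathcal{I}}[\tau]$ to be exactly $\mathcal{J}_\gamma$-type terms), and reduces the estimate to a telescoping of kernel integrals of $D^kK(x-\cdot)-D^kK(y-\cdot)$; splitting the $y$-region into the ball $\|z-x\|_{\mathfrak{s}}\le 2\|x-y\|_{\mathfrak{s}}$ (where one estimates each term separately, gaining $\|x-y\|_{\mathfrak{s}}$ from the remaining room below order $\gamma+2$) and its complement (where a Taylor expansion of the kernel in the base point gains the missing power of $\|x-y\|_{\mathfrak{s}}$) is the standard singular-kernel argument, and is exactly the content referenced in the kernel lemmas \eqref{e:condKernel}.

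Finally, for \eqref{reconstructionintegration} I would argue that both sides are space-time distributions and that $\mathcal{R}\KKg F$ is characterized uniquely by \eqref{reconstructionbound}, so it suffices to check that $K\star\mathcal{R}F$ satisfies that characterizing bound against $\KKg F$, i.e.
\[
\bigl|(K\star\mathcal{R}F)(\CS_z^\lambda\eta) - (\Pi_z\KKg F(z))(\CS_z^\lambda\eta)\bigr| \ls \lambda^{\gamma+2}.
\]
One expands $\Pi_z\KKg F(z)$ using admissibility (Definition \ref{def: admissible}) so that $\Pi_z\symbol{\mathcal{I}}[\cdot]$ produces $K\star(\cdot)$ minus its Taylor jet, and the Taylor-jet subtraction is precisely cancelled by $\Pi_z$ applied to the $\mathcal{J}_\gamma$ and $\Ng$ polynomial parts; the remaining discrepancy is an integral of $K$ against $(\mathcal{R}F-\Pi_zF(\cdot))$ tested at scale $\lambda$, which by the Reconstruction bound and assumption 1. on $K$ is of order $\lambda^{\gamma+2}$. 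The main obstacle, and where I would spend the most care, is the increment estimate in the second paragraph: one must handle the interaction between the $\Gamma_{xy}$-action on the abstract integral symbols and the polynomial recentering of the kernel integrals simultaneously, keeping the bookkeeping of which partial Taylor expansions cancel — this is exactly the place where non-integer $\gamma\notin A$ is essential, since an integer $\gamma$ would make one of the kernel integrals in $\mathcal{J}_\gamma$ or $\Ng$ only logarithmically (rather than absolutely) convergent and break the clean power counting.
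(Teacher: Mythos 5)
The paper states this theorem without proof --- it is a survey presentation of what is \cite[Theorem 5.12]{RegStr} --- so there is no in-paper argument to compare against. Your outline does mirror the broad structure of the original proof: split $\KKg = \symbol{\mathcal{I}} + \mathcal{J}_\gamma + \Ng$, verify the two defining bounds of Definition \ref{modeldistdef} at level $\gamma+2$ componentwise, characterize $\mathcal{R}\KKg F$ via the uniqueness clause in Theorem \ref{ReconstructionTheorem}, and use admissibility (Definition \ref{def: admissible}) to trade the Taylor recentering built into $\Pi_z\symbol{\mathcal{I}}[\cdot]$ against the polynomial output of $\mathcal{J}_\gamma + \Ng$. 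Your closing remark about $\gamma\notin A$ being needed to avoid a logarithmically divergent kernel integral is also correct in spirit.

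There is, however, a genuine gap at the central step. You assert that the estimate $\|\mathcal{R}F(y)-\Pi_zF(y)\|\ls\|z-y\|_{\mathfrak{s}}^\gamma$ ``follows from the Reconstruction Theorem.'' It does not. Theorem \ref{ReconstructionTheorem} controls the pairing $\left(\mathcal{R}F - \Pi_z F(z)\right)(\CS_z^\lambda\eta)$ against a rescaled test function centered at $z$; it gives no pointwise bound, and indeed $\mathcal{R}F$ and $\Pi_z F(z)$ are in general only distributions, so the displayed quantity is not even defined. The underbraces in \eqref{e-def-Ng} are deliberately informal shorthand, and reproducing them verbatim leaves the proof incomplete precisely where it matters. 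The rigorous argument decomposes the kernel as $K=\sum_{n\ge 0}K_n$ with $K_n$ smooth and supported in a parabolic annulus of radius $\sim 2^{-n}$, observes that up to a multiplicative factor $2^{n(d_{\mathfrak{s}}-2+|k|_{\mathfrak{s}})}$ one can write $D^k K_n(z-\cdot)$ as $\CS_z^{2^{-n}}\eta_{n}$ for $\eta_n$ ranging over a bounded family of test functions, applies \eqref{reconstructionbound} with $\lambda\sim 2^{-n}$ to each piece, and resums the resulting geometric series $\sum_n 2^{-n(\gamma+2-|k|_{\mathfrak{s}})}$. This is exactly where $|k|_{\mathfrak{s}}<\gamma+2$ --- and, in the increment bound, $\gamma\notin A$ --- is used. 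The same dyadic device also underlies the near/far split you sketch for the increment bound $\|\KKg F(x)-\Gamma_{xy}\KKg F(y)\|_\beta$; without it, the ``telescoping of kernel integrals'' acts on distributions that cannot literally be evaluated along the diagonal. Supplying the dyadic decomposition and redoing your estimates scale by scale is the essential missing ingredient; once it is in place the rest of your outline assembles correctly.
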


\begin{example}\label{ex:integration}
For $\Phi^4_2$ we had made the ansatz $\Phi(z) = \<s1> + \Phi_{\symbol{\mathbf{1}}} \symbol{\mathbf{1}}$ and we had already seen above that this implies
\begin{align*}
\Phi^3 =  \<s3> + 3  \Phi_{\symbol{\mathbf{1}}} \<s2> +   3 \Phi_{\symbol{\mathbf{1}}}^2 \<s1> + \Phi_{\symbol{\mathbf{1}}}^3 \symbol{\mathbf{1}}\;.
\end{align*}
Let us now give an explicit description of,  $\KKg \symbol{\Xi} -  \KKg \big[  \Phi^3  \big]$ because it is instructive. First of all, we have $\KKg \symbol{\Xi} = \symbol{\mathcal{I}[ \Xi  ]}= \<s1> $.  Indeed, $\symbol{\mathcal {I } [\Xi ]}$ has order $-\kappa < 0$ and therefore, the sum \eqref{e:DefJ} which defines the operator $\symbol{\mathcal{J}}$ is empty. Furthermore,  the fact that  $\Pi_z \Xi $ does not depend on $z$ implies that  $(\mathcal{R}\Xi)(z) = \Pi_z \Xi$ which in turn implies by \eqref{e-def-Ng} that $\Ng \Xi$ vanishes as well. 

\medskip
On the other hand, we  can take $ -  \KKg \Phi^3 = -( \mathcal{J}_\gamma  + \Ng)  \Phi^3$. Indeed, the symbols $\<s1>, \<s2>, \<s3>$ appearing in $\Phi^3$ have order $-\kappa, -2 \kappa, -3 \kappa$ so that  the abstract integration map $\symbol{\mathcal{I}}$ acting on these symbols would produce terms of order $>1$. We do not require a description to such order, so these terms and the corresponding $\symbol{\mathcal{J}}$ can be dropped. We get
\begin{align*}
( \mathcal{J}_\gamma  + \Ng) \big( \Phi^3  \big) =     K \ast  \mathcal{R}  \Big( \<s3> + 3  \Phi_{\symbol{\mathbf{1}}} \<s2> +   3 \Phi_{\symbol{\mathbf{1}}}^2 \<s1> + \Phi_{\symbol{\mathbf{1}}}^3 \symbol{\mathbf{1}} \Big) \; \symbol{\mathbf{1}} \;.
\end{align*}
The reconstruction operator gives $\mathcal{R} \<s3>  = \<3>$, $\mathcal{R} \Phi_{\symbol{\mathbf{1}}} \<s2>  = \Phi_{\symbol{\mathbf{1}}} \<2> $, $\mathcal{R} \Phi_{\symbol{\mathbf{1}}}^2 \<s1>  = \Phi_{\symbol{\mathbf{1}}}^2 \<1>$ and  $\mathcal{R} \Phi_{\symbol{\mathbf{1}}}^3 \symbol{\mathbf{1}} = \Phi_{\symbol{\mathbf{1}}}^3$
\footnote{Actually, the processes $\<1> $, $\<2>$ and $\<3>$ should be constructed with the modified  kernel.}.
Hence, the equation for $\Phi_{\symbol{\mathbf{1}}}$ reduces to the equation for the remainder $v_\delta$ in the Da Prato - Debussche method. In this context, the continuity of the multiplication of modelled distribution, together with the existence and continuity of the reconstruction operator take the role of the multiplicative inequality, Theorem~\ref{multiplication}.  This is actually a general fact - in \cite[Thm 4.14 ]{RegStr} it is shown how Theorem~\ref{multiplication} can be derived as a consequence of these two statements.
\end{example}

\subsection{The fixed point argument}\label{sec:FPA}
We now state a theorem guaranteeing the existence of a modelled distribution which solves the abstract fixed point problem \eqref{FixedPointAbst}. Our discussion will be informal; a precise version of such a theorem, stated in a quite general context, can be found in \cite[Sec 7.3, Theorem 7.8]{RegStr}.  
\medskip

%Recall that one way
We aim  to prove existence of solutions to the dynamic $\Phi^4_3$ equation
%to a concrete PDE is to treat the problem as an ODE in infinite dimensions. Finding the solution to this ODE is posed as a 
by solving a fixed point problem 
%on some Banach space $\mathfrak{B}$ 
in a space $\mathcal{D}^\gamma$ of modelled distributions on $[0,T] \times \R^d$. The tools we developed in the previous sections
will show that the non-linearity is locally Lipschitz continuous and for $T$ small enough we can apply the contraction mapping theorem 
on some ball in $\mathcal{D}^\gamma$.
%
%of functions from $[0,T] \rightarrow \mathcal{B}$ where $[0,T]$ is the time interval the solution will live on and $\mathcal{B}$ is some Banach space of functions on $\R^{d}$ (or some subset of $\R^{d}$). 
%When working with non-linear PDE the assumption that the non-linearity is locally Lipschitz allows one to show that for $T$ small enough the contraction mapping theorem can be applied on some ball in $\mathfrak{B}$. This same approach is used for the abstract fixed point theorem. 

\medskip
At this point it is important to remember  that we have derived uniform bounds on models only locally in space-time. Indeed, going through the proof 
of the Kolmogorov Lemma, Theorem~\ref{Kolmogorov}, the reader can easily convince himself that the constants explode over infinite space-time domains and the 
same phenomenon presents itself in the construction of various models discussed at the beginning of this section. This problem could be circumvented, by introducing 
weights in the norms that measure these models (as has been implemented in \cite{Cyril,JCH2}) but this makes the deterministic analysis more difficult. Here we choose the 
simpler situation and compactify space by assuming that the noise is periodic.

\medskip
Accordingly, we now assume that our space-time white noise $\xi$ is defined on $\T^{d} \times \R$. When convenient we interpret $\xi$ as a distribution on $\R^d \times \R$ which is periodic in space.
We will again lift realizations of the noise to admissible models as before
(see \cite[Section 3.6]{RegStr} for the precise notion of periodicity for models).
For any  ``periodic'' model $(\Pi,\Gamma)$ we define $\mathcal{D}^{\gamma}(\Pi,\Gamma,\Lambda_{T})$ to be the family of modelled distributions $F: \T^{d} \times [0,T] \rightarrow T$ 
which satisfy condition \eqref{modeldistdef}.
We define $\mathcal{M}_{0} \ltimes \mathcal{D}^{\gamma}(\Lambda_{T})$ to be the set of all triples $(\Pi,\Gamma,F)$ with $(\Pi,\Gamma) \in \mathcal{M}_{0}$ ``periodic'' and $F \in \mathcal{D}^{\gamma}(\Pi,\Gamma,\Lambda_{T})$. As before it will be important that this space can be equipped with a metric which behaves well with the machinery of the theory of regularity structures. With all of this notation in hand we can now state the following theorem.

\begin{theorem}\label{thm:FPT}
Let $\mathcal{T}$ be a regularity structure for $\Phi^4_3$, where the corresponding list of symbols $\mathfrak{T}$ was defined with a choice of $\gamma > 1 + 3 \kappa$.  Then for any admissible model $(\Pi,\Gamma)$ there exists a strictly positive $T >0$ and a unique modelled distribution $\Phi \in \mathcal{D}^\gamma(\Pi,\Gamma,\Lambda_{T})$ that solves the fixed point equation
\begin{align}\label{FixedPointAbst}
\Phi = - \KKg \Phi^3 + \KKg \symbol{\Xi} \;.
\end{align} 

Additionally, $T$ is lower semicontinuous in $(\Pi,\Gamma)$. Furthermore, if for some $(\bar{\Pi},\bar{\Gamma})$ we have  \rf{$T \left[ (\bar{\Pi},\bar{\Gamma}) \right]>t $} then \refcomment{REF22} $
\Phi : (\Pi, \Gamma) \mapsto (\Pi,\Gamma,\Phi) \in  \mathcal{M}_{0} \ltimes \mathcal{D}^\gamma(\Lambda_{t})$ is continuous in a neighbourhood of $(\bar{\Pi},\bar{\Gamma})$. 
\end{theorem}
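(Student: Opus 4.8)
The plan is to set this up as a contraction mapping argument in the space $\mathcal{D}^\gamma(\Pi,\Gamma,\Lambda_T)$, exactly parallel to the classical Picard argument behind \eqref{e:Picard2}, but now carried out on the level of modelled distributions using the three structural maps established above: reconstruction (Theorem~\ref{ReconstructionTheorem}), multiplication (Theorem~\ref{thm:abstractSchaud}), and integration (the theorem containing \eqref{reconstructionintegration}). First I would check that the right-hand side of \eqref{FixedPointAbst} maps a suitable ball of $\mathcal{D}^\gamma$ into itself: starting from $\Phi \in \mathcal{D}^\gamma_\alpha$ with $\alpha = -\tfrac12 - \kappa$ (the homogeneity of $\<s1>$), Theorem~\ref{thm:abstractSchaud} gives $\Phi^3 \in \mathcal{D}^{\gamma - 2\alpha}_{3\alpha}$, and then $\KKg$ maps this into $\mathcal{D}^{\gamma - 2\alpha + 2}_{(3\alpha + 2)\wedge 0}$; since $\gamma > 1 + 3\kappa$ one has $\gamma - 2\alpha + 2 > \gamma$, so there is genuine room, and $\KKg\symbol{\Xi} = \<s1> + (\text{higher order})$ contributes the forced leading coefficient $1$ in front of $\<s1>$. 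One must also incorporate the initial condition (here taken to be $0$, or handled via the harmonic extension term as in \cite[Sec.~7.3]{RegStr}), and exploit the short-time smalling: the relevant estimates on $\KKg$ restricted to $\Lambda_T$ carry a positive power of $T$ in front of the Lipschitz constant of $\Phi \mapsto \Phi^3$, which is what yields a contraction for $T$ small. This gives existence and uniqueness of the fixed point $\Phi \in \mathcal{D}^\gamma(\Pi,\Gamma,\Lambda_T)$ for $T = T[(\Pi,\Gamma)]$.

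Next I would address the continuity and lower semicontinuity statements, which are really statements about how the fixed point depends jointly on the model. The key input here is that all three operations — $\mathcal{R}$, the product, and $\KKg$ — are not merely bounded for a fixed model but \emph{continuous} as maps on $\mathcal{M}_0 \ltimes \mathcal{D}^\gamma$, i.e.\ jointly in the model and the modelled distribution; this is Theorem~\ref{continuityofrecon} for reconstruction, the $\mathfrak{K}$-uniform bilinear estimate of Theorem~\ref{thm:abstractSchaud} (which upgrades to a local Lipschitz bound in the model), and the analogous continuity for $\KKg$. Given a reference model $(\bar\Pi,\bar\Gamma)$ with $T[(\bar\Pi,\bar\Gamma)] > t$, the solution map on $\Lambda_t$ is obtained by composing these continuous operations, so a standard uniform-contraction-with-parameters argument (the Banach fixed point theorem with a parameter) shows the fixed point $\Phi$ depends continuously on $(\Pi,\Gamma)$ in a $\mathcal{M}_0 \ltimes \mathcal{D}^\gamma(\Lambda_t)$-neighbourhood of $(\bar\Pi,\bar\Gamma)$: the contraction constant and the radius of the invariant ball can be chosen uniformly over such a neighbourhood because the relevant seminorms $\|(\Pi,\Gamma)\|_{\mathfrak{K}}$ vary continuously. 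Lower semicontinuity of $T$ then follows: if the solution exists up to time $t$ for $(\bar\Pi,\bar\Gamma)$, the same uniform estimates show it exists up to time $t$ for all nearby models, hence $\{(\Pi,\Gamma) : T[(\Pi,\Gamma)] > t\}$ is open.

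The main obstacle — the place where genuine care is needed rather than routine bookkeeping — is twofold. First, one must verify that the restriction and gluing of modelled distributions to the time slab $[0,T]$ interact correctly with the integration map $\KKg$, in particular that $\KKg$ applied to a modelled distribution supported on $\Lambda_T$ again lives in $\mathcal{D}^{\gamma+2}$ on $\Lambda_T$ with the $T$-dependent bound needed for the contraction; this is where the non-anticipativity of the (modified, compactly supported) kernel $K$ in property 2.) above is essential, and where the precise form of the solution space with its behaviour near $t = 0$ matters. Second, and more subtly, the form of the ansatz \eqref{Phi43ansatz} — in particular the forced relation $\Phi_{\<s20>} = -3\Phi_{\symbol{\mathbf{1}}}$ and the fact that only the listed symbols appear — must be shown to be \emph{preserved} by the fixed point map, i.e.\ one checks that $-\KKg\Phi^3 + \KKg\symbol{\Xi}$, when $\Phi$ has the form \eqref{Phi43ansatz}, again has that form; this is a finite but somewhat delicate computation matching the coefficient of each relevant symbol, using \eqref{Phicubed} together with the explicit action of $\symbol{\mathcal{I}}$ and $\mathcal{J}_\gamma$ on each term, and it is what justifies that solving the abstract equation reduces to solving a closed system for the two functions $\Phi_{\symbol{\mathbf{1}}}$ and $\Phi_{\symbol{\mathbf{X}}}$. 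Once these two points are in place, the abstract fixed point theorem of \cite[Theorem~7.8]{RegStr} applies essentially verbatim, and I would simply invoke it.
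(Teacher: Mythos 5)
Your proposal follows essentially the same route as the paper: set up a contraction in $\mathcal{D}^\gamma(\Pi,\Gamma,\Lambda_T)$ using the continuity of multiplication (Theorem~\ref{thm:abstractSchaud}) and of $\KKg$, extract a small Lipschitz constant from the gain of order under integration combined with localization to a short time slab, and then invoke the abstract fixed point theorem of \cite[Thm.~7.8]{RegStr}, with continuity in the model and lower semicontinuity of $T$ coming from the uniform-contraction-with-parameters argument via $\mathcal{M}_0\ltimes\mathcal{D}^\gamma$. One small caveat: what you flag as a ``main obstacle'' — showing the fixed point map preserves the ansatz \eqref{Phi43ansatz}, in particular $\Phi_{\<s20>}=-3\Phi_{\symbol{\mathbf{1}}}$ — is not actually needed to prove the theorem as stated (existence/uniqueness of a fixed point in $\mathcal{D}^\gamma$); in the paper this structure is deduced \emph{a posteriori} from the Picard iteration, after the fixed point is known to exist.
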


We make a few remarks about the contraction mapping argument used for the above theorem. The continuity of the mapping $\Phi \mapsto \Phi^3$ and the integration operator $\KKg$ immediately imply that the mapping $\Phi \mapsto - \KKg \Phi^3 + \KKg \symbol{\Xi} $ is Lipschitz continuous on every ball of $\mathcal{D}^\gamma(\Lambda_t)$. The Lipschitz constant can be made arbitrarily small by choosing a slightly smaller $\gamma$ (which for $t < 1$ produces a small power of $t$ in front of the bounds) and then choosing $T$ small enough. 

Similar arguments yield the essential continuity statement promised by the last sentence of Theorem \ref{thm:FPT}.

\begin{remark}
Up to now we have always assumed that the initial data for our fixed point problem is zero. This is quite unsatisfactory for the solution theory, because it prevents us from \emph{restarting} the solution at $T$ to obtain a maximal solution. The theory of \cite{RegStr} does allow for the restarting of solutions but one must work with larger classes of modelled distributions. Since initial condition will typically not have a nice local description the theory introduces spaces of ``singular'' modelled distributions where the given local expansions are allowed to blow up near the time zero hyperplane (see \cite[Section 6]{RegStr}). 
\end{remark}
We now continue our discussion but will suppress the fact that we are actually working in a spatially periodic setting and finite time horizon.
The particular form of the modelled distribution $\Phi \in \mathcal{D}^{\gamma}$ which solves the abstract fixed point problem can now be deduced by running a few steps of a Picard iteration. We run through this computation now which will end up justifying the ansatz \eqref{e:ansatzPhi}. 
We start the iteration by setting $\Phi_{0}(z) = 0$. Applying the map $\Phi \mapsto - \KKg \Phi^3 + \KKg \symbol{\Xi} $  to $\Phi_0$ gives
\begin{equation*}
\Phi_{1}(z)  = \KKg \Xi = \symbol{\mathcal{I} \Xi} = \<s1>\;.
\end{equation*}
\footnote{Note that the $\Phi_1$ is different from $\Phi_{\symbol{\bf{1}}}$ with a blue bold subscript.   }
Here we used that $\symbol{\mathcal{J} \Xi} = \Ng \symbol{\Xi} =0$ as was explained in Example~\ref{ex:integration}. Applying this map again then gives
\begin{equation*}
\Phi_{2}(z)   = \<s1> - \left[\KKg (\Phi_{1})^3\right](z) =  \<s1> - ( \symbol{\mathcal{I}} + \symbol{\mathcal{J}} + \Ng) \<s3> = \<s1>  - \<s30> -  \<30> (z) \symbol{\mathbf{1}} \;. 
\end{equation*}
Observe that the appearance of the positive order symbol $\<s30>$ automatically produces the first ``polynomial" (the symbol $\symbol{\mathbf{1}}$). Here the notation $\<30> (\bullet)$ (which is colored black) refers to the corresponding concrete space-time distribution which was introduced in Section \ref{dpdsec}.
In going to our expression $\Phi_{2}$ we used that $\Ng \<s3> = 0$, this is because $\Pi_z \<s3>$ does not depend on $z$ and hence $\mathcal{R} \<s3> = \Pi_z \<s3>$.

\medskip
Going one step further in the Picard iteration gives
\begin{align*}
\Phi_{3}(z)   &= \<s1> - \left[ \KKg (\Phi_{2})^3 \right](z)  \\
&= \<1> - \<s30> -  \<30> (z) \symbol{\mathbf{1}}  +  \KKg \Big( 3\<s32>     +3\<30> (z)\<s2>   -  6\<30> (z) \<s31> - 3 \<30>  (z)^2 \<s1>    \Big) \;.
\end{align*}
Here we have dropped  all terms of order $>0$ under the operator $\KKg$ because we do not need them. Indeed, the two requirements that determine the degree to which we have to expand each quantity are: 
\begin{itemize}
\item The solution $\Phi$ should solve a fixed point problem in $\mathcal{D}^{\gamma}_\alpha$ for $\gamma \approx 1 + 3\kappa$ and $\alpha = -\frac12 - \kappa$. Therefore, we need to keep all symbols with order less than or equal to $\gamma$ in the expansion $\Phi$.
\item Below we will apply the reconstruction operator to the right hand side of the fixed point problem in order to identify the equation the reconstruction of $\Phi$ solves. In order to be able to do that we need to ensure that quantities under the integral operator are described to strictly positive order.
\end{itemize}

\medskip
It is now clear the fixed point $\Phi \in \mathcal{D}^{\gamma}$ for the map $\Phi \mapsto - \KKg \Phi^3 + \KKg \symbol{\Xi} $ will have the property that the symbols $\<s1>$ and $\<s30>$ enter with $z$-independent coefficients. Indeed, both symbols only ever arise after integrating the terms $\symbol{\Xi}$ and ${\<s3>}$ from the previous step, both of which cannot have a non-constant prefactor. Furthermore, it is clear why the pre-factor of $\<s20>$ has to be $\Phi_{\symbol{\mathbf{1}}}$. Indeed, this symbol only arises after applying $\symbol{\mathcal{I}}$ to $\<s2>$ which in turn only appears from the multiplication of $\<s2> $ with $\Phi_{\symbol{\mathbf{1}}}\symbol{\mathbf{1}}$.

\medskip

As mentioned above, we will now apply the reconstruction operators to $\Phi$ to get concrete space-time distributions and then show that these objects satisfy certain concrete PDE.
For $\delta > 0$ let $(\Pi^{\delta},\Gamma^{\delta})$ be the canonical model built from the smoothed noise $\xi_{\delta}$. Let $\mathcal{R}^{\delta}$ be the associated reconstruction operator on the $\mathcal{D}^{\gamma}$ space built from the canonical model with $\gamma$ slightly larger than $1$ as in Theorem~\ref{thm:FPT}. We denote by $\Phi_{\delta}$ the modelled distribution which is the solution to the corresponding abstract fixed point problem. It follows that  
\begin{equation*}
\begin{split}
\left(\mathcal{R}^{\delta}\Phi_{\delta}\right)(z)
=&
\mathcal{R}^{\delta}
\left[ \mathcal{K} \left( \symbol{\Xi} - \Phi_{\delta}^{3} \right) \right](z)\\
=&
\int  dy\ 
K(z - y)
\left[
\mathcal{R}^{\delta}
\left( \symbol{\Xi} - \Phi_{\delta}^{3} \right)
\right](y)\\
=&
\int  dy\ 
K(z - y)
\left[ \left(\Pi^{\delta}_{y}\symbol{\Xi}\right)(y) 
- 
\left(\Pi^{\delta}_{y}\Phi_{\delta}^{3}(y)
\right)(y) \right]\\
=&
\int  dy\ 
K(z - y)
\left[ \left(\Pi^{\delta}_{y}\symbol{\Xi}\right)(y) - \left( \Pi^{\delta}_{y}\Phi_{\delta}(y) \right)^{3}(y) \right]\\
=&
\int dy\ 
K(z - y)
\left[ \xi_{\delta}(y) -  \left(\mathcal{R}^{\delta}\Phi_{\delta}\right)^{3}(y) \right].\\
\end{split}
\end{equation*}

The first equality above is just the fixed point relation. The second equality is \eqref{reconstructionintegration}, the third is \eqref{reconstructionidentity}, the fourth is a consequence of the product property of the canonical model, and in the final equality we again use \eqref{reconstructionidentity}.  It follows that $\mathcal{R}^{\delta}\Phi_{\delta}$ is the mild solution to the equation
\[
\partial_{t} \phi_{\delta} = \Delta \phi_{\delta} - \phi_{\delta}^{3} + \xi_{\delta}.
\] 

Now let $(\hat{\Pi}^{\delta},\hat{\Gamma}^{\delta})$ be the renormalized model we introduced earlier, and let $\hat{\mathcal{R}}^{\delta}$ be the reconstruction operator on the associated $\hat{\mathcal{D}}^{\gamma}$ space (with $\gamma$ as before). Let $\hat{\Phi}_{\delta}$ be the solution to the abstract fixed point problem with model $(\hat{\Pi}^{\delta},\hat{\Gamma}^{\delta})$. We then have

\begin{equation}\label{renormalizedeqncomp}
\begin{split}
\left(\hat{\mathcal{R}}^{\delta}\hat{\Phi}_{\delta}\right)(z)
=&
\int  dy\ 
K(z - y)
\left[ \left(\hat{\Pi}^{\delta}_{y}\symbol{\Xi}\right)(y) 
- 
\left(\hat{\Pi}^{\delta}_{y}\hat{\Phi}_{\delta}^{3}(y)
\right)(y) \right]\\
=&
\int  dy\ 
K(z - y)
\left[ \xi_{\delta}(y) - \left(\Pi^{\delta}_{y}M\hat{\Phi}_{\delta}^{3}(y)
\right)(y)  \right].
\end{split}
\end{equation}
In going to the last line we have used \eqref{renormident}.

\medskip
We know that $\hat{\Phi}_{\delta}$ has an expansion of the form \eqref{Phi43ansatz} where the spatially varying coefficients appearing in the expansion are unknown. Since $\left(\Pi^{\delta}_{y} \tau \right)(y) = 0$ for any homogenous $\tau$ with $|\tau| > 0$ we can replace $M^{\delta}\hat{\Phi}_{\delta}^{3}(y)$ with something equivalent modulo symbols of strictly positive order. Denoting this approximate equivalence by $\approx$, the reader is encouraged to apply $M^{\delta}$ to the formula \eqref{Phicubed} to see that

\begin{equation*}
M^{\delta}
\hat{\Phi}_{\delta}^{3}(y) 
\approx
(M^{\delta} \hat{\Phi}_{\delta}(y))^{3}
-
(3C_{\delta} + 9 \tilde{C}_{\delta})
M^{\delta}\hat{\Phi}_{\delta}(y).
\end{equation*}

Applying the canonical model $\Pi^{\delta}_{y}$ to both sides then gives

\begin{equation*}
\begin{split}
\left(\Pi^{\delta}_{y}M\hat{\Phi}_{\delta}^{3}(y)
\right)(y)
&=\ 
\left(
\Pi^{\delta}_{y}
(M^{\delta} \hat{\Phi}_{\delta}(y))^{3}
\right)(y)
-
(3C_{\delta} + 9 \tilde{C}_{\delta})
\left(
\Pi^{\delta}_{y}
M^{\delta}\hat{\Phi}_{\delta}(y)
\right)
(y)\\
&=\ 
\left(\Pi^{\delta}_{y} M^{\delta} \hat{\Phi}_{\delta}(y)
\right)^{3}
(y)
-
(3C_{\delta} + 9 \tilde{C}_{\delta})
\left(
\Pi^{\delta}_{y}
M^{\delta}\hat{\Phi}_{\delta}(y)
\right)
(y)\\
&=\ 
\left(\hat{\Pi}^{\delta}_{y} \hat{\Phi}_{\delta}(y)
\right)^{3}
(y)
-
(3C_{\delta} + 9 \tilde{C}_{\delta})
\left(
\hat{\Pi}^{\delta}_{y}
\hat{\Phi}_{\delta}(y)
\right)
(y)\\
&=\ 
\left( \hat{\mathcal{R}}^{\delta} 
\hat{\Phi}_{\delta}
\right)^{3}
(y)
-
(3C_{\delta} + 9 \tilde{C}_{\delta})
\left(\hat{\mathcal{R}}^{\delta} 
\hat{\Phi}_{\delta}
\right)
(y) \;.
\end{split}
\end{equation*}

Inserting this into \eqref{renormalizedeqncomp} immediately yields that $\left(\hat{\mathcal{R}}^{\delta}\hat{\Phi}_{\delta}\right)(z)$ is the mild solution to the PDE
\begin{equation}\label{renormalizedequation0}
\partial_{t} \phi_{\delta} = \Delta \phi_{\delta} - \phi_{\delta}^{3} + (3C_{\delta} + 9\tilde{C}_{\delta})\phi_{\delta} + \xi_{\delta}.
\end{equation}
We now take advantage of the fact that all of the abstract machinery introduced in this lecture has good continuity properties with respect to the convergence of models. If the models $(\hat{\Pi}^{\delta},\hat{\Gamma}^{\delta})$ converge in probability to a limiting model $(\hat{\Pi},\hat{\Gamma})$ as $\delta \downarrow 0$ then from Theorem~\ref{thm:FPT}  it follows that the triples $( \hat{\Pi}^{\delta},\hat{\Gamma}^{\delta}, \hat{\Phi}_{\delta})$, viewed as random elements of the space $\mathcal{M} \ltimes \mathcal{D}^{\gamma}$, converge in probability to a limiting triple $(\hat{\Pi},\hat{\Gamma}, \hat{\Phi})$ as $\delta \downarrow 0$. 

\medskip
Theorem \ref{continuityofrecon} then implies that the solutions of \eqref{renormalizedequation0}, given by
$\hat{\mathcal{R}}^{\delta}\hat{\Phi}^{\delta}$, converge in probability to a limiting space-time distribution we will call $\phi$. Here convergence in probability takes place on the space $\Cs^{\alpha}$. While one may not be able to write down an explicit SPDE that the $\phi$ satisfies, we can say $\phi$ solves the abstract formulation of the given SPDE since $\phi = \tilde{\mathcal{R}}\left[(\hat{\Pi},\hat{\Gamma},\hat{\Phi})\right]$ and the triple $(\hat{\Pi},\hat{\Gamma},\hat{\Phi})$ is a solution to our abstract fixed point problem.

\appendix

\section{A primer on iterated stochastic integrals}
\label{s:AppA}
In this appendix we collect  some facts about iterated stochastic integrals used in Lecture~\ref{s:l2}. Our discussion is brief and somewhat formal - a detailed exposition can be found in \cite[Chapter 1]{Nualart}. Throughout the appendix we adopt a slightly more general framework than in Lecture~\ref{s:l2} and replace $\R \times \R^d$ or $\R \times \T^d$  by an arbitrary measure space $(E,\mathcal{E})$ endowed with a  sigma-finite non-atomic measure $\mu$.  Extending the definition presented in Section \ref{ss:whiteNoise} a  \emph{white noise} is then defined as a centred Gaussian family of  random variables $(\xi,\varphi)$ indexed by $\varphi \in L^2(E,\mu)$ which satisfy
\begin{equation}\label{e:WNCovariance}
\E (\xi,\varphi_1) (\xi,\varphi_2) = \int_E \varphi_1 (z) \, \varphi_2(z) \, \mu(dz) \;.
\end{equation}
It is particularly interesting to  evaluate $\xi$ at indicator functions $\mathbf{1}_A$ of measurable sets in $A \in \mathcal{E}$ with $\mu(A)<\infty$ and we write $\xi(A) $ as a shorthand for $\xi(\mathbf{1}_A)$. The following properties follow
\begin{itemize}
\item $\E \xi(A) = 0$ and $\E \xi(A)^2 = \mu(A)$.
\item If $A_1$ and $A_2$ are disjoint, then $\xi(A_1)$ and $\xi(A_2)$ are independent.
\item If $(A_j)_{j \in \N} \ldots$ are pairwise disjoint and of finite measure, then $\xi(\cup_j A_j) = \sum_j \xi(A_j)$, where the convergence holds in $L^2(\Omega, \P)$.
\end{itemize}
Although the last identity suggests to interpret $A \mapsto \xi(A)$ as a random signed measure, it is important to note that in general the $\xi$ does not have a modification as a random signed measure (cf. the regularity discussion in Besov spaces above).

\medskip
We now discuss, how to construct an iterated stochastic integrals of the type ``$ \int_{E^n } f(z_1, z_2, \ldots , z_n) \xi(dz_1)  \ldots \xi(dz_n)$'' for $f \in L^2( E^n  , \mu^{ \otimes n})$. For simplicity we will only treat the case $n=2$, the general case of $n$-fold iterated integrals following in a similar way. In this case, we will call \emph{elementary} any $f \colon E \times E \to \R$ of the form
\begin{equation*}
f = \sum_{\substack{j,k=1 \\ j \neq k}}^N \alpha_{j,k} \mathbf{1}_{A_j \times A_k} \;,
\end{equation*}
for pairwise disjoint sets $A_1, \ldots, A_N$ with finite measure and real coefficients $\alpha_{j,k}$. Note that such a function $f$ is necessarily zero on the diagonal $f(z,z) = 0$ for $z \in E$. We define for such an $f$
\begin{align*}
\int_{E \times E} f(z_1, z_2) \xi(dz_1) \, \xi(dz_2) = \sum_{\substack{j,k=1 \\ j \neq k}}^N \alpha_{j,k} \xi(A_j) \xi(A_k)\;.
\end{align*}
We then get the following identity which resembles the It\^o isometry:
\begin{align} \label{e:ItoIso}
\E \Big(& \int_{E \times E} f(z_1, z_2) \xi(dz_1) \, \xi(dz_2) \Big)^2 \notag\\
& = \sum_{j_1< k_1} \sum_{j_2 <k_2} (\alpha_{j_1,k_1} + \alpha_{k_1, j_1}) (\alpha_{j_2,k_2} + \alpha_{k_2, j_2})  \;\E(\xi(A_{j_1}) \xi(A_{k_1} ) \xi(A_{j_2}) \xi(A_{k_2} )  ) \notag\\
&= \sum_{j<k}  (\alpha_{j,k} + \alpha_{k, j})^2 \mu(A_{k}) \mu(A_j) \notag \\
& =  2 \int_{E \times E}  \big( \frac{1}{2} \big( f(z_1,z_2) + f(z_2,z_1) \big) \big)^2 \mu(dz_1) \mu(dz_2) \notag \\
&\leq 2 \int_{E \times E}  \big( f(z_1,z_2) \big)^2 \mu(dz_1) \mu(dz_2)  \;.
\end{align}
Note that we have crucially used the fact that no ``diagonal terms'' appear when passing from the second to the third line. It is relatively easy to show that the elementary functions are dense in $L^2(E\times E , \mu \otimes \mu)$ (due to the off-diagonal assumption this is only true for non-atomic measures) and hence we can extend the definition of $ \int_{E \times E} f(z_1, z_2) \xi(dz_1) \, \xi(dz_2)$ to all of $L^2(E\times E , \mu \otimes \mu)$. 

\medskip

However, the fact that we have defined the iterated integral as a limit of approximations that ``cut out'' the diagonal has an effect, when treating non-linear functions of iterated stochastic integrals.  Formally, for $f \in L^2(E, \mu)$ one might expect the identity 
\begin{align}\label{e:Wrong1}
\Big( \int_E f(z) \xi(dz) \Big)^2 =  \int_{E \times E}  f(z_1)f(z_2) \xi(dz_1) \xi(dz_2) \;,  
\end{align}
which ``follows'' by formally expanding the integral. But at this point it becomes relevant that as mentioned above $\xi$ is typically \emph{not} a random measure, so that this operation is not admissible.  In order to get the right answer, we have to approximate $f$ by simple functions $f \approx \sum_{j=1}^N \alpha_j A_j$. Mimicking the construction of the iterated integrals above we write
\begin{align*}
\Big( \int f(z) \xi(dz) \Big)^2 \approx \sum_{j \neq k}   \alpha_j \alpha_k  \xi(A_j) \xi(A_k)  + \sum_{j} \alpha_j^2 \xi(A_j)^2 \;.
\end{align*}
As expected, the first sum involving only off-diagonal entries will converge to $\int f(z_1) f(z_2) \xi(dz_1) \xi(dz_2)$ as the partition $(A_j)$ gets finer. However, differing from the case where $\xi$ is a measure, the ``diagonal'' term does not vanish in the limit. Indeed, its expectation is given by 
\begin{align*}
\sum_j \alpha_j^2 \; \E \xi(A_j)^2 = \sum_j \alpha_j^2 \; \mu(A_j) \approx \int_E f(z) \mu (dz)  \;,
\end{align*}
but the variance of this term will go to zero as the partition gets finer. 
This suggests that instead of \eqref{e:Wrong1} we should get
\begin{equation}\label{e:True}
\Big( \int_E f(z) \xi(dz) \Big)^2 =  \int_{E \times E}  f(z_1)f(z_2) \xi(dz_1) \xi(dz_2) + \int_E f(z)^2 \, dz \;,
\end{equation}
and this formula is indeed true. 
\begin{remark}
In the one-dimensional case, i.e. when $E=\R$ and $\mu$ is the Lebesgue measure our construction yields iterated \emph{It\^o} integrals. In the case where $f = \mathbf{1}_{[0,t]}$ the formula \eqref{e:True} reduces to 
\begin{align*}
 \Big(\int \mathbf{1}_{[0,t]}(s) \xi(ds) \Big)^2  =  \int_{\R \times \R}  \mathbf{1}_{[0,t]}(s_1) \mathbf{1}_{[0,t]}(s_2) \; \xi(ds_2) \; \xi(ds_1) + t
\end{align*}
which in the more common notation of stochastic calculus reduces to the It\^o formula
\begin{align*}
W_t^2 = 2 \int_0^t \big(\int_0^{s_1} dW_{s_2}\big) \;dW_{s_1} + t \;.
\end{align*}
\end{remark}
\begin{remark}
The generalisation to iterated integrals of arbitrary order follows a similar pattern. We leave it as an exercise to the reader to convince himself that  for $n=3$ formula \eqref{e:True} becomes
\begin{align}\label{e:True11}
\Big( \int_E f(z) \xi(dz) \Big)^3 =&  \int_{E \times E}  f(z_1)f(z_2)f(z_3)  \xi(dz_1) \xi(dz_2) \xi(dz_3) \notag\\
& \quad  + 3 \int_E f(z)^2 \, dz  \int_E f(z) \xi(dz) \;.
\end{align}
For larger $n$ such identities are expressed most conveniently in terms of \emph{Hermite} polynomials $H_n$. For example, one gets
\begin{align*}
H_n\big(\int f(z) \xi(dz), \| f\|_{L^2(\mu)} \big) = \int \ldots \int \prod_{j=1}^n f(z_j) \xi(dz_1) \ldots \xi(dz_n) \;,
\end{align*}
where $H_0(Z, \sigma)=1, H_1(Z,\sigma) =Z$, $H_2(Z,\sigma) = Z^2- \sigma^2$, $H_3(Z,\sigma) = Z^3 - 3\sigma^2 Z$ etc.
(see \cite[Proposition 1.1.4]{Nualart}).
\end{remark}
\medskip 
At this point almost all the tools we need in the analysis of non-linear functionals of  Gaussian processes are in place. For example, we use expression \eqref{e:True11} to decompose $\<1>_\delta^3$ into two parts, the variances of each can be evaluated explicitly by the $L^2$ isometry \eqref{e:ItoIso}. However,  if we wanted to feed these bounds directly into  the Kolmogorov Theorem \ref{Kolmogorov} we would lose too much regularity. As in the case of white noise, above we have to replace the $L^2$-type bounds by $L^p$ bounds for $p$ large enough. In the Gaussian case we used the fact that for centred Gaussian random variables all moments are controlled by the variance. Fortunately iterated stochastic integrals satisfy a similar property. This is the content of the famous \emph{Nelson estimate} which states in our context that 
\begin{align}
\E\Big( & \int f(z_1, \ldots, z_n) \xi(dz_1) \ldots \xi(dz_n) \Big)^p \notag\\
&\qquad \leq C_{n,p} \Big( \int f(z_1, \ldots, z_n)^2 \mu(dz_1) \ldots \mu(dz_n) \Big)^{\frac{p}{2}}\;. \label{e:Nelson}
\end{align}
This estimate is an immediate consequence of the hypercontractivity of the Ornstein-Uhlenbeck semigroup, see \cite[Thm 1.4.1]{Nualart}.

%

%=======================
% bibliography
%=======================
\bibliography{refs}
\bibliographystyle{abbrv}
\end{document}